\documentclass[11pt]{article}
\usepackage[a4paper,margin=2.6cm]{geometry}
\usepackage[utf8]{inputenc}
\usepackage{amsmath,amssymb,amsthm}
\usepackage{enumitem}
\usepackage{needspace}
\usepackage[hidelinks]{hyperref}
\hypersetup{pdftitle={The upper semi-Fredholm radius and subprojectivity: lifting and quotient obstructions},pdfauthor={Abdelhalim Azzouz}}

\newtheorem{theorem}{Theorem}[section]
\newtheorem{proposition}[theorem]{Proposition}
\newtheorem{lemma}[theorem]{Lemma}
\newtheorem{corollary}[theorem]{Corollary}
\newtheorem*{theoremA}{Theorem A}
\newtheorem*{theoremB}{Theorem B}
\newtheorem*{theoremC}{Theorem C}

\theoremstyle{definition}
\newtheorem{remark}[theorem]{Remark}
\newtheorem{problem}[theorem]{Problem}
\numberwithin{equation}{section}

\newcommand{\cL}{\mathcal L}
\newcommand{\inn}{\operatorname{in}}
\newcommand{\Ran}{\operatorname{Ran}}
\newcommand{\dist}{\operatorname{dist}}
\newcommand{\ext}{\operatorname{ext}}
\newcommand{\LSF}{\Lambda_{\mathrm{SF}}^{\mathrm{all}}}
\newcommand{\LUSP}{\Lambda_{\mathrm{usp}}}
\newcommand{\StrictS}{\mathcal{SS}}
\newcommand{\sub}{\mathcal S}

\title{The upper semi-Fredholm radius and subprojectivity:\\ lifting and quotient obstructions}
\author{Abdelhalim Azzouz\thanks{Department of Mathematics, Faculty of Science, University Salhi Ahmed, Naama, Algeria. E-mail: \texttt{abdelhalim.azzouz.cus@gmail.com}}}
\date{}

\begin{document}
\maketitle

\begin{abstract}
For a bounded operator $T:X\to Y$, let $d_+(T)$ be its distance to the operators that are not upper semi-Fredholm, and let $\inn(T)=\inf_M\|T|_M\|$ over closed infinite-dimensional subspaces. We study the optimal constant $\LSF(X)$ in $d_+\le C\inn$ for a fixed domain and all targets. A quantitative lifting obstruction yields a complete characterization on $\mathcal L_1$-spaces: finite comparison, uniform subprojectivity, subprojectivity, and absence of infinite-dimensional reflexive subspaces are equivalent. The optimal comparison and uniform subprojectivity constants coincide, including the value $\infty$.

On hereditarily indecomposable domains, we characterize the comparison constant by bounded approximation of quotient maps through strictly singular endomorphisms, with an exact dual formula over $\mathbb C$. If strictly singular endomorphisms are strictly cosingular, identities into quotient renormings have $\inn=\varepsilon$ and $d_+=1$. This applies to the original Gowers--Maurey space. Compact products of strictly singular operators, or reflexivity of their ideal modulo compact operators, give further obstructions, covering Argyros--Haydon and Argyros--Motakis spaces. These ratios diverge even for target norms with distortion one. The constant also passes to complemented subspaces without a projection-norm factor. The proofs use an extension formula and reduction to equivalent target norms. The converse for arbitrary domains remains open.
\end{abstract}

\noindent\textbf{Keywords:} semi-Fredholm operator; perturbation radius; operational quantity; uniform subprojectivity; lifting property; $\mathcal L_1$-space.\\
\textbf{MSC 2020:} 47A53, 47A55, 46B20, 46B03, 46B08.

\section{Introduction}

A bounded operator with closed range and finite-dimensional kernel remains upper semi-Fredholm under sufficiently small perturbations. How large can such a perturbation be? For $T:X\to Y$, this is measured by
\[
 d_+^{X,Y}(T)=\dist\bigl(T,\cL(X,Y)\setminus\Phi_+(X,Y)\bigr),
\]
where $\Phi_+(X,Y)$ denotes the upper semi-Fredholm operators. This radius, studied by Zem\'anek \cite{Zem84,Zem85}, records a stability property that is usually difficult to compute. Schechter's quantity
\[
 \inn(T)=\inf_{M\in\sub(X)}\|T|_M\|,
\]
where $\sub(X)$ consists of the closed infinite-dimensional subspaces of $X$, offers a more accessible description: it measures how small $T$ can become on an infinite-dimensional subspace \cite{Sch72}. Both quantities vanish precisely outside $\Phi_+$, and $\inn(T)\le d_+(T)$. Their common zero set nevertheless conceals a substantial quantitative difference.

Gonz\'alez and Martin\'on classified the operational quantities associated with semi-Fredholm operators in \cite{GM95}. They subsequently proved that none of the quantities in that classification is equivalent to the corresponding perturbation radius \cite{GM12}. Their upper semi-Fredholm counterexamples use products of hereditarily indecomposable spaces; the construction even gives invertible operators on one fixed complex Banach space for which $\inn/d_+\to0$ \cite[Theorem 3.2 and Proposition 3.4]{GM12}. The same paper gives equality for Hilbert domains and comparison for injective targets \cite[Proposition 3.5]{GM12}. Thus the remaining question concerns the geometry of a fixed domain for which comparison holds uniformly over all targets. Related all-target comparisons of operational quantities characterize the bounded compact approximation property in \cite{CR23}.

We write
\[
 \LSF(X)=\inf\{C\ge1:\ d_+^{X,Y}(T)\le C\inn(T)
 \text{ for every }Y\text{ and every }T\in\cL(X,Y)\}.
\]
A Banach space is subprojective if each of its closed infinite-dimensional subspaces contains a further infinite-dimensional subspace complemented in the ambient space \cite{Whi64}. The uniform version requires a common bound for the projection norms \cite{OiSp15}; we denote its optimal constant by $\LUSP(X)$. Projections immediately give $\LSF(X)\le\LUSP(X)$ (Proposition~\ref{prop:subproj}). We ask whether they are also necessary.

\begin{problem}\label{prob:converse}
If $\LSF(X)<\infty$, must $X$ be uniformly subprojective?
\end{problem}

To see the difficulty, consider a restriction $T|_M$ of small norm. An extension $E:X\to Y$ with $E|_M=T|_M$ makes $T-E$ vanish on $M$ and therefore bounds the radius. Conversely, approximation of a non-upper semi-Fredholm operator by operators with infinite-dimensional kernel gives the exact identity
\[
 d_+(T)=\inf_{M\in\sub(X)}\inf\{\|E\|:\ E\in\cL(X,Y),\ E|_M=T|_M\}.
\]
This formulates the extension mechanism already used in \cite[Proposition 3.5]{GM12}, together with the classical kernel approximation results \cite{GO86,AAG92}. The gap between $\inn$ and $d_+$ is the gap between making a restriction small and extending it with small norm. A projection provides such an extension, but an arbitrary extension need not yield a projection.

Our main argument recovers a projection from a small extension when a suitable quotient admits liftings. For $H\in\sub(X)$ set
\[
 a_X(H)=\inf_{N\in\sub(H)}\lambda_X(N),
\]
where $\lambda_X(N)$ is the infimum of the norms of projections of $X$ onto $N$, with value $\infty$ when $N$ is not complemented. The pair $(X,H)$ has the lifting property if every operator $X\to X/H$ lifts through the quotient map $q_H:X\to X/H$.

\begin{theoremA}
Let $H\in\sub(X)$ be uniformly subprojective for its inherited norm with constant $K$, and suppose that $(X,H)$ has the lifting property. Then
\[
 \LSF(X)\ge a_X(H)/K.
\]
If no infinite-dimensional subspace of $H$ is complemented in $X$, then $\LSF(X)=\infty$.
\end{theoremA}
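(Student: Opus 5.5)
The plan is to test the definition of $\LSF(X)$ on one concrete family of operators: identities of $X$ into renormings of $X$ that make $H$ very small but leave $X/H$ untouched. Fix $\varepsilon\in(0,1)$ and let $Y_\varepsilon$ be $X$ with the equivalent norm $|x|_\varepsilon=\max(\varepsilon\|x\|,\|q_Hx\|)$. Let $T_\varepsilon:X\to Y_\varepsilon$ be the identity. On $H$ we have $|h|_\varepsilon=\varepsilon\|h\|$, so $\inn(T_\varepsilon)\le\varepsilon$. If $\LSF(X)<\Lambda$, then $d_+(T_\varepsilon)<\Lambda\varepsilon$. By the extension formula for $d_+$ stated in the introduction, there are $M\in\sub(X)$ and $E\in\cL(X,Y_\varepsilon)$ with $E|_M=T_\varepsilon|_M$ and $\|E\|\le\Lambda\varepsilon$. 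Reading this in the original norm of $X$ gives two bounds:
\[
\|E\|_{X\to X}\le\Lambda,\qquad \|q_HE\|\le\Lambda\varepsilon,\qquad Em=m\ (m\in M).
\]

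Next I would invoke the lifting property in a quantitative form. I first need a uniform lifting constant $\rho$: every $S\in\cL(X,X/H)$ should lift to some $L$ with $q_HL=S$ and $\|L\|\le\rho\|S\|$. This should follow from a standard open-mapping argument. The map $L\mapsto q_HL$ from $\cL(X,X)$ onto $\cL(X,X/H)$ is a surjective bounded operator between Banach spaces, so it is open. I expect this to be the main technical point to verify, but it looks routine. Apply it to $S=q_HE$ to get $L$ with $\|L\|\le\rho\Lambda\varepsilon$. Then $F:=E-L$ maps $X$ into $H$, satisfies $\|F\|\le\Lambda(1+\rho\varepsilon)$, and on $M$ it equals $I-L$. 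Hence $R:=F|_M:M\to H$ is an isomorphism onto its range with $\|R-\mathrm{id}_M\|\le\delta:=\rho\Lambda\varepsilon$ and $\|R^{-1}\|\le(1-\delta)^{-1}$.

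Now use the uniform subprojectivity of $H$. The space $R(M)$ is an infinite-dimensional closed subspace of $H$, so it contains some $N\in\sub(H)$ together with a projection $P_H$ of $H$ onto $N$ with $\|P_H\|\le K$. Put $N_0=R^{-1}(N)\subset M$. Then $P:=(R|_{N_0})^{-1}P_HF$ is a projection of $X$ onto $N_0$, with
\[
\|P\|\le p:=\frac{K\Lambda(1+\rho\varepsilon)}{1-\delta}.
\]
However, $N_0$ lies in $M$, not in $H$, so I transport it. Set $U=I+(F-I)P$. On $N_0$ the operator $F$ coincides with $R$, so $\|(F-I)P\|\le\delta p$. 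Hence $U$ is invertible once $\delta p<1$, and $U(N_0)=F(N_0)=N$. Therefore $UPU^{-1}$ is a projection of $X$ onto $N\in\sub(H)$, with norm at most $p(1+\delta p)/(1-\delta p)$. This gives
\[
a_X(H)\le\frac{p(1+\delta p)}{1-\delta p}.
\]
Letting $\varepsilon\to0$ (so $\delta\to0$ and $p\to K\Lambda$) yields $a_X(H)\le K\Lambda$. Since $\Lambda>\LSF(X)$ was arbitrary, we get $\LSF(X)\ge a_X(H)/K$.

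For the final sentence of the theorem: if no infinite-dimensional subspace of $H$ is complemented in $X$, then $a_X(H)=\infty$. In that case the same argument, run with any finite $\Lambda$, produces a contradiction, so $\LSF(X)=\infty$.
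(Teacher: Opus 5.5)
Your proof is correct and follows the paper's route (Lemma~\ref{lem:key} and Theorem~\ref{thm:lifting}): the same renorming $|x|_\varepsilon$, the extension formula, a lifting of $q_HE$ with an open-mapping lifting constant, the operator $E-L$ mapping into $H$ that is a small perturbation of the identity on $M$, and uniform subprojectivity applied inside its range. The only difference is in the last step. The paper gets a projection onto $N\subset H$ directly as $B^{-1}QA$, where $B=QA|_N$ lies within $\theta$ of $I_N$. You instead project first onto $N_0=R^{-1}(N)\subset M$ and then conjugate by $U=I+(F-I)P$. Both give $a_X(H)\le K\Lambda$ in the limit $\varepsilon\downarrow0$.
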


Theorem~\ref{thm:lifting} gives this result quantitatively. It uses the identities from $X$ into the renormings
\[
 |x|_\varepsilon=\max\{\varepsilon\|x\|,\|q_Hx\|\},
 \qquad 0<\varepsilon\le1.
\]
Their restriction quantity is exactly $\varepsilon$. When $a_X(H)=\infty$, their radii have an explicit lower bound of order $\sqrt\varepsilon$. Complemented witnesses are included, but the lifting property also permits uncomplemented witnesses. This is what makes the result useful on $\mathcal L_1$-spaces: the lifting principle of Lindenstrauss \cite{Lin64,KP97} applies to reflexive kernels, whereas the Dunford--Pettis property prevents those kernels from containing infinite-dimensional complemented subspaces.

\Needspace{12\baselineskip}
\begin{theoremB}
For an infinite-dimensional $\mathcal L_1$-space $X$, over either scalar field, the following are equivalent:
\begin{enumerate}[label=\textup{(\roman*)},nosep]
\item $\LSF(X)<\infty$;
\item $X$ is uniformly subprojective;
\item $X$ is subprojective;
\item $X$ contains no infinite-dimensional reflexive subspace.
\end{enumerate}
Moreover $\LSF(X)=\LUSP(X)$, with the value $\infty$ allowed. Their common finite value is at most the distortion of any embedding of $X$ into an $L_1(\mu)$.
\end{theoremB}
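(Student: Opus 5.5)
We prove the cycle (ii)$\Rightarrow$(i)$\Rightarrow$(iv)$\Rightarrow$(iii)$\Rightarrow$(ii), tracking constants throughout so that $\LSF(X)=\LUSP(X)$ comes out at the end.

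\textbf{(ii)$\Rightarrow$(i).} This is Proposition~\ref{prop:subproj}, which gives $\LSF(X)\le\LUSP(X)$.

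\textbf{(i)$\Rightarrow$(iv).} We argue by contraposition using Theorem~A. Suppose $X$ contains an infinite-dimensional reflexive subspace $H$. By Rosenthal's dichotomy for subspaces of $L_1$-type spaces, or simply by passing to a subspace, we may assume $H$ is reflexive with a basis. We then want $H$ itself to be uniformly subprojective, and this is the delicate point: an arbitrary reflexive space need not be subprojective. The way out is to shrink $H$. Every reflexive subspace of an $\mathcal L_1$-space embeds (up to the $\mathcal L_1$ constant) in $L_1$, and hence contains a copy of $\ell_p$ for some $1<p\le2$; this uses the Kadec--Pe\l czy\'nski and Rosenthal results on subspaces of $L_1$. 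Replacing $H$ by such a copy of $\ell_p$, which is uniformly subprojective with $K=1$, keeps $H$ reflexive.

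The lifting property of $(X,H)$ follows from Lindenstrauss's lifting principle \cite{Lin64,KP97}. For an $\mathcal L_1$-space $X$ and a reflexive (or just complemented-in-bidual) subspace $H$, every operator $X\to X/H$ lifts through $q_H$.

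Finally, no infinite-dimensional $N\subseteq H$ is complemented in $X$. Indeed, $\mathcal L_1$-spaces have the Dunford--Pettis property, which passes to complemented subspaces. A reflexive infinite-dimensional space with the Dunford--Pettis property would have a weakly null normalized sequence that is norm null, which is impossible. Hence $a_X(H)=\infty$, and Theorem~A gives $\LSF(X)=\infty$.

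\textbf{(iv)$\Rightarrow$(iii), and the constant.} Let $M\in\sub(X)$. Since $M$ contains no reflexive infinite-dimensional subspace and embeds in $L_1(\mu)$ via some embedding $J$ of distortion $D$, the Kadec--Pe\l czy\'nski dichotomy applies. In $L_1$, a subspace not containing a reflexive subspace is not equi-integrable on any infinite-dimensional piece, so $M$ contains a normalized sequence whose $J$-images are almost disjointly supported. That sequence spans almost isometric $\ell_1$, complemented in $L_1(\mu)$ by a norm-$(1+\delta)$ projection $P$. The main obstacle is transferring this projection from $L_1(\mu)$ back to $X$ without losing more than $D$. Here I would use that $X$ is an $\mathcal L_1$-space, hence complemented in its bidual, and that $X^{**}$ is isometric to an $L_1$ space injectively placed against $\ell_\infty$-type duals. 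Concretely, the dual functionals $P^*$ restricted via $J^*$ give biorthogonal functionals on $X$ of norm at most $D(1+\delta)$ times the $\ell_1$ constant. Because the span is $\ell_1$, one can instead extend the coordinate map $\operatorname{span}\to\ell_1$ to all of $X$ using the $\ell_\infty$-valued extension: the coordinate functionals are bounded by $\|J\|\|J^{-1}\|$ after composing with $P$. This yields a projection of norm at most $D(1+\delta)^2$. Hence $X$ is uniformly subprojective with $\LUSP(X)\le D$, giving (iii) and indeed (ii), since the bound is uniform. Note that (iii)$\Rightarrow$(iv) is immediate from the Dunford--Pettis argument above.

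\textbf{Equality of constants.} It remains to show $\LUSP(X)\le\LSF(X)$ when $\LSF(X)<\infty$. For this I would apply the quantitative form, Theorem~\ref{thm:lifting}, with $H$ a copy of $\ell_1$ sitting inside an arbitrary $M$ after renorming. This case is the second expected difficulty, since $\ell_1$-subspaces are not reflexive and Lindenstrauss lifting must be replaced by the lifting property of $\ell_1$ itself (projectivity). Because $\ell_1$ has $K=1$ and lifts against any quotient, we get $\LSF(X)\ge a_X(H)$ for each $M$. Taking the supremum over $M$ gives $\LSF(X)\ge\LUSP(X)$.
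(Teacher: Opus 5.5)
\textbf{The gap is in your proof of $\LUSP(X)\le\LSF(X)$.} Your cycle of implications and your (i)$\Rightarrow$(iv) step match the paper's argument. The one correction there is a citation: the existence of $\ell_r$, $1<r\le2$, inside a reflexive subspace of $L_1$ is Aldous' theorem, not a consequence of Kadec--Pe\l czy\'nski and Rosenthal.

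The equality step has two problems.
\begin{enumerate}
\item Theorem~\ref{thm:lifting} needs the lifting property of the \emph{pair} $(X,H)$: every operator defined on the $\mathcal L_1$-space $X$ with values in $X/H$ must lift through $q_H$. Projectivity of $\ell_1$ is about operators \emph{defined on} $\ell_1$. Here $H\cong\ell_1$ is the kernel of $q_H$, not the domain, so projectivity gives nothing. The property does hold, but for a different reason. $H$ is complemented in its bidual, so the principle \cite[Proposition 2.1]{KP97} that you already used in (i)$\Rightarrow$(iv) applies. The paper takes another route. Since (i)$\Rightarrow$(iv)$\Rightarrow$(iii) is already proved, it passes to some $N\subset H$ complemented in $X$, where a right inverse of $q_N$ gives lifting, and then applies Corollary~\ref{cor:complemented}.
\item ``$\ell_1$ has $K=1$'' is true only for isometric copies. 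A copy of distortion $d$ yields only $a_X(M)\le d\,\LSF(X)$. To get equality you need, inside every $M$, copies that are $(1+\eta)$-isomorphic to $\ell_1$ for the \emph{original} norm of $X$. That is James' non-distortion argument (Lemma~\ref{lem:James}). The copies from your (iv)$\Rightarrow$(ii) step are almost isometric only for the $L_1(\mu)$ norm, so they give $K\approx D$.
\end{enumerate}

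\textbf{Your (iv)$\Rightarrow$(ii) route is a valid alternative, but the ``obstacle'' is not real.} The paper uses Rosenthal's $\ell_1$ theorem, weak sequential completeness, James, and Dor's projection theorem. You instead take a Kadec--Pe\l czy\'nski almost disjoint sequence in $J(M)$. This is legitimate and more elementary: its span is $(1+\delta)$-isomorphic to $\ell_1$ and $(1+\delta)$-complemented in $L_1(\mu)$ without Dor's theorem. However, transferring the projection back costs nothing. $J^{-1}PJ$ is already a projection of $X$ onto $J^{-1}(W)\subset M$ of norm at most $D\|P\|$, exactly as in the paper. The detour you sketch should be dropped, because it rests on false statements:
\begin{itemize}
\item An $\mathcal L_1$-space need not be complemented in its bidual. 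The kernel of a quotient map $\ell_1\to L_1$ is an $\mathcal L_1$-space that is not. Otherwise the lifting principle would make $L_1$ isomorphic to a complemented subspace of $\ell_1$.
\item $X^{**}$ need not be isometric to an $L_1$-space.
\end{itemize}
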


This is Theorem~\ref{thm:L1}. Its proof combines the lifting obstruction with classical results of Aldous, Rosenthal, James, and Dor. On $L_1(0,1)$ a Gaussian subspace supplies the witness, and Corollary~\ref{cor:L1explicit} gives
\[
 \inn(J_\varepsilon)=\varepsilon,\qquad
 d_+(J_\varepsilon)\ge\frac{2}{1+\sqrt{5+4/\varepsilon}}.
\]
In contrast, $\LSF(\ell_1(\Gamma))=1$. Thus the comparison detects a geometric difference between atomic and non-atomic $L_1$ spaces despite their common local $\mathcal L_1$ structure. Theorem~\ref{thm:conditional} gives a converse beyond this class when every subspace contains an internally uniform witness with the lifting property. The real sum $(\bigoplus_n L_{p_n}(0,1))_{\ell_2}$, with $p_n\to\infty$, shows that ordinary subprojectivity alone does not suffice.

The comparison on H.I. domains requires a different argument, since their subspaces provide no internally uniformly subprojective witnesses. On a complex H.I. space, every endomorphism in the initial norm satisfies $d_+=\inn$ (Corollary~\ref{cor:HI-endomorphisms}). The all-target question nevertheless allows the target to carry an independent equivalent norm. For these domains the complement of $\Phi_+$ is the strictly singular ideal, so the comparison can be studied through the distance to that ideal.

\Needspace{13\baselineskip}
\begin{theoremC}
Let $X$ be H.I. Then $\LSF(X)=\infty$ under any of the following hypotheses:
\begin{enumerate}[label=\textup{(\roman*)},nosep]
\item every strictly singular endomorphism is strictly cosingular;
\item every product of $m$ strictly singular endomorphisms is compact, for some $m\ge1$;
\item the Banach space $\StrictS(X)/\mathcal K(X)$ is reflexive.
\end{enumerate}
Under \textup{(i)}, for every closed $M$ of infinite dimension and codimension, the identity into the norm $\max\{\varepsilon\|x\|,\dist(x,M)\}$ satisfies $\inn=\varepsilon$ and $d_+=1$ for $0<\varepsilon\le1$.
\end{theoremC}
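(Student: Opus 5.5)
Under each hypothesis I will produce a domain-fixed family of targets on which $\inn$ stays small while $d_+$ stays bounded below; this forces $\LSF(X)=\infty$. The basic tool is the extension identity of the introduction,
\[
 d_+(T)=\inf_{N\in\sub(X)}\inf\{\|E\|:\ E\in\cL(X,Y),\ E|_N=T|_N\}.
\]
I also use the fact that on an H.I. domain the complement of $\Phi_+(X,Y)$ is $\StrictS(X,Y)$, so $d_+(T)=\dist(T,\StrictS(X,Y))$.

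For (i), fix a closed $M$ of infinite dimension and codimension, and let $J_\varepsilon:X\to Y_\varepsilon$ be the identity into $|x|_\varepsilon=\max\{\varepsilon\|x\|,\dist(x,M)\}$.
\begin{enumerate}[nosep]
\item On $M$ we have $|x|_\varepsilon=\varepsilon\|x\|$, which gives $\inn\le\varepsilon$.
\item For any $N\in\sub(X)$, $|x|_\varepsilon\ge\varepsilon\|x\|$, which gives $\inn\ge\varepsilon$.
\item Since $\|J_\varepsilon\|\le1$, we get $d_+\le1$.
\end{enumerate}
For $d_+\ge1$, suppose $S\in\StrictS(X,Y_\varepsilon)$ and $\|J_\varepsilon-S\|<1$. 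Let $q:X\to X/M$ be the quotient map. The map $q$, viewed from $Y_\varepsilon$, has norm $\le1$. Hence $\|q-qS\|<1$ as operators $X\to X/M$, where $qS$ is strictly singular.

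I then want to show that $q-qS$ with $\|q-qS\|<1$ cannot happen. The plan is to use the quotient-side property. Since $X$ is H.I., an operator from $X$ onto an infinite-dimensional quotient that is a small perturbation of $q$ should still be surjective up to finite codimension. To get this I intend to transfer $qS$ to an endomorphism. One way is to note that $S$, regarded as an operator $X\to X$ (equal as sets, with equivalent norms since $\varepsilon>0$), is a strictly singular endomorphism. By (i) it is strictly cosingular, so $qS$ is strictly cosingular. Then $q-qS$ is a strictly cosingular perturbation of the surjection $q$, hence lower semi-Fredholm.

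The main obstacle is that this yields only qualitative Fredholmness, not the norm bound. To get the quantitative statement I will instead use the extension identity. Choose $N$ and $E$ with $E|_N=J_\varepsilon|_N$ and $\|E\|<1$. Then $qE$ agrees with $q$ on $N$, and $\|qE\|<1$. Because $J_\varepsilon-E$ vanishes on $N$, it is not upper semi-Fredholm, hence strictly singular. So $q(J_\varepsilon-E)=q-qE$ is strictly cosingular by (i). From $\|qE\|<1$ I will derive a contradiction. Strict cosingularity of $q-qE$ gives a quotient of $X/M$ of infinite dimension on which the composite of $q-qE$ has norm less than $\delta$. Therefore the composite of $q$ and $qE$ with that quotient map differ by less than $\delta$. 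But the composite of $q$ is a quotient map, so it maps the open unit ball onto the open unit ball. The composite of $qE$ has norm $<1-\eta$. This is impossible for $\delta<\eta$, which gives $d_+\ge1$. Since $\inn=\varepsilon\to0$ while $d_+=1$, we conclude $\LSF(X)=\infty$.

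For (ii) and (iii), I reduce to (i)-type obstructions modulo compact operators.
\begin{itemize}[nosep]
\item In case (ii), for strictly singular $S$ the power $S^m$ is compact. I then argue that $S$ is strictly cosingular relative to a suitable renormed target, replacing the cosingular step above by an iteration. Writing $\|qE\|<1$ and iterating $E=J_\varepsilon-S$ as $m$-fold products reduces to compact perturbations, and compact operators are strictly cosingular.
\item In case (iii), reflexivity of $\StrictS(X)/\mathcal K(X)$ should give weak compactness to run a limiting argument that produces the same contradiction. I expect this case to be the hardest, and would first try to extract weak$^*$ cluster points of the approximating strictly singular operators and push them to compact ones.
\end{itemize}
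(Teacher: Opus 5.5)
\textbf{Overall.} Your part (i) is sound, but (ii) and (iii) contain no working argument.

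\textbf{Part (i).} The ``main obstacle'' you describe is not an obstacle. Once $\|q_M-q_MS\|=r<1$ with $S\in\StrictS(X)$, the isometry $q_M^*$ gives $\|(q_MS)^*f\|\ge(1-r)\|f\|$. So $q_MS$ is onto, which contradicts strict cosingularity. That one line is the paper's proof. Your detour through the extension identity and a small infinite-dimensional quotient is correct, but it needs a characterization of strictly cosingular operators that you would have to cite. In fact a one-dimensional quotient, i.e.\ non-surjectivity of $q_MS$, already suffices.

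\textbf{Part (ii): the gap.} You give no reason why (ii) should make strictly singular operators strictly cosingular. The iteration you indicate against the fixed $q_M$ does not close. From $q_M=q_MS^m+q_M(I-S)(I+S+\cdots+S^{m-1})$ you would need control of $\|I+S+\cdots+S^{m-1}\|$. But your only bound on $S=J_\varepsilon-E$ in the original norm is of order $1/\varepsilon$.

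\textbf{Part (ii): the paper's missing idea.} The paper changes the target.
If $\StrictS(X)=\mathcal K(X)$, case (i) applies.
Otherwise, take a non-compact product $Q_0=S_1\cdots S_k$ of maximal length, so that $Q_0S$ is compact for every $S\in\StrictS(X)$. Renorm by $\max\{\varepsilon\|x\|,\|Qx\|\}$ with $Q=Q_0/\|Q_0\|$.
Since $d_+=\dist(\cdot,\StrictS)$, one gets $d_+=\inf_S\max\{\varepsilon\|I-S\|,\|Q-QS\|\}\ge\dist(Q,\mathcal K(X))>0$, while $\inn=\varepsilon$.

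\textbf{Part (ii): a repair in your framework.} You can also stay with quotient targets. Suppose $\LSF(X)\varepsilon<1$. Your own computation then gives, for every $M$ of infinite dimension and codimension, some $S_M\in\StrictS(X)$ with $q_MS_M$ onto.
Put $M_{j+1}=\ker(q_{M_j}S_{j+1})$. It has infinite codimension since $X/M_{j+1}\cong X/M_j$. It has infinite dimension, since otherwise the strictly singular $q_{M_j}S_{j+1}$ would be upper semi-Fredholm.
A downward induction shows that $q_{M_0}S_1\cdots S_m$ maps onto the infinite-dimensional $X/M_0$. This is impossible because the product $S_1\cdots S_m$ is compact.
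Neither mechanism appears in your sketch.

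\textbf{Part (iii).} ``Extract weak$^*$ cluster points and push them to compact ones'' specifies neither the net nor what the limit contradicts. The paper argues by contradiction.
\begin{enumerate}
\item Assume $\LSF(X)<\infty$. Apply the exact formula to $Qx=L^{-1}(A_1x,\dots,A_rx)$, for finite families $A_j\in\StrictS(X)$ with $\|A_j\|\le L$, mapping into a finite $\ell_\infty$-sum of copies of $X$.
\item This yields $S\in\StrictS(X)$ with $\|I-S\|$ bounded by about $\LSF(X)$ and $\max_j\|A_j-A_jS\|$ small. These form a bounded right approximate identity in $A=\StrictS(X)/\mathcal K(X)$.
\item Reflexivity of $A$ gives a weak cluster point $e$ with $ae=a$ for all $a\in A$, so $e^2=e$.
\item Since $I-S$ is Fredholm, $1-e$ is invertible in the Calkin algebra. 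Then $e(1-e)=0$ forces $e=0$, hence $A=0$.
\item Therefore $\StrictS(X)=\mathcal K(X)\subset\mathcal{SC}(X)$, and case (i) contradicts finiteness.
\end{enumerate}
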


Theorems~\ref{thm:HI-cosingular}, \ref{thm:HI-nilpotent} and \ref{thm:HI-reflexive-ideal} establish these obstructions. Ferenczi's quotient indecomposability result verifies the first hypothesis for the original Gowers--Maurey space \cite{Fer99}. The scalar-plus-compact property gives the Argyros--Haydon example \cite{AH11,KL17}, while compact products of strictly singular operators give the Argyros--Motakis examples \cite{AM14,AM16}. Thus the radius on a single H.I. domain can be unbounded relative to $\inn$ even though the comparison for its initial endomorphism norm is exact.

For an arbitrary H.I. space, Theorem~\ref{thm:HI-quotients} expresses $\LSF(X)$ as $\sup_M b_X(M)$, where $b_X(M)$ measures bounded approximation of $q_M$ by $q_MS_n$ with $S_n$ strictly singular. Theorem~\ref{thm:HI-dual} identifies $b_X(M)$ with the reciprocal of a distance in $\cL(X)^*$ over $\mathbb C$. These formulas turn the remaining H.I. question into a precise approximation problem. The quotient renormings used in Theorem C have distortion one, so their divergent radii arise from a global extension obstruction even when the norms agree up to a scalar on some infinite-dimensional subspace.

Theorem~\ref{thm:renorm} reduces the all-target comparison to identities into equivalent renormings of the fixed domain. Proposition~\ref{prop:complemented-inheritance} passes the same constant to complemented subspaces. Section~\ref{sec:converse} proves the $\mathcal L_1$ characterization and the conditional converse; Section~\ref{sec:HI} develops the quotient formulas and H.I. obstructions. Section~\ref{sec:questions} presents the remaining questions for H.I. spaces, classical domains and the lower semi-Fredholm radius.

\subsection{Notation and elementary facts}\label{sec:prelim}
Spaces are over a fixed field $\mathbb K=\mathbb R$ or $\mathbb C$, and the domain $X$ is infinite-dimensional. For $R\in\cL(X,Y)$ let $j(R)=\inf_{\|x\|=1}\|Rx\|$. For a closed subspace $N\subset X$,
\[
\lambda_X(N)=\inf\{\|P\|:\ P\in\cL(X),\ P^2=P,\ \Ran P=N\},
\]
with $\inf\varnothing=\infty$. The uniform subprojectivity constant is
\begin{equation}\label{eq:usp-def}
 \LUSP(X)=\sup_{M\in\sub(X)}a_X(M)=\sup_{M\in\sub(X)}\inf_{N\in\sub(M)}\lambda_X(N).
\end{equation}
A constant $K$ is \emph{admissible} if every $M\in\sub(X)$ contains an $N\in\sub(M)$ and a projection $X\to N$ of norm at most $K$. Every $K>\LUSP(X)$ is admissible, but attainment at the infimum is not assumed. A closed subspace $H$ is \emph{uniformly subprojective with constant $K$} if $K$ is admissible for $H$ with its inherited norm. This property passes to closed infinite-dimensional subspaces of $H$ with the same constant, by restricting projections, and it is preserved by isomorphisms up to the factor $\|U\|\|U^{-1}\|$. Direct sums $E\oplus_\infty F$ carry the norm $\max\{\|x\|,\|y\|\}$.

An operator belongs to $\Phi_+$ if and only if it is bounded below on a closed subspace of finite codimension. We use Mazur's device in the following form: \emph{if $F\subset X$ is finite-dimensional and $0<\delta<1$, there is a closed subspace $W$ of finite codimension with $\|y\|\le(1-\delta)^{-1}\|y+w\|$ for $y\in F$, $w\in W$.} Indeed, if $z_1,\dots,z_k$ is a $\delta$-net of the unit sphere of $F$, $f_i$ are norm-one functionals with $f_i(z_i)=1$ and $W=\bigcap_i\ker f_i$, then for a unit vector $y\in F$ with $\|y-z_i\|\le\delta$ we get $\|y+w\|\ge|f_i(y+w)|=|f_i(y)|\ge1-\delta$.

\section{The extension formula and the renorming reduction}\label{sec:extension}

\begin{lemma}\label{lem:kernel}
For $T\in\cL(X,Y)$ put $\kappa_\infty(T)=\inf\{\|E\|:\dim\ker(T-E)=\infty\}$. Then $d_+(T)=\kappa_\infty(T)$. More precisely, if $S\notin\Phi_+$ and $\delta>0$, there is a nuclear operator $V$ with $\|V\|<\delta$ and $\dim\ker(S-V)=\infty$.
\end{lemma}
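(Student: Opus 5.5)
The argument has two inequalities. The bound $d_+(T)\le\kappa_\infty(T)$ is immediate: if $\dim\ker(T-E)=\infty$, then $T-E\notin\Phi_+$, so $d_+(T)\le\|T-(T-E)\|=\|E\|$. For $\kappa_\infty(T)\le d_+(T)$, fix $S\notin\Phi_+$ with $\|T-S\|<d_+(T)+\eta$ and apply the ``more precisely'' statement to $S$. It gives a nuclear $V$ with $\|V\|<\delta$ and $\dim\ker(S-V)=\infty$. Putting $E=T-S+V$, we get $T-E=S-V$ and $\|E\|<d_+(T)+\eta+\delta$. So everything reduces to the perturbation claim.

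For the claim, I would use that $S\notin\Phi_+$ means $S$ is not bounded below on any closed finite-codimensional subspace. Choose $\delta_n>0$ with $\sum_n\delta_n<\delta/4$, say, and construct inductively unit vectors $x_n$ and norm-bounded functionals $g_n$, roughly biorthogonal, with $\|Sx_n\|<\delta_n$. Step $n$ goes as follows. Let $F_{n-1}=\operatorname{span}\{x_1,\dots,x_{n-1}\}$. Using Mazur's device, take a finite-codimensional $W_n$ satisfying $\|y\|\le 2\|y+w\|$ for $y\in F_{n-1}$ and $w\in W_n$. Intersect $W_n$ with the kernels of the previously chosen $g_1,\dots,g_{n-1}$; the result is still finite-codimensional, so $S$ is not bounded below on it, and we pick a unit $x_n$ there with $\|Sx_n\|<\delta_n$. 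Then extend by Hahn--Banach: define $g_n$ on $F_{n-1}+\mathbb K x_n$ by $g_n(y+tx_n)=t$. Mazur's estimate bounds this uniformly ($\|g_n\|\le 3$, since $|t|=\|tx_n\|\le\|y+tx_n\|+\|y\|\le 3\|y+tx_n\|$), and $g_n$ vanishes on $x_1,\dots,x_{n-1}$. Since $x_n\in\ker g_i$ for $i<n$, we get $g_i(x_j)=\delta_{ij}$.

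Now set $V=\sum_n g_n\otimes Sx_n$, that is, $Vx=\sum_n g_n(x)Sx_n$. It is nuclear, with nuclear norm at most $\sum_n\|g_n\|\|Sx_n\|\le 3\sum_n\delta_n<\delta$. Moreover $Vx_j=Sx_j$ by biorthogonality, so $(S-V)x_j=0$ for every $j$. The $x_j$ are linearly independent, because biorthogonality forces any vanishing finite combination to have zero coefficients. Hence $\ker(S-V)$ is infinite-dimensional.

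I expect the main obstacle to be keeping $\sup\|g_n\|$ bounded so that the nuclear series converges. Mazur's device with a fixed $\delta=1/2$ handles exactly this, and all the other steps are bookkeeping.
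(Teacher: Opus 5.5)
Your proof is correct and follows the paper's argument: Mazur's device with parameter $1/2$ yields unit vectors $x_n$ with summably small $\|Sx_n\|$ and uniformly bounded biorthogonal functionals, and $V=\sum_n g_n\otimes Sx_n$ is the nuclear correction annihilating $S-V$ on every $x_n$. The only difference is minor. You force biorthogonality by choosing $x_n\in\bigcap_{i<n}\ker g_i$ and bound each $g_n$ directly by $3$, whereas the paper nests the Mazur subspaces to make $(x_n)$ basic with constant $2$ and then extends its biorthogonal functionals, which have norm at most $4$.
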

\begin{proof}
Operators with infinite-dimensional kernel are not in $\Phi_+$, so $d_+\le\kappa_\infty$. Let $S\notin\Phi_+$ with $\dim\ker S<\infty$ (otherwise take $V=0$). Then $S$ is not bounded below on any closed subspace of finite codimension. Put $W_1=X$ and choose a unit vector $x_1$ with $\|Sx_1\|<\delta2^{-3}$. For $n\ge2$, put $W_n=W\cap W_{n-1}$, where $W$ is given by Mazur's device for $\operatorname{span}\{x_1,\dots,x_{n-1}\}$ with parameter $1/2$, and choose a unit vector $x_n\in W_n$ with $\|Sx_n\|<\delta2^{-n-2}$. Since $x_{m+1},\dots,x_n\in W_{m+1}$, we have $\|\sum_{k\le m}a_kx_k\|\le2\|\sum_{k\le n}a_kx_k\|$ for $m<n$. Thus $(x_n)$ is basic with constant at most $2$, its biorthogonal functionals have norm at most $4$, and by the Hahn--Banach theorem they extend to $f_n\in X^*$ with $\|f_n\|\le4$ and $f_n(x_m)=\delta_{nm}$. The operator $V=\sum_n f_n\otimes Sx_n$ is nuclear, $\|V\|<\delta$, and $(S-V)x_m=0$ for all $m$. For general $T$ and $\eta>d_+(T)$, apply this to $S=T-E$ with $\|E\|<\eta$ and $T-E\notin\Phi_+$; then $\kappa_\infty(T)\le\|E+V\|<\eta+\delta$.
\end{proof}

Lemma~\ref{lem:kernel} is a quantitative form of the instability of non-semi-Fredholm operators under compact perturbations \cite{GO86,AAG92}. For $M\in\sub(X)$ and $R\in\cL(M,Y)$ put $\ext_X(R)=\inf\{\|E\|:\ E\in\cL(X,Y),\ E|_M=R\}$.

\begin{proposition}\label{prop:extension}
For every $T\in\cL(X,Y)$,
\[
d_+(T)=\inf_{M\in \sub(X)}\ext_X(T|_M).
\]
\end{proposition}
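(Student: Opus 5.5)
The plan is to prove the two inequalities separately, using Lemma~\ref{lem:kernel} to replace $d_+(T)$ by $\kappa_\infty(T)$ throughout.

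\emph{Upper bound.} Fix $M\in\sub(X)$ and any $E\in\cL(X,Y)$ with $E|_M=T|_M$. Then $T-E$ vanishes on $M$, so $\ker(T-E)\supset M$ is infinite-dimensional. Consequently $T-E\notin\Phi_+$, and
\[
d_+(T)\le\|T-(T-E)\|=\|E\|.
\]
Taking the infimum over admissible $E$ gives $d_+(T)\le\ext_X(T|_M)$. Taking the infimum over $M$ gives $d_+(T)\le\inf_M\ext_X(T|_M)$. This direction does not even need Lemma~\ref{lem:kernel}.

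\emph{Lower bound.} Let $\eta>d_+(T)$. By Lemma~\ref{lem:kernel}, $\kappa_\infty(T)=d_+(T)<\eta$, so there is $E\in\cL(X,Y)$ with $\|E\|<\eta$ and $\dim\ker(T-E)=\infty$. Put $M=\ker(T-E)$. It is closed, being the kernel of a bounded operator, and infinite-dimensional, so $M\in\sub(X)$. On $M$ we have $E|_M=T|_M$, so $E$ is an extension of $T|_M$, and therefore
\[
\ext_X(T|_M)\le\|E\|<\eta.
\]
Hence $\inf_M\ext_X(T|_M)<\eta$ for every $\eta>d_+(T)$, which yields the reverse inequality.

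There is no real obstacle here. All the analytic work, namely converting a non-upper semi-Fredholm perturbation into one with infinite-dimensional kernel at arbitrarily small extra cost, is already contained in Lemma~\ref{lem:kernel}. The only points to check are that $\ker(T-E)$ is closed and infinite-dimensional, hence a legitimate member of $\sub(X)$, and that the infima are handled with strict inequalities rather than assuming any extremal $E$ or $M$ is attained.
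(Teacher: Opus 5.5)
Your proof is correct and matches the paper's argument: both rest on the correspondence between extensions $E$ of $T|_M$ and perturbations with $M\subset\ker(T-E)$, with Lemma~\ref{lem:kernel} supplying $d_+(T)=\kappa_\infty(T)$ for the nontrivial inequality. Your direct verification of the upper bound without the lemma is just the easy half of that lemma written out inline.
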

\begin{proof}
If $E$ extends $T|_M$, then $M\subset\ker(T-E)$; conversely, if $\dim\ker(T-E)=\infty$, then $E$ extends $T|_M$ for $M=\ker(T-E)$. So the right-hand side is $\kappa_\infty(T)$.
\end{proof}

Thus $\inn(T)$ is the infimum of the norms of restrictions of $T$, and $d_+(T)$ the infimum of the costs of extending them back to $X$. For a closed $N\subset X$, an extension of $I_N$ to an operator $X\to N$ is a projection onto $N$; this is how projection constants enter.

\begin{proposition}\label{prop:subproj}
If $X$ is uniformly subprojective with constant $\lambda$, or if $Y$ is $\lambda$-injective, then $\inn(T)\le d_+(T)\le\lambda\inn(T)$ for $T\in\cL(X,Y)$. Hence $\LSF(X)\le\LUSP(X)$. Moreover $\LUSP(X)=1$, and so $d_+=\inn$, when $X$ is a Hilbert space, $\ell_p(\Gamma)$ with $1\le p<\infty$, or $c_0(\Gamma)$.
\end{proposition}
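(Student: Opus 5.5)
The plan rests on Proposition~\ref{prop:extension}, which turns upper bounds on $d_+(T)$ into constructions of extensions. The lower bound $\inn(T)\le d_+(T)$ is the standard half and I will check it directly. If $\|E\|<\inn(T)$, then for every $M\in\sub(X)$ and every $\eta>0$ there is a unit vector $x\in M$ with
\[
\|Tx\|\ge\inn(T)-\eta .
\]
Indeed, otherwise $\|T|_M\|<\inn(T)$, which is impossible. Consequently $\|(T-E)x\|\ge \inn(T)-\eta-\|E\|>0$ for small $\eta$, so no infinite-dimensional subspace lies in $\ker(T-E)$. Equivalently, $E$ cannot extend $T|_M$ for any $M\in\sub(X)$: if it did, then $\|Tx\|=\|Ex\|\le\|E\|$ on the unit sphere of $M$, giving $\|T|_M\|\le\|E\|<\inn(T)$. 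Hence $\ext_X(T|_M)\ge\|T|_M\|\ge\inn(T)$ for every $M$, and Proposition~\ref{prop:extension} gives $\inn\le d_+$.

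For the upper bound, fix $\eta>0$ and choose $M\in\sub(X)$ with $\|T|_M\|<\inn(T)+\eta$. The two hypotheses are handled as follows.
\begin{itemize}
\item[(a)] $X$ uniformly subprojective with constant $\lambda$. Here $\lambda$ is admissible, so I pick $N\in\sub(M)$ and a projection $P$ of $X$ onto $N$ with $\|P\|\le\lambda$. Then $E=TP$ satisfies $E|_N=T|_N$ and
\[
\|E\|\le\|T|_N\|\,\|P\|\le\lambda(\inn(T)+\eta).
\]
\item[(b)] $Y$ $\lambda$-injective. By definition $T|_M$ extends to some $E\in\cL(X,Y)$ with $\|E\|\le\lambda\|T|_M\|$.
\end{itemize}
In either case Proposition~\ref{prop:extension} gives $d_+(T)\le\lambda(\inn(T)+\eta)$, and letting $\eta\to0$ finishes the estimate. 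Applying (a) with every admissible $\lambda>\LUSP(X)$ then yields $\LSF(X)\le\LUSP(X)$.

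The main work is the claim $\LUSP(X)=1$ for the listed spaces. By \eqref{eq:usp-def}, I need every $M\in\sub(X)$ to contain subspaces $N$ with $\lambda_X(N)\le1+\delta$ for every $\delta>0$.
\begin{itemize}
\item Hilbert spaces: take $N=M$ and the orthogonal projection, which has norm $1$.
\item $\ell_p(\Gamma)$ with $1\le p<\infty$, and $c_0(\Gamma)$: I will use the gliding hump argument.
\begin{enumerate}
\item Since $M$ is infinite-dimensional, each element of $M$ has countable support. Taking a separable infinite-dimensional subspace of $M$, everything takes place inside some $\ell_p(\Gamma_0)$ (or $c_0(\Gamma_0)$) with $\Gamma_0$ countable. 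That subspace is $1$-complemented by coordinate restriction, so I reduce to $\ell_p$ or $c_0$.
\item By the standard perturbation argument, $M$ contains a normalized sequence $(y_n)$ that is a small perturbation of a block sequence $(u_n)$ of the unit vector basis, with $\sum\|y_n-u_n\|$ arbitrarily small.
\item The closed span of a normalized block sequence $(u_n)$ is $1$-complemented. In $\ell_p$ I use $Px=\sum f_n(x)u_n$, where $f_n$ are norming functionals supported on the supports of $u_n$. In $c_0$ I use the same formula with $f_n$ taken as norm-one functionals in $\ell_1$ on the block supports.
\item The small-perturbation principle transfers this to $N=[y_n]\subset M$ with projection norm at most $1+\delta$. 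This is the Bessaga--Pełczyński estimate: if $\sum\|y_n-u_n\|\,\|f_n\|$ is small, then $N$ is complemented by a projection whose norm is close to $\|P\|$.
\end{enumerate}
\end{itemize}

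The step I expect to need the most care is this perturbation estimate, specifically keeping the projection norm within $1+\delta$ rather than within merely some fixed constant. I would write $U=I+\sum f_n\otimes(y_n-u_n)$, an invertible operator close to $I$ that maps $[u_n]$ onto $[y_n]$, and set $UPU^{-1}$. Its norm is at most $\|U\|\,\|U^{-1}\|\,\|P\|$, and this tends to $1$ as the perturbation shrinks. Together with the bound $\LUSP\ge1$, this gives $\LUSP(X)=1$, and hence $d_+=\inn$ for these domains.
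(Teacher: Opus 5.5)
Your proof is correct and follows the paper's own route. The lower bound comes from $\ext_X(T|_M)\ge\|T|_M\|$; your preliminary kernel argument is redundant but harmless. The upper bound uses the extension $TP$, or the $\lambda$-injective extension, in Proposition~\ref{prop:extension}. Finally, $\LUSP=1$ follows from the countable-support reduction together with the gliding-hump operator $U=I+\sum_n f_n\otimes(y_n-u_n)$ conjugating a norm-one block projection, which gives the bound $(1+\theta)/(1-\theta)$.
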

The Hilbert-domain and injective-target statements are \cite[Proposition 3.5]{GM12}; the argument below also gives the uniform subprojectivity formulation. A $\lambda$-injective target means that every bounded operator into it, defined on a closed subspace of any Banach space, extends with norm at most $\lambda$ times its original norm.
\begin{proof}
If $E|_M=T|_M$ then $\|T|_M\|\le\|E\|$, which gives the left inequality. For the right one fix $M\in\sub(X)$. In the first case choose $N\in\sub(M)$ and a projection $P$ of $X$ onto $N$ with $\|P\|\le\lambda$; then $TP$ extends $T|_N$ and $\|TP\|\le\lambda\|T|_M\|$. In the second case extend $T|_M$ with norm at most $\lambda\|T|_M\|$. Proposition~\ref{prop:extension} gives $d_+(T)\le\lambda\|T|_M\|$.

Hilbert spaces have constant $1$. Let $X=\ell_p$ ($1\le p<\infty$) or $c_0$ with unit vector basis $(e_k)$, $M\in\sub(X)$, $0<\theta<1$. Put $n_0=0$ and choose inductively unit vectors $y_k\in M$ vanishing on the first $n_{k-1}$ coordinates and integers $n_k>n_{k-1}$ such that the tail of $y_k$ beyond $n_k$ has norm at most $\theta2^{-k-3}$. Let $u_k$ be the normalized restriction of $y_k$ to $(n_{k-1},n_k]$; then $\|y_k-u_k\|\le\theta2^{-k-2}$. With $u_k^*$ the norm-one functional supported on the same block and $u_k^*(u_k)=1$, $Px=\sum_ku_k^*(x)u_k$ is a projection of norm one onto $[u_k]$. The operator $U=I+\sum_ku_k^*\otimes(y_k-u_k)$ satisfies $\|U-I\|\le\theta$ and $Uu_k=y_k$, so $UPU^{-1}$ is a projection onto $[y_k]\subset M$ of norm at most $(1+\theta)/(1-\theta)$. For $\ell_p(\Gamma)$ and $c_0(\Gamma)$, a separable $M_0\in\sub(M)$ is supported by a countable $\Gamma_0$, and $\ell_p(\Gamma_0)$, $c_0(\Gamma_0)$ are $1$-complemented by restriction.
\end{proof}

\begin{lemma}\label{lem:finite}
If $R:X\to Y$ has finite rank, then $\inn(T+R)=\inn(T)$ and $d_+(T+R)=d_+(T)$. If $F$ is finite-dimensional and $i:Y\to Y\oplus_\infty F$ is the inclusion, then $\inn(iT)=\inn(T)$ and $d_+^{X,Y\oplus_\infty F}(iT)=d_+^{X,Y}(T)$.
\end{lemma}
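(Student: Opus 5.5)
The plan is to treat the two statements separately and, in each case, prove the two inequalities needed for the equalities.

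\emph{Finite-rank perturbations.} Let $R$ have finite rank. For $\inn$, take $M\in\sub(X)$ and put $M'=M\cap\ker R$, which is closed, infinite-dimensional and of finite codimension in $M$. Then $\|(T+R)|_{M'}\|=\|T|_{M'}\|\le\|T|_M\|$. Taking infima gives $\inn(T+R)\le\inn(T)$, and the reverse inequality follows by symmetry, replacing $R$ by $-R$. For $d_+$ I would use Proposition~\ref{prop:extension}. If $E$ extends $T|_M$, set $M'=M\cap\ker R$. Then $E$ extends $(T+R)|_{M'}$, since $R$ vanishes on $M'$. Hence $\inf_M\ext_X((T+R)|_M)\le\inf_M\ext_X(T|_M)$, and symmetry again gives equality. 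Alternatively, one can note that $\Phi_+$ and its complement are invariant under finite-rank perturbations. The extension route avoids that and is self-contained.

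\emph{Adding a finite-dimensional summand: $\inn$.} Let $i:Y\to Y\oplus_\infty F$ be the inclusion. It is isometric, so $\|iT|_M\|=\|T|_M\|$ for every $M$, and therefore $\inn(iT)=\inn(T)$.

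\emph{Adding a finite-dimensional summand: $d_+$, first inequality.} Any extension $E:X\to Y$ of $T|_M$ yields the extension $iE$ of $iT|_M$, with $\|iE\|=\|E\|$. Proposition~\ref{prop:extension} then gives $d_+(iT)\le d_+(T)$.

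\emph{The reverse inequality $d_+(T)\le d_+(iT)$.} This is the main step. Let $E=(E_1,E_2):X\to Y\oplus_\infty F$ extend $iT|_M$. Then $E_1|_M=T|_M$ and $E_2|_M=0$, and $\|E_1\|\le\|E\|$ because of the max norm. So $E_1$ is an extension of $T|_M$ into $Y$ of norm at most $\|E\|$. Taking infima over $E$ and $M$ gives $d_+(T)\le d_+(iT)$. The only point to check is that projecting onto the first coordinate does not increase the norm. This holds for $\oplus_\infty$ and is exactly why that norm is chosen. With it the argument needs no finite-dimensionality of $F$ at all. If one preferred a different direct-sum norm, the alternative would be to pass to $M\cap\ker E_2$ and lose only a constant, but that is not needed here.
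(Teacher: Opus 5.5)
Your proof is correct. The $\inn$ parts coincide with the paper's: you intersect with $\ker R$ for the finite-rank case and use that the inclusion is isometric for the sum.

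For the two $d_+$ statements you take a different route.

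\emph{The paper's route.} It works directly with $d_+$ as the distance to $\cL(X,Y)\setminus\Phi_+(X,Y)$. The finite-codimensional criterion, applied on subspaces of $\ker R$, shows that translation by a finite-rank $R$ preserves this complement. For the sum, a perturbation $B=(B_1,B_2)$ satisfies $\|B_1\|\le\|B\|$ and $iT-B\in\Phi_+\iff T-B_1\in\Phi_+$. Perturbations of the form $iD$ give the reverse inequality.

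\emph{Your route.} You pass everything through Proposition~\ref{prop:extension}, so each step becomes a statement about kernels. In the finite-rank case you shrink $M$ to $M\cap\ker R$. For the sum you project an extension onto the first coordinate. This is legitimate: Proposition~\ref{prop:extension} precedes the lemma and does not depend on it.

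\emph{What each route buys.} Yours makes it visible that $\dim F<\infty$ plays no role in the $\oplus_\infty$ statement. The same is true of the paper's argument, since only the implication $T-B_1\in\Phi_+\Rightarrow iT-B\in\Phi_+$ is actually used, and that needs no finite rank. However, your remark that the extension route is the more self-contained one is backwards. The paper's route needs only the elementary finite-codimensional characterization of $\Phi_+$. Proposition~\ref{prop:extension}, by contrast, rests on the basic-sequence and nuclear-perturbation construction of Lemma~\ref{lem:kernel}.
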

\begin{proof}
Applying the finite-codimensional criterion on subspaces of $\ker R$ shows $T+R\in\Phi_+\iff T\in\Phi_+$, so translation by $R$ preserves the complement of $\Phi_+$. For $M\in\sub(X)$, $\inn(T)\le\|T|_{M\cap\ker R}\|\le\|(T+R)|_M\|$; take the infimum and exchange $T$ and $T+R$. A perturbation $B=(B_1,B_2):X\to Y\oplus_\infty F$ has $B_2$ of finite rank and $\|B_1\|\le\|B\|$, so $iT-B\in\Phi_+\iff T-B_1\in\Phi_+$; perturbations $iD$ give the reverse inequality. The statement on $\inn$ follows from the isometric inclusion.
\end{proof}

For an equivalent norm $|\cdot|$ on $X$ let $X_{|\cdot|}$ be the renormed space and $J_{|\cdot|}:X\to X_{|\cdot|}$ the identity; the domain keeps its norm.

\begin{theorem}\label{thm:renorm}
For $C\ge1$, the inequality $d_+^{X,Y}(T)\le C\inn(T)$ holds for all $Y$ and $T\in\cL(X,Y)$ if and only if it holds for all identities $J_{|\cdot|}$ into equivalent renormings of $X$. Thus
\begin{equation}\label{eq:renorm}
 \LSF(X)=\sup_{|\cdot|\sim\|\cdot\|}\frac{d_+(J_{|\cdot|})}{\inn(J_{|\cdot|})}.
\end{equation}
\end{theorem}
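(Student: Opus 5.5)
The direction ``only if'' is immediate, since each $J_{|\cdot|}$ is itself an operator from $X$ into a Banach space $Y=X_{|\cdot|}$. For the converse I plan to encode an arbitrary $T\in\cL(X,Y)$ into a single renorming of $X$, and to compare the two quantities through the extension formula of Proposition~\ref{prop:extension}. Formula \eqref{eq:renorm} then follows: the supremum is at most $\LSF(X)$ by the trivial direction, and it is at least $\LSF(X)$ by the converse applied with $C$ equal to the supremum.

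Fix $T\in\cL(X,Y)$ and $0<\varepsilon\le1$, and put
\[
 |x|=\max\{\varepsilon\|x\|,\ \|Tx\|\}.
\]
Since $\varepsilon\|x\|\le|x|\le\max\{\varepsilon,\|T\|\}\|x\|$, this is an equivalent norm. Let $J=J_{|\cdot|}$.

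First, I bound the restriction quantity. For $M\in\sub(X)$ and $x\in M$ we have $|x|\le\max\{\varepsilon,\|T|_M\|\}\|x\|$. Taking the infimum over $M$ gives
\[
 \inn(J)\le\max\{\varepsilon,\inn(T)\}.
\]

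Second, I show that $d_+(T)\le d_+(J)$ by comparing extension costs. Let $M\in\sub(X)$ and let $E\in\cL(X,X_{|\cdot|})$ satisfy $E|_M=J|_M$. Then $TE\in\cL(X,Y)$ satisfies $TE|_M=T|_M$. Moreover, for every $x$,
\[
 \|TEx\|\le|Ex|\le\|E\|\,\|x\|,
\]
so $\|TE\|\le\|E\|$. Hence $\ext_X(T|_M)\le\ext_X(J|_M)$ for every $M$, and Proposition~\ref{prop:extension} gives $d_+(T)\le d_+(J)$.

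Combining the two steps with the hypothesis on renormings yields
\[
 d_+(T)\le d_+(J)\le C\inn(J)\le C\max\{\varepsilon,\inn(T)\}.
\]
It remains to remove $\varepsilon$, and there are two cases.
\begin{itemize}
\item If $\inn(T)>0$, choose $\varepsilon\le\min\{1,\inn(T)\}$; this gives $d_+(T)\le C\inn(T)$.
\item If $\inn(T)=0$, then $T\notin\Phi_+$, since both quantities vanish exactly off $\Phi_+$. Hence $d_+(T)=0$ and the inequality holds trivially.
\end{itemize}

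The only delicate point is the choice of renorming. It must make the restriction quantity of $J$ no larger than that of $T$ (up to $\varepsilon$), while letting extensions into $X_{|\cdot|}$ be pushed forward to extensions of $T|_M$ with no loss of norm. The term $\|Tx\|$ in the norm does exactly this, through the inequality $\|TEx\|\le|Ex|$. I do not expect any further obstacle; in particular, Lemma~\ref{lem:finite} is not needed for this argument.
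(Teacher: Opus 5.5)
Your proof is correct, but it reaches the reduction by a different route from the paper.

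\textbf{The paper's route.} It treats only $T\in\Phi_+$. It adds the finite-rank coordinate $P_0$, a projection onto $\ker T$, so that $T'x=(Tx,P_0x)$ is an isomorphism onto its closed range $Z\subset Y\oplus_\infty\ker T$. Lemma~\ref{lem:finite} shows this changes neither $\inn$ nor $d_+$. Shrinking the target to $Z$ can only increase $d_+$. Pulling back the norm of $Z$ then gives a parameter-free renorming with $\inn(J)=\inn(T)$ exactly and $T'=UJ$ for a surjective isometry $U$. So each upper semi-Fredholm operator is identified, modulo finite rank and isometry, with an identity into a renorming.

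\textbf{Your route.} You use the floor norm $\max\{\varepsilon\|x\|,\|Tx\|\}$, which is equivalent for every $T$. No kernel correction and no Lemma~\ref{lem:finite} are needed. $T$ factors contractively through $X_{|\cdot|}$, so extensions push forward without loss, and the floor costs only $\varepsilon$ in $\inn$. This is the same kind of norm the paper itself uses later in \eqref{eq:norm-eps} and \eqref{eq:HI-graph}.

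\textbf{Removing the extension formula.} Your second step relies on Proposition~\ref{prop:extension}, hence on Lemma~\ref{lem:kernel}, but this can be avoided. If $J-E\notin\Phi_+$, then $T-TE=T(J-E)\notin\Phi_+$. Indeed, $|(J-E)x|\ge\|T(J-E)x\|$, so a lower bound for $T(J-E)$ on a finite-codimensional subspace would also be one for $J-E$.

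\textbf{Two small points.} First, the case split is unnecessary: letting $\varepsilon\downarrow0$ in $d_+(T)\le C\max\{\varepsilon,\inn(T)\}$ already covers $\inn(T)=0$. Second, to obtain \eqref{eq:renorm} from your converse, note that the supremum is at least $1$, since the original norm gives ratio $1$. This matters because $\LSF(X)$ is an infimum over $C\ge1$, while your converse argument does not itself use $C\ge1$.
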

\begin{proof}
Let $T\in\Phi_+(X,Y)$ (otherwise $d_+(T)=0$), let $P_0$ be a projection onto $\ker T$, and define $T'x=(Tx,P_0x)\in E=Y\oplus_\infty\ker T$. If $\|w\|\le c\|Tw\|$ on $\ker P_0$, then $\|x\|\le(c+1)\|T'x\|$, so $Z=T'(X)$ is closed and $S:X\to Z$, $Sx=T'x$, is an isomorphism onto. By Lemma~\ref{lem:finite}, $\inn(S)=\inn(T)$ and $d_+^{X,E}(T')=d_+^{X,Y}(T)$. An operator with values in $Z$ is upper semi-Fredholm into $Z$ if and only if it is so into $E$, hence $d_+^{X,Y}(T)=d_+^{X,E}(T')\le d_+^{X,Z}(S)$. Finally $|x|=\|Sx\|$ is an equivalent norm, $S=UJ_{|\cdot|}$ with $U$ a surjective isometry, and composition with $U$ preserves $d_+$ and $\inn$. The identity of $(X,\|\cdot\|)$ has $\inn=d_+=1$, so the supremum is at least $1$.
\end{proof}

\begin{proposition}\label{prop:complemented-inheritance}
If $H$ is a closed infinite-dimensional complemented subspace of $X$, with its inherited norm, then
\[
 \LSF(H)\le\LSF(X).
\]
\end{proposition}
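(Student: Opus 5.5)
The plan is to use the renorming reduction of Theorem~\ref{thm:renorm} on $H$ and to realise each renormed identity of $H$ as a restriction of an operator on $X$. The point is to pay for the complement of $H$ by a large weight in the target, not by $\|P\|$. Fix a projection $P$ of $X$ onto $H$, put $G=\ker P$, and take an equivalent norm $|\cdot|$ on $H$ with identity $J:H\to H_{|\cdot|}$. By \eqref{eq:renorm} it suffices to show $d_+(J)\le C\,\inn(J)$ for every admissible constant $C>\LSF(X)$. For $t\ge1$ define
\[
T_t:X\to Y=H_{|\cdot|}\oplus_\infty G,\qquad T_tx=\bigl(JPx,\ t(I-P)x\bigr).
\]
On subspaces $M\subset H$ we have $T_t|_M=J|_M$ isometrically into the first summand, so $\inn(T_t)\le\inn(J)$ for every $t$. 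Hence $d_+(T_t)\le C\inn(J)$. This bound is uniform in $t$, and none of these estimates involves $\|P\|$.

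Next I transfer this back to $H$. Fix $\delta>0$. By Proposition~\ref{prop:extension} there are $M\in\sub(X)$ and $E=(E_1,E_2)\in\cL(X,Y)$ with $E|_M=T_t|_M$ and $\|E\|\le e:=C\inn(J)+\delta$. On $M$ we have $E_2=t(I-P)$, so $\|(I-P)x\|\le (e/t)\|x\|$ for $x\in M$. For $t$ large, $P|_M$ is therefore an isomorphism with constant close to $1$, and $N=P(M)\in\sub(H)$. For $y=Px\in N$ with $x\in M$,
\[
Jy=E_1x=E_1y+E_1(I-P)x ,
\]
so $A:=J-E_1|_H$ satisfies $\|A|_N\|\le e^2/(t-e)=:\eta_t$. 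Here $\|E_1|_H\|\le e$. If $A$ vanished on $N$, then $E_1|_H$ would extend $J|_N$, and Proposition~\ref{prop:extension} would give $d_+(J)\le e$; letting $\delta\to0$ would finish the proof.

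The main obstacle is removing the small error $A|_N$ without an extension cost. A general small restriction does not have a cheap extension, so I would argue as in the proof of Lemma~\ref{lem:kernel}. Using Mazur's device inside $N$, choose a basic sequence $(y_n)$ in $N$ with basis constant at most $2$, and let $f_n\in H^*$ with $\|f_n\|\le4$ be Hahn--Banach extensions of its biorthogonal functionals. Then $V=\sum_n f_n\otimes Ay_n$ satisfies $V y_m=Ay_m$, so $E_1|_H+V$ extends $J$ on $[y_n]$, and $\|V\|\le4\sum_n\|Ay_n\|$.

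Summability requires $\|Ay_n\|\le\delta2^{-n}$, but a single $t$ only gives $\|Ay_n\|\le\eta_t$. I would try two ways to obtain decaying weights. The first replaces $t$ by weights $t_k\to\infty$ on an increasing chain of finite-codimensional pieces, which is harmless by Lemma~\ref{lem:finite}. The second uses the uniformity in $t$ and a diagonal or ultrapower choice of the $E^{(t)}$, for instance via a weak$^*$-type limit of $E_1^{(t)}|_H$. Either route should produce $y_n$ with $\|Ay_n\|$ summable. Then $d_+(J)\le e+4\delta$, and hence $\LSF(H)\le\LSF(X)$.
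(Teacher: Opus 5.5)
Your setup is correct and coincides with the paper's. The operators $T_tx=(JPx,t(I-P)x)$, the bound $\inn(T_t)\le\inn(J)$ uniform in $t$, the extension $E$ with $E|_M=T_t|_M$, the subspace $N=P(M)$, and the estimate $\|A|_N\|\le e^2/(t-e)$ all check out. The gap is the final step, which is exactly where the inequality has to be proved. Neither of your two routes works as stated:
\begin{itemize}
\item For one fixed $t$, nothing prevents $A|_N$ from being bounded below. So the construction of Lemma~\ref{lem:kernel} cannot produce $y_n$ with $\sum\|Ay_n\|$ small.
\item Changing $t$ changes $M$, $N$ and $E$ simultaneously, so there is no single operator to correct.
\item Weights $t_k\to\infty$ along an infinite chain give an unbounded operator, and Lemma~\ref{lem:finite} only absorbs finitely many finite-rank changes.
\item $\cL(H,H_{|\cdot|})$ carries no weak$^*$ topology in which the $E_1^{(t)}|_H$ would have a usable limit.
\end{itemize}
As written, you only know that $J|_N$ agrees, up to an error of norm $\eta_t$, with the restriction of an operator of norm $e$. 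As you note yourself, small restrictions need not have cheap extensions.

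The missing observation is that the error is not an arbitrary small map on $N$: it is the restriction of a globally defined operator on $H$. Write $E_{11}=E_1|_H$, $E_{12}=E_1|_G$, $E_{21}=E_2|_H$, $E_{22}=E_2|_G$; each has norm at most $e$. For $t>e$ the operator $D=tI_G-E_{22}$ is invertible with $\|D^{-1}\|\le(t-e)^{-1}$. Take $x\in M$, $y=Px$ and $z=(I-P)x$. The second coordinate of $Ex=T_tx$ reads $E_{21}y+E_{22}z=tz$, so $z=D^{-1}E_{21}y$. The first coordinate then gives
\[
 Jy=E_{11}y+E_{12}D^{-1}E_{21}y\qquad(y\in N).
\]
Thus $A|_N=(E_{12}D^{-1}E_{21})|_N$, and $E_{11}+E_{12}D^{-1}E_{21}\in\cL(H,H_{|\cdot|})$ extends $J|_N$ exactly, with norm at most $e+e^2/(t-e)$. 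Proposition~\ref{prop:extension} gives $d_+(J)\le e+e^2/(t-e)$; now let $t\to\infty$ and $\delta\to0$.

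This is the paper's proof rewritten in extension language. The paper applies the same Schur complement directly to a perturbation $E$ for which $T_t-E$ is not upper semi-Fredholm. That yields $d_+(T_t)\ge t\,d_+(T)/(t+d_+(T))$ for an arbitrary $T:H\to Y$, so neither the renorming reduction nor the extension formula is needed there.
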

\begin{proof}
Choose a bounded projection $P$ onto $H$, write $Z=\ker P$, and for $T:H\to Y$ and $t>0$ define
\[
 T_tx=(TPx,t(I-P)x)\in Y\oplus_\infty Z.
\]
Then $\inn(T_t)\le\inn(T)$. For a perturbation $E:X\to Y\oplus_\infty Z$ of norm $r<t$, all four blocks obtained by restricting to $H$ or $Z$ and projecting on target coordinates have norm at most $r$. The block $D=tI_Z-E_{22}$ is invertible and $\|D^{-1}\|\le(t-r)^{-1}$. Triangular elimination shows that $T_t-E$ is upper semi-Fredholm if and only if its Schur complement
\[
 T-E_{11}-E_{12}D^{-1}E_{21}
\]
is upper semi-Fredholm. If it is not, then $d_+^{H,Y}(T)\le r+r^2/(t-r)$. Perturbations with $r\ge t$ also satisfy the resulting lower bound, so
\[
 d_+^{X,Y\oplus_\infty Z}(T_t)
 \ge\frac{t\,d_+^{H,Y}(T)}{t+d_+^{H,Y}(T)}.
\]
If $\LSF(X)=C<\infty$, combine this with $d_+(T_t)\le C\inn(T_t)\le C\inn(T)$ and let $t\to\infty$. The case $\LSF(X)=\infty$ is immediate.
\end{proof}

No factor involving $\|P\|$ appears in this estimate. Finiteness is also invariant under isomorphisms: if $U:X\to W$ has distortion $D=\|U\|\|U^{-1}\|$, composing both operators and perturbations gives
\[
 D^{-1}\LSF(W)\le\LSF(X)\le D\LSF(W).
\]

\section{The lifting obstruction}\label{sec:lifting}

For a closed subspace $H\subset X$ let $q_H:X\to X/H$ be the quotient map. If $(X,H)$ has the lifting property, the map $\cL(X)\to\cL(X,X/H)$, $L\mapsto q_HL$, is a bounded linear surjection, so by the open mapping theorem there is a constant $C$ such that every $T\in\cL(X,X/H)$ has a lifting $L$ with $q_HL=T$ and $\|L\|\le C\|T\|$. Any such $C$ is called a lifting constant of $(X,H)$. We may and shall choose $C\ge1$; it is fixed independently of the renorming parameter below.

For $0<\varepsilon\le1$ define the equivalent norm
\begin{equation}\label{eq:norm-eps}
 |x|_\varepsilon=\max\{\varepsilon\|x\|,\ \|q_Hx\|\}\qquad(\varepsilon\|x\|\le|x|_\varepsilon\le\|x\|),
\end{equation}
and let $J_\varepsilon:X\to X_\varepsilon=(X,|\cdot|_\varepsilon)$ be the identity. Since $|h|_\varepsilon=\varepsilon\|h\|$ for $h\in H$, the lower inequality in \eqref{eq:norm-eps} and restriction to $H$ give
\begin{equation}\label{eq:in-eps}
 \inn(J_\varepsilon)=\varepsilon.
\end{equation}

\begin{lemma}\label{lem:key}
Let $H\in\sub(X)$ be uniformly subprojective with constant $K$, and let $C$ be a lifting constant of $(X,H)$. Let $0<\varepsilon\le1$ and $0<r<1/C$, and put
\[
 \theta=\frac{KCr\,(r/\varepsilon+Cr)}{1-Cr}.
\]
If $d_+(J_\varepsilon)<r$ and $\theta<1$, then some $N\in\sub(H)$ satisfies
\[
 \lambda_X(N)\le\frac{K\,(r/\varepsilon+Cr)}{1-\theta}.
\]
\end{lemma}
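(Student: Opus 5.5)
The plan is to turn a small extension of $J_\varepsilon$ into a projection onto a subspace of $H$. The ingredients are the extension formula (Proposition~\ref{prop:extension}), the lifting constant $C$, and the uniform subprojectivity of $H$.

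\emph{Step 1 (small extension).} Since $d_+(J_\varepsilon)<r$, Proposition~\ref{prop:extension} gives some $M\in\sub(X)$ and $E\in\cL(X,X_\varepsilon)$ with $Ex=x$ for $x\in M$ and $\|E\|_{X\to X_\varepsilon}<r$. By the definition \eqref{eq:norm-eps} of $|\cdot|_\varepsilon$ we have $\|q_Hy\|\le|y|_\varepsilon$ and $\|y\|\le\varepsilon^{-1}|y|_\varepsilon$. Hence $q_HE\in\cL(X,X/H)$ has norm at most $r$, and $E$, regarded as an operator on $(X,\|\cdot\|)$, has norm at most $r/\varepsilon$.

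\emph{Step 2 (lifting and correction into $H$).} Lift $q_HE$ to some $L\in\cL(X)$ with $q_HL=q_HE$ and $\|L\|\le Cr<1$. Put $A=E-L$. Then $q_HA=0$, so $A$ maps $X$ into $H$, and
\[
\|A\|\le a:=r/\varepsilon+Cr .
\]
For $m\in M$ we have $Am=m-Lm=(I-L)m$. Since $\|L\|<1$, the operator $I-L$ is invertible with $\|(I-L)^{-1}\|\le(1-Cr)^{-1}$. Therefore $H_0=(I-L)M$ is a closed infinite-dimensional subspace of $H$.

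\emph{Step 3 (building the projection).} Uniform subprojectivity of $H$ with constant $K$ gives some $N\in\sub(H_0)\subset\sub(H)$ and a projection $Q\colon H\to N$ with $\|Q\|\le K$. Define $P=QA(I-L)^{-1}\in\cL(X)$, which takes values in $N$. For $n\in N$ write $n=(I-L)m$ with $m\in M$. Then
\[
A(I-L)^{-1}n=Am=(I-L)m=n,
\]
so $Pn=Qn=n$, and $P$ is a projection of $X$ onto $N$. This yields $\lambda_X(N)\le Ka/(1-Cr)$.

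\emph{Step 4 (matching the stated constant).} The bound $Ka/(1-Cr)$ from Step~3 is not obviously comparable with the stated $Ka/(1-\theta)$, where $\theta=KCr\,a/(1-Cr)$. This comparison is the main point to check. Should the stated form be needed exactly, I would redo Step~3 without inverting $I-L$ on all of $X$. Start from $P_0=QA\colon X\to N$, whose norm is at most $Ka$. For $n=(I-L)m\in N$, compute
\[
P_0n=QAm-QALm=n-QALm .
\]
The error satisfies
\[
\|QALm\|\le Ka\,Cr\,\|m\|\le\frac{KCr\,a}{1-Cr}\,\|n\|=\theta\|n\| .
\]
Thus $\|P_0|_N-I_N\|\le\theta<1$, so $P_0|_N$ is invertible on $N$ with inverse of norm at most $(1-\theta)^{-1}$. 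Then $(P_0|_N)^{-1}P_0$ is a projection onto $N$ of norm at most $Ka/(1-\theta)$, which is exactly the stated estimate. The only delicate point is keeping track of which norm ($\|\cdot\|$ or $|\cdot|_\varepsilon$) each operator bound refers to. The rest is Neumann series bookkeeping.
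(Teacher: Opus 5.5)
Your argument is correct, and your Step 4 is exactly the paper's proof. The paper also forms $A=E-L$, picks $N\in\sub(A(M))$ with $\|Q\|\le K$, bounds the error term $QALx$ by $\theta\|y\|$, and takes $(QA|_N)^{-1}QA$.

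Your Step 3 is a genuinely different route, and you undersold it: it already proves the lemma, so Step 4 is redundant. A projection onto a nonzero subspace has norm at least $1$. So the projection $P=QA(I-L)^{-1}$ forces $K(r/\varepsilon+Cr)\ge1-Cr$. Hence $\theta=Cr\cdot K(r/\varepsilon+Cr)/(1-Cr)\ge Cr$, so $0<1-\theta\le1-Cr$, and therefore $K(r/\varepsilon+Cr)/(1-Cr)\le K(r/\varepsilon+Cr)/(1-\theta)$.

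The two routes differ as follows. The paper inverts only the compression $QA|_N$, which leaves the quadratic error $QAL$ and needs $\theta<1$. You invert $I-L$ on all of $X$, so $A(I-L)^{-1}$ is exactly the identity on $(I-L)M$, and only $Cr<1$ is used.

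This is a real gain. When $a_X(H)=\infty$, your Step 3 gives $d_+(J_\varepsilon)\ge1/C$ for every $0<\varepsilon\le1$. This replaces the bound $r_\varepsilon$ of order $\sqrt{\varepsilon/(KC)}$ in Theorem~\ref{thm:lifting}(b). In the finite case it improves Proposition~\ref{prop:finite-obstruction} to $d_+(J_\varepsilon)\ge a/(aC+K(\varepsilon^{-1}+C))$.

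For the Gaussian renorming of Corollary~\ref{cor:L1explicit} one has $C=1$ and $d_+(J_\varepsilon)\le\|J_\varepsilon\|\le1$. Your argument then gives $d_+(J_\varepsilon)=1$ exactly, which answers the asymptotic question raised in the remark after that corollary. The paper's compression argument buys nothing here that your global inversion does not.
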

\begin{proof}
By Proposition~\ref{prop:extension} there are $M\in\sub(X)$ and $E\in\cL(X,X_\varepsilon)$ with $E|_M=J_\varepsilon|_M$ and $|Ex|_\varepsilon\le r\|x\|$ for all $x$. Regard $E$ as an operator on $X$. Then $Ex=x$ for $x\in M$, and by \eqref{eq:norm-eps}
\[
 \|E\|\le r/\varepsilon,\qquad \|q_HE\|\le r.
\]
Let $L\in\cL(X)$ be a lifting of $q_HE$ with $\|L\|\le Cr$, and put $A=E-L$. Then $q_HA=0$, so $A$ maps $X$ into $H$, and $\|A\|\le r/\varepsilon+Cr$. For $x\in M$ we have $Ax=x-Lx$ and $\|Ax\|\ge(1-Cr)\|x\|$; hence $A(M)$ is a closed infinite-dimensional subspace of $H$. Choose $N\in\sub(A(M))$ and a projection $Q$ of $H$ onto $N$ with $\|Q\|\le K$.

Let $y\in N$ and write $y=Ax$ with $x\in M$, so that $\|x\|\le\|y\|/(1-Cr)$. Since $Ax=x-Lx$,
\[
 Ay=A(x-Lx)=y-ALx,\qquad QAy=y-QALx,
\]
and $\|QALx\|\le K\|A\|\,Cr\,\|x\|\le\theta\|y\|$. Thus $B=QA|_N\in\cL(N)$ satisfies $\|B-I_N\|\le\theta<1$, so $B$ is invertible with $\|B^{-1}\|\le(1-\theta)^{-1}$. The operator $R=B^{-1}QA:X\to N$ satisfies $Ry=y$ for $y\in N$; it is a projection of $X$ onto $N$ with $\|R\|\le K(r/\varepsilon+Cr)/(1-\theta)$.
\end{proof}

\begin{theorem}\label{thm:lifting}
Let $H\in\sub(X)$ be uniformly subprojective with an admissible constant $K$, and suppose that $(X,H)$ has the lifting property, with lifting constant $C$. Then:
\begin{enumerate}[label=\textup{(\alph*)}]
\item $\displaystyle\liminf_{\varepsilon\downarrow0}\frac{d_+(J_\varepsilon)}{\varepsilon}\ge\frac{a_X(H)}{K}$, and consequently $\LSF(X)\ge a_X(H)/K$;
\item if $a_X(H)=\infty$, then, for $0<\varepsilon\le1$,
\begin{equation}\label{eq:root}
 d_+(J_\varepsilon)\ge r_\varepsilon
 :=\frac{2}{C+\sqrt{C^2+4KC(\varepsilon^{-1}+C)}}.
\end{equation}
In particular $\inn(J_\varepsilon)=\varepsilon$ and
\begin{equation}\label{eq:sqrt}
 \liminf_{\varepsilon\downarrow0}\frac{d_+(J_\varepsilon)}{\sqrt\varepsilon}
 \ge\frac1{\sqrt{KC}},\qquad
 \frac{d_+(J_\varepsilon)}{\inn(J_\varepsilon)}\longrightarrow\infty.
\end{equation}
\end{enumerate}
\end{theorem}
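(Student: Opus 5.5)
Both parts follow from Lemma~\ref{lem:key}, applied with $r$ slightly above $d_+(J_\varepsilon)$ and read in the contrapositive. Combined with the identity $\inn(J_\varepsilon)=\varepsilon$ from \eqref{eq:in-eps} and Theorem~\ref{thm:renorm}, this yields the statements about $\LSF(X)$.

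For (a), suppose the liminf is some $s<a_X(H)/K$; we may assume $s<\infty$. Fix $s'$ with $s<s'<a_X(H)/K$. Choose $\varepsilon_k\downarrow0$ and $r_k=s'\varepsilon_k$ with $d_+(J_{\varepsilon_k})<r_k$. Then $r_k\to0$, so $Cr_k<1$ eventually. We also have $r_k/\varepsilon_k+Cr_k=s'+Cs'\varepsilon_k\to s'$ and $\theta_k=KCr_k(s'+Cs'\varepsilon_k)/(1-Cr_k)\to0$. Lemma~\ref{lem:key} then produces $N_k\in\sub(H)$ with
\[
\lambda_X(N_k)\le\frac{K(s'+Cs'\varepsilon_k)}{1-\theta_k}\longrightarrow Ks'.
\]
Since $a_X(H)\le\lambda_X(N_k)$, we get $a_X(H)\le Ks'<a_X(H)$, a contradiction. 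The consequence for $\LSF$ is then immediate. By \eqref{eq:renorm}, $\LSF(X)\ge d_+(J_\varepsilon)/\inn(J_\varepsilon)=d_+(J_\varepsilon)/\varepsilon$ for every $\varepsilon$, so taking the liminf gives $\LSF(X)\ge a_X(H)/K$. The case $a_X(H)=\infty$ is included, with the convention that $a_X(H)/K=\infty$.

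For (b), assume $a_X(H)=\infty$. Then no $N\in\sub(H)$ has finite $\lambda_X(N)$, so the conclusion of Lemma~\ref{lem:key} can never hold. Hence for every $r<1/C$ with $\theta(r)<1$ we must have $d_+(J_\varepsilon)\ge r$. The condition $\theta<1$ reads $KCr^2(\varepsilon^{-1}+C)<1-Cr$, that is,
\[
KC(\varepsilon^{-1}+C)r^2+Cr-1<0.
\]
The positive root of this quadratic is exactly $r_\varepsilon$ as given in \eqref{eq:root}. Every $r\in(0,r_\varepsilon)$ satisfies the inequality, and any such $r$ automatically has $Cr<1$, since $Cr<1-KC(\dots)r^2$. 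So $d_+(J_\varepsilon)\ge r$ for all $r<r_\varepsilon$, which gives \eqref{eq:root}. The asymptotics \eqref{eq:sqrt} follow because $r_\varepsilon\sqrt{\varepsilon}^{-1}\to 2/\sqrt{4KC}=1/\sqrt{KC}$ as $\varepsilon\downarrow0$. Dividing by $\inn(J_\varepsilon)=\varepsilon$ then gives a ratio of order $\varepsilon^{-1/2}\to\infty$.

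The only delicate point is the bookkeeping in (a). One must make sure that $\theta_k<1$ and $Cr_k<1$ hold along the chosen sequence, and handle the possibility that the liminf is $0$ or that $a_X(H)=\infty$. Choosing $r_k$ proportional to $\varepsilon_k$ settles both, because $r_k\to0$ and $r_k/\varepsilon_k$ stays bounded. The rest is the algebraic identification of $r_\varepsilon$ as the root of $\theta=1$.
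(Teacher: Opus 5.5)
Your proposal is correct and follows essentially the same route as the paper. For (b) you apply Lemma~\ref{lem:key} contrapositively for every $r$ below the positive root $r_\varepsilon$ of $KC(\varepsilon^{-1}+C)r^2+Cr=1$; for (a) you take $r$ proportional to $\varepsilon$. The only cosmetic differences are that you treat $a_X(H)=\infty$ directly in (a) instead of reducing it to (b), and that you argue along a sequence $\varepsilon_k\downarrow0$ rather than "for all small $\varepsilon$".
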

\begin{proof}
For (b), the conditions $Cr<1$ and $\theta<1$ in Lemma~\ref{lem:key} are both satisfied when
\[
 KC(\varepsilon^{-1}+C)r^2+Cr<1.
\]
The positive root of the corresponding equation is $r_\varepsilon$. If $0<r<r_\varepsilon$ and $d_+(J_\varepsilon)<r$, the lemma produces a complemented infinite-dimensional subspace of $H$, contrary to $a_X(H)=\infty$. Letting $r\uparrow r_\varepsilon$ proves \eqref{eq:root}, and its asymptotics give \eqref{eq:sqrt}.

For (a), we may by (b) assume $a=a_X(H)<\infty$. Fix $0<c<a/K$ and put $r=c\varepsilon$. As $\varepsilon\downarrow0$,
\[
 Cr\to0,\quad
 \theta=\frac{KCc\varepsilon(c+Cc\varepsilon)}{1-Cc\varepsilon}\to0,\quad
 \frac{K(c+Cc\varepsilon)}{1-\theta}\to Kc<a.
\]
For small $\varepsilon$, the inequality $d_+(J_\varepsilon)<c\varepsilon$ would give an $N\in\sub(H)$ with $\lambda_X(N)<a$, which is impossible. Thus $d_+(J_\varepsilon)\ge c\varepsilon$ for small $\varepsilon$, and $c<a/K$ was arbitrary. Use \eqref{eq:in-eps} and Theorem~\ref{thm:renorm} to obtain the bound for $\LSF(X)$.
\end{proof}

The finite obstruction also has a bound at each value of $\varepsilon$, rather than only an asymptotic bound.

\begin{proposition}\label{prop:finite-obstruction}
Under the hypotheses of Theorem~\ref{thm:lifting}, suppose $a=a_X(H)<\infty$, and put $b_\varepsilon=\varepsilon^{-1}+C$. Then
\begin{equation}\label{eq:finite-root}
 d_+(J_\varepsilon)\ge
 \rho_\varepsilon(a):=
 \frac{2a}{aC+Kb_\varepsilon+
 \sqrt{(aC+Kb_\varepsilon)^2+4aKC b_\varepsilon(a-1)}}.
\end{equation}
Moreover $\rho_\varepsilon(a)/\varepsilon\to a/K$ as $\varepsilon\downarrow0$, and $\rho_\varepsilon(a)\to r_\varepsilon$ as $a\to\infty$.
\end{proposition}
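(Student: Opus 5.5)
The plan is to rerun the contradiction argument from the proof of Theorem~\ref{thm:lifting}(b), this time with the finite value $a=a_X(H)$ in place of $\infty$. Write $b=b_\varepsilon=\varepsilon^{-1}+C$. Then $r/\varepsilon+Cr=rb$, and Lemma~\ref{lem:key} reads as follows: if $Cr<1$, $\theta=KCbr^2/(1-Cr)<1$ and $d_+(J_\varepsilon)<r$, then some $N\in\sub(H)$ has
\[
 a\le\lambda_X(N)\le\frac{Krb}{1-\theta}.
\]
So it suffices to show that for every $0<r<\rho_\varepsilon(a)$ the three conditions $Cr<1$, $\theta<1$ and
\[
 Krb<a(1-\theta)
\]
all hold. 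Then $d_+(J_\varepsilon)<r$ is impossible, and letting $r\uparrow\rho_\varepsilon(a)$ gives \eqref{eq:finite-root}.

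The algebra is the step to organize carefully. Assuming $Cr<1$ and multiplying by $1-Cr$, the desired inequality $Krb<a(1-\theta)$ becomes
\[
 Krb(1-Cr)<a(1-Cr)-aKCbr^2 .
\]
Rearranged, this is the quadratic condition
\[
 p(r):=KCb(a-1)r^2+(aC+Kb)r-a<0 .
\]
Since $a\ge1$, the leading coefficient is nonnegative and $p(0)=-a<0$, so $p<0$ exactly on $[0,r_+)$, where $r_+$ is the positive root. Rationalizing the quadratic formula gives
\[
 r_+=\frac{2a}{aC+Kb+\sqrt{(aC+Kb)^2+4aKCb(a-1)}}=\rho_\varepsilon(a).
\]
When $a=1$ this reduces to the linear root $a/(aC+Kb)$, consistent with the formula.

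Next I would check that the side conditions are automatic on $0<r<\rho_\varepsilon(a)$.
\begin{enumerate}[label=\textup{(\roman*)},nosep]
\item Dropping the nonnegative square-root contribution gives $\rho_\varepsilon(a)\le a/(aC+Kb)<1/C$, so $Cr<1$.
\item Once $Cr<1$, the inequality $p(r)<0$ says $a(1-Cr-KCbr^2)>Krb(1-Cr)>0$. Hence $1-Cr>KCbr^2$, which is exactly $\theta<1$.
\end{enumerate}
This step, ensuring the hypotheses of Lemma~\ref{lem:key} are not lost in the rearrangement, is the only delicate point. The order of deduction above handles it.

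Finally, the two limits.
\begin{enumerate}[label=\textup{(\roman*)},nosep]
\item As $\varepsilon\downarrow0$ we have $\varepsilon b\to1$. Multiplying the denominator of $\rho_\varepsilon(a)$ by $\varepsilon$ gives
\[
 \varepsilon\Bigl(aC+Kb+\sqrt{(aC+Kb)^2+4aKCb(a-1)}\Bigr)\to K+\sqrt{K^2}=2K,
\]
because the terms $aC\varepsilon$ and $4aKCb(a-1)\varepsilon^2$ vanish. Hence $\rho_\varepsilon(a)/\varepsilon\to 2a/(2K)=a/K$.
\item For fixed $\varepsilon$, dividing numerator and denominator of $\rho_\varepsilon(a)$ by $a$ gives
\[
 \rho_\varepsilon(a)=\frac{2}{C+Kb/a+\sqrt{(C+Kb/a)^2+4KCb(1-1/a)}} .
\]
This tends to $2/\bigl(C+\sqrt{C^2+4KCb}\bigr)=r_\varepsilon$ as $a\to\infty$, matching \eqref{eq:root}.
\end{enumerate}
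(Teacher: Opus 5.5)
Your proof is correct and follows the paper's argument: a contradiction via Lemma~\ref{lem:key}, rearrangement of $a\le Kb_\varepsilon r/(1-\theta)$ into the quadratic whose positive root is $\rho_\varepsilon(a)$, and the two limits read off from the formula. The only difference is bookkeeping. The paper secures the lemma's hypotheses by taking $r<r_\varepsilon$ and checking $\rho_\varepsilon(a)<r_\varepsilon$, whereas you derive $Cr<1$ and $\theta<1$ directly from $p(r)<0$. One small point of phrasing: your bound $\rho_\varepsilon(a)\le a/(aC+Kb)$ holds because the square root is at least $aC+Kb$ when $a\ge1$, not because the square-root term can simply be dropped from the denominator.
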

\begin{proof}
We have $a\ge1$. For $r<r_\varepsilon$, Lemma~\ref{lem:key}, if applicable, gives
\[
 a\le \frac{K b_\varepsilon r(1-Cr)}
 {1-Cr-KC b_\varepsilon r^2}.
\]
Since the denominator is positive, this implies
\[
 a\le(aC+Kb_\varepsilon)r+KC b_\varepsilon(a-1)r^2.
\]
The positive solution of equality is $\rho_\varepsilon(a)$ (also when $a=1$). It is strictly smaller than $r_\varepsilon$, as is seen by substituting $r_\varepsilon$ into the last polynomial. Therefore $d_+(J_\varepsilon)<\rho_\varepsilon(a)$ would allow an $r$ strictly between these numbers and contradict that inequality. The limits follow from the displayed formula.
\end{proof}

If $H$ is complemented by a projection $P$, the formula $s(q_Hx)=(I-P)x$ defines a bounded right inverse of $q_H$ with $\|s\|\le\|I-P\|$, and $L=sT$ lifts any $T\in\cL(X,X/H)$. Theorem~\ref{thm:lifting} therefore contains the obstruction by complemented witnesses; note that the asymptotic bound does not depend on $\|P\|$.

\begin{corollary}\label{cor:complemented}
If $H\in\sub(X)$ is complemented in $X$ and uniformly subprojective for its inherited norm, then $a_X(H)\le\LUSP(H)\,\LSF(X)$. In particular, $\LSF(X)\ge a_X(H)$ whenever $H$ is a complemented subspace isometric to a Hilbert space, and this bound is attained on Hilbert spaces.
\end{corollary}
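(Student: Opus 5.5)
The plan is to apply Theorem~\ref{thm:lifting}(a) with the complementation supplying the lifting property, and then to check the Hilbert case against Proposition~\ref{prop:subproj}.

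\emph{Main estimate.} Since $H$ is complemented by a projection $P$, the remark preceding the corollary gives a right inverse $s$ of $q_H$ with $\|s\|\le\|I-P\|$. Hence every $T\in\cL(X,X/H)$ lifts to $sT$, so $(X,H)$ has the lifting property with some lifting constant $C$.

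\emph{Admissibility.} If $\LUSP(H)=\infty$ the inequality $a_X(H)\le\LUSP(H)\LSF(X)$ is trivial, since $\LSF(X)\ge1$. Otherwise every $K>\LUSP(H)$ is admissible for $H$ with its inherited norm, so $H$ is uniformly subprojective with constant $K$. Theorem~\ref{thm:lifting}(a) then gives $\LSF(X)\ge a_X(H)/K$, i.e. $a_X(H)\le K\,\LSF(X)$. Letting $K\downarrow\LUSP(H)$ yields $a_X(H)\le\LUSP(H)\,\LSF(X)$. The case $a_X(H)=\infty$ is included, because then Theorem~\ref{thm:lifting}(b) forces $\LSF(X)=\infty$. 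The asymptotic bound in Theorem~\ref{thm:lifting}(a) does not depend on $C$, which is why no factor $\|P\|$ appears.

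\emph{Hilbert case.} If $H$ is isometric to a Hilbert space, Proposition~\ref{prop:subproj} gives $\LUSP(H)=1$, hence $\LSF(X)\ge a_X(H)$. For attainment, take $X$ itself a Hilbert space and $H=X$. Then $a_X(X)=1$, because orthogonal projections have norm one. Also $\LSF(X)\le\LUSP(X)=1$, while $\LSF(X)\ge1$ by definition. So equality $\LSF(X)=a_X(H)=1$ holds.

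The only point needing care is the infimum over non-attained admissible constants. It is handled by applying the theorem for each $K>\LUSP(H)$ and passing to the limit, so I expect no real obstacle.
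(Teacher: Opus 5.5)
Your proposal is correct and follows the paper's argument. The lifting property comes from the complementation remark, Theorem~\ref{thm:lifting}(a) is applied with every admissible $K>\LUSP(H)$ before letting $K\downarrow\LUSP(H)$, and the Hilbert case, including attainment where both sides equal $1$, comes from Proposition~\ref{prop:subproj}. Your extra bookkeeping for the infinite cases is harmless, and it matches how part (a) already absorbs $a_X(H)=\infty$ through part (b).
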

\begin{proof}
Apply Theorem~\ref{thm:lifting}(a) with every admissible $K>\LUSP(H)$. On a Hilbert space both sides equal $1$ by Proposition~\ref{prop:subproj}.
\end{proof}

\section{Consequences: \texorpdfstring{$\mathcal L_1$}{L1}-spaces and a conditional converse}\label{sec:converse}

\subsection{\texorpdfstring{$\mathcal L_1$}{L1}-spaces}

A Banach space $X$ is an $\mathcal L_1$-space if there is $\sigma<\infty$ such that every finite-dimensional subspace is contained in a finite-dimensional subspace $F$ with $d(F,\ell_1^{\dim F})<\sigma$. This is a local condition; the embedding into a function space needed below is recorded explicitly.

\begin{lemma}\label{lem:embedding}
Every $\mathcal L_1$-space admits an isomorphic embedding $u:X\to L_1(\mu)$ of finite distortion $D=\|u\|\|u^{-1}|_{u(X)}\|$. One may take $D\le\sigma$ for every strictly admissible local constant $\sigma$.
\end{lemma}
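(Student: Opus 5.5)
The plan is to embed $X$ into an ultraproduct of finite-dimensional $\ell_1$-spaces, which is an $L_1(\mu)$-space, and to read off the distortion directly from the local constant. Fix $\sigma$ strictly admissible, so every finite-dimensional subspace $G\subset X$ lies in some finite-dimensional $F_G$ with an isomorphism $v_G:F_G\to\ell_1^{\dim F_G}$ satisfying $\|v_G\|\le1$ and $\|v_G^{-1}\|<\sigma$; rescaling puts all the distortion on the inverse. I index by the directed set $\mathcal I$ of finite-dimensional subspaces $G$ of $X$, ordered by inclusion, and take a free ultrafilter $\mathcal U$ on $\mathcal I$ refining the order filter, so that the sets $\{G'\supset G\}$ belong to $\mathcal U$.

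Next, define $u:X\to Z=(\prod_{G}\ell_1^{\dim F_G})_{\mathcal U}$ by $u(x)=(v_G(x))_{\mathcal U}$, where $v_G(x)$ is interpreted as $0$ when $x\notin F_G$. For each fixed $x$ we have $x\in G\subset F_G$ for $\mathcal U$-almost all $G$, so $u$ is linear. The estimates $\|v_G\|\le1$ and $\|v_G^{-1}\|<\sigma$ give $\sigma^{-1}\|x\|\le\|u(x)\|\le\|x\|$, hence the distortion satisfies $D\le\sigma$.

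It then remains to identify $Z$ with an $L_1(\mu)$, and this is the only step that relies on an external fact. An ultraproduct of $L_1$-spaces is an abstract $L$-space: it is a Banach lattice under the coordinatewise order, and the norm is additive on positive disjoint elements because this holds in each coordinate and passes to ultralimits. Kakutani's representation theorem then gives $Z\cong L_1(\mu)$ isometrically for some measure $\mu$, which can be taken on a large, non-$\sigma$-finite space. I expect this identification to be the main obstacle, since it requires the lattice structure of the ultraproduct, namely that the ultrapower of $\ell_1^n$-lattices is again a Banach lattice satisfying the $L$-space identity. I would cite Heinrich's ultraproduct paper and Kakutani for it rather than reprove them.

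A weaker alternative would avoid ultraproducts entirely. Lindenstrauss--Pe\l czy\'nski show that $X^{**}$ is complemented in an $L_1(\mu)$, and $X$ embeds in $X^{**}$ isometrically. The catch is that this route gives only a distortion bound up to the projection constant, and so does not yield $D\le\sigma$. For that reason I would use the ultraproduct construction above.
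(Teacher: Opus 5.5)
Your proposal is correct and follows essentially the same route as the paper: an ultraproduct of the local $\ell_1^{\dim F}$ blocks, the embedding $x\mapsto(U_Fx)_{\mathcal U}$ (set to zero off $F$), the bound $D\le\sigma$ read off from the normalization, and Kakutani's theorem to identify the ultraproduct with an $L_1(\mu)$. Two small additions: over $\mathbb C$, apply Kakutani to the underlying real lattice and complexify, as the paper does; and state the $L$-space identity as additivity of the norm on the whole positive cone (the paper's formulation), which passes to the ultraproduct as easily as the disjoint version you state.
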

\begin{proof}
Let the index set consist of all finite-dimensional local subspaces $F$, each equipped with a chosen isomorphism $U_F:F\to\ell_1^{\dim F}$ normalized so that
\[
 \|x\|\le\|U_Fx\|_1\le\sigma\|x\|\qquad(x\in F).
\]
Choose an ultrafilter containing all sets $\{F:F\supset F_0\}$, for finite-dimensional $F_0\subset X$. In the Banach-space ultraproduct of the spaces $\ell_1^{\dim F}$, define
\[
 ux=[(z_F(x))_F],\qquad z_F(x)=
 \begin{cases}U_Fx,&x\in F,\\0,&x\notin F.\end{cases}
\]
The map is linear because every finite set of vectors lies in the same $F$ on a set belonging to the ultrafilter; the two norm inequalities pass to the ultralimit. The Banach-lattice ultraproduct is an abstract $L_1$ lattice: its norm is additive on the positive cone. Kakutani's representation theorem \cite{Kak41} identifies it isometrically with an $L_1(\mu)$. Over $\mathbb C$, take the ultraproduct as a complex Banach lattice and apply the representation to its associated real lattice; its complexification gives complex $L_1(\mu)$. This proves the assertion without a separability assumption.
\end{proof}

We shall use Lindenstrauss' lifting principle in the following form: if $F$ is an $\mathcal L_1$-space and $Q:Y\to E$ is a surjection whose kernel is complemented in its own bidual, then every operator $F\to E$ lifts to $Y$ \cite[Proposition 2.1]{KP97}. In particular, for a reflexive $H\subset X$, every operator $X\to X/H$ lifts to $X$.

For a reflexive kernel, the compactness argument also controls the norm of the lifting. The quotient below carries its usual quotient norm.

\begin{lemma}\label{lem:contractive-lifting}
Let $F$ be an $\mathcal L_1$-space with a strictly admissible local constant $\sigma$, and let $H$ be a reflexive closed subspace of $Y$. Every $T:F\to Y/H$ has a lifting $L:F\to Y$ with $\|L\|\le\sigma\|T\|$. If $F=L_1(\mu)$, the lifting can be chosen with $\|L\|=\|T\|$.
\end{lemma}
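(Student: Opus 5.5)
The plan is to prove the finite-dimensional statement first and pass to $F$ by a compactness argument that uses reflexivity of $H$. Fix $T:F\to Y/H$ with $\|T\|=1$ and $\delta>0$. For every finite-dimensional local subspace $G\subset F$ with $U_G:G\to\ell_1^n$ normalized as in Lemma~\ref{lem:embedding} ($\|x\|\le\|U_Gx\|_1\le\sigma\|x\|$), we build a lifting on $G$. Let $g_i=U_G^{-1}e_i$, so $\|g_i\|\le1$. For each $i$ choose $y_i\in Y$ with $q_Hy_i=Tg_i$ and $\|y_i\|\le(1+\delta)\|Tg_i\|\le1+\delta$, and define $L_G:G\to Y$ by $L_Gg_i=y_i$. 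Then $q_HL_G=T|_G$, and for $x=\sum a_ig_i$ we get
\[
\|L_Gx\|\le(1+\delta)\sum|a_i|=(1+\delta)\|U_Gx\|_1\le(1+\delta)\sigma\|x\|.
\]
So every local subspace carries a lifting of norm at most $(1+\delta)\sigma$.

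To glue these, fix once and for all a linear (not necessarily bounded) lifting $\ell:F\to Y$ of $T$, that is, $q_H\ell=T$; it comes from a Hamel basis. Write $L_G=\ell|_G+A_G$, where $A_G:G\to H$. The task is to show that the net $A_G$, indexed by the ultrafilter of Lemma~\ref{lem:embedding}, converges pointwise. A cleaner route avoids $\ell$ altogether. Consider the product $\prod_{x\in F}\bigl(\|x\|\sigma(1+\delta)B_{Y^{**}}\bigr)$ with the weak$^*$ topology, which is compact. Extend each $L_G$ by $0$ off $G$, and take the ultralimit $\tilde L(x)=w^*\text{-}\lim_G L_Gx\in Y^{**}$. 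The map $\tilde L$ is linear, since any finitely many vectors lie in $G$ eventually, and $\|\tilde L\|\le(1+\delta)\sigma$. It remains to show that $\tilde L(x)\in Y$. For fixed $x$, pick $G_0\ni x$; then for $G\supset G_0$ we have $L_Gx-L_{G_0}x\in H\cap(2(1+\delta)\sigma\|x\|)B_Y$. Because $H$ is reflexive, this bounded subset of $H$ is weakly compact, so its weak$^*$ limit lies in $H$. Hence $\tilde Lx\in L_{G_0}x+H\subset Y$. Since $q_H$ is weak-to-weak continuous, $q_H\tilde Lx=Tx$. Letting $\delta\to0$ would still require a second compactness pass. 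Instead, note that the open condition "strictly admissible" lets us replace $\sigma(1+\delta)$ by $\sigma'$ with $\sigma'(1+\delta)\le\sigma$, choosing $\sigma'$ strictly admissible and slightly below $\sigma$ if the definition allows. Otherwise the same ultralimit, taken over pairs $(G,\delta)$ with $\delta\to0$, gives $\|L\|\le\sigma$ directly. We expect this bookkeeping, and in particular the membership $\tilde Lx\in Y$ through reflexivity, to be the main point. The rest is routine.

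For $F=L_1(\mu)$ we use the same scheme, with $G$ ranging over spans of indicator functions of finite measurable partitions, normalized by measure. These are isometric to $\ell_1^n$, so $\sigma=1$ on each $G$, and their union is dense. The construction gives a lifting $L_0$ with $\|L_0\|\le1+\delta$ on the dense subspace of simple functions. Running the ultralimit over pairs $(G,\delta)$ gives $\|L_0\|\le1$, and $L_0$ extends by continuity to $L:F\to Y$. The lifting identity $q_HL=T$ survives the extension because both sides are continuous. Finally, $\|T\|=\|q_HL\|\le\|L\|$ forces $\|L\|=\|T\|$.
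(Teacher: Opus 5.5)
Your proof is correct and follows essentially the paper's route: you lift the images of the $\ell_1^n$ unit vectors on each local subspace, then glue the local liftings by a compactness argument in which reflexivity of $H$ keeps the limit inside $Y$ and on the correct fibre. The differences are only technical: you use $(1+\delta)$-approximate representatives with an ultralimit over pairs $(G,\delta)$ where the paper uses proximinality of the reflexive subspace $H$, and for $L_1(\mu)$ you work on simple functions through partition subspaces isometric to $\ell_1^n$ and extend by continuity, where the paper lets $\sigma\downarrow1$ and takes a second cluster point.
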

\begin{proof}
This is the reflexive-kernel version of the proof in \cite[Proposition 2.1]{KP97}. A reflexive subspace is proximinal: a bounded minimizing sequence in an affine fiber of $q_H$ has a weakly convergent subsequence after translation into $H$, and the norm is weakly lower semicontinuous. Thus each $e\in Y/H$ has a representative of norm $\|e\|$.

For a finite-dimensional local subspace $E\subset F$, choose $U:E\to\ell_1^{\dim E}$ with $\|U^{-1}\|\le1$ and $\|U\|\le\sigma$. Lift each $TU^{-1}e_j$ to a representative $y_j$ of the same norm. Then
\[
 L_Ex=\sum_j(Ux)_j y_j,\qquad
 q_HL_E=T|_E,\qquad \|L_E\|\le\sigma\|T\|.
\]
For each $f\in F$ the set
\[
 K_f=\{y\in Y:q_Hy=Tf,\ \|y\|\le\sigma\|T\|\|f\|\}
\]
is nonempty and weakly compact: after translating a fiber representative, it is a bounded weakly closed subset of the reflexive space $H$. In the compact product $\prod_{f\in F}K_f$, impose all the closed conditions of additivity and scalar homogeneity. Every finite collection of conditions is satisfied by some $L_E$, because one local subspace $E$ contains all vectors involved. The finite intersection property gives a linear lifting with the required norm bound.

For $F=L_1(\mu)$, every $\sigma>1$ is an admissible local constant. Take such liftings with $\sigma\downarrow1$ and a cluster point in the product of the weakly compact fibers with bound $2\|T\|\|f\|$. The limit is linear and has norm at most $\|T\|$. The reverse inequality follows from $\|q_H\|\le1$.
\end{proof}

We also use two classical geometric facts. The spaces $\mathcal L_1$ have the Dunford--Pettis property \cite{Die80}: every weakly compact operator defined on such a space is completely continuous. For a complex $\mathcal L_1$-space, its underlying real space is a real $\mathcal L_1$-space: the realification of $\ell_1^n(\mathbb C)$ has distance at most $\sqrt2$ from $\ell_1^{2n}(\mathbb R)$. Thus the real Dunford--Pettis theorem also gives the complex assertion. The spaces $L_1(\mu)$ are weakly sequentially complete \cite{AK}; hence so are their closed subspaces and, by Lemma~\ref{lem:embedding}, all $\mathcal L_1$-spaces.

\begin{lemma}\label{lem:noreflexive}
No infinite-dimensional reflexive subspace of an $\mathcal L_1$-space is complemented. Every infinite-dimensional reflexive subspace $R$ of such a space contains a subspace $H$ isomorphic to $\ell_r$ for some $1<r\le2$. Thus $H$ is internally uniformly subprojective and $a_X(H)=\infty$.
\end{lemma}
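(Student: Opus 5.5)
The lemma has three assertions, and I will prove them in order.

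\emph{No complemented reflexive subspace.} Suppose $R\subset X$ is infinite-dimensional, reflexive, and complemented by a projection $P$. Since $P:X\to R$ maps into a reflexive space, it is weakly compact. By the Dunford--Pettis property of $\mathcal L_1$-spaces, $P$ is then completely continuous. Restricted to $R$, it gives the identity $I_R$, which is therefore also completely continuous. In the reflexive space $R$, every bounded sequence has a weakly convergent subsequence, and $I_R$ sends it to a norm-convergent one. So the unit ball of $R$ is compact, which contradicts $\dim R=\infty$. Over $\mathbb C$ I will run the same argument in the underlying real $\mathcal L_1$-space, as remarked before the lemma.

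\emph{An $\ell_r$ inside $R$.} Lemma~\ref{lem:embedding} lets me regard $R$, up to an isomorphism of finite distortion, as a reflexive subspace of some $L_1(\mu)$. I first reduce to a separable subspace of $R$; it lies in a separable $L_1$ sublattice, hence in $L_1(0,1)$ (or in $\ell_1$, or in a sum of the two). A reflexive subspace of $L_1$ has no $\ell_1$ inside it. By Rosenthal's theorem, a subspace of $L_1$ that does not contain $\ell_1$ embeds in $L_p$ for some $1<p\le 2$, and there it has equi-integrable unit ball. Aldous' theorem states that every infinite-dimensional subspace of $L_1$ contains some $\ell_r$ with $1\le r\le2$. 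Since $R$ contains no $\ell_1$, this gives $r>1$. This invocation of Aldous (with Krivine--Maurey stability) is the deep input, and I cite it rather than reprove it.

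I expect the main obstacle to be checking that the cited results apply in the present generality: non-separable $\mu$, complex scalars, and an isomorphic rather than isometric embedding. Separability is handled by the reduction to a separable subspace. The isomorphic embedding is harmless, because containing an isomorphic copy of $\ell_r$ is invariant under isomorphisms. For complex scalars, I would pass to the realification and then recover a complex $\ell_r$. If that recovery is awkward, I would use the complex form of Aldous' theorem directly.

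\emph{The remaining properties of $H$.} Let $H$ be the copy of $\ell_r$ found above. By Proposition~\ref{prop:subproj}, $\ell_r$ has $\LUSP=1$. By the isomorphism remark in the notation section, $H$ with its inherited norm is then uniformly subprojective with constant at most the distortion of the isomorphism $H\cong\ell_r$. Finally, every $N\in\sub(H)$ is reflexive and infinite-dimensional, so by the first part it is not complemented in $X$. Hence $\lambda_X(N)=\infty$ for all such $N$, which gives $a_X(H)=\infty$.
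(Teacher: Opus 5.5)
\paragraph{Verdict.} Your non-complementation argument and your real-scalar extraction match the paper's proof. The extraction steps are the separable reduction, Aldous' theorem, and reflexivity ruling out $r=1$, followed by transport and the first part giving $a_X(H)=\infty$. The detour through Rosenthal's embedding into $L_p$ is not needed for what follows.

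\paragraph{The gap: complex scalars.} You flag the complex case as the main obstacle but leave it as a plan, and this is a real gap. Two steps are missing.
\begin{enumerate}[nosep]
\item ``Pass to the realification'' does not by itself place you in a real $L_1$. You need an embedding of the underlying real space of complex $L_1(\mu)$ into a real $L_1$, for example $f\mapsto\bigl((\omega,t)\mapsto\operatorname{Re}(e^{it}f(\omega))\bigr)$ into real $L_1(\mu\times\mathbb T)$.
\item More seriously, Aldous then gives only a \emph{real}-linear embedding $U:\ell_r(\mathbb R)\to R$. Its complexification $a+ib\mapsto Ua+iUb$ need not be bounded below. For instance, in complex $\ell_r$ the sequence $e_1,ie_1,e_2,ie_2,\dots$ is real-equivalent to the unit vector basis of real $\ell_r$. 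Yet the complexified map sends $\epsilon_{2n}-i\epsilon_{2n-1}$ to $ie_n-ie_n=0$, where $\epsilon_k$ are the unit vectors of $\ell_r(\mathbb C)$.
\end{enumerate}
So ``recover a complex $\ell_r$'' requires an actual argument. Your fallback, a ``complex form of Aldous' theorem'', would need a precise source; the paper uses Aldous only over $\mathbb R$ and derives the complex case.

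\paragraph{How the paper closes it.} The missing idea is a conjugate-space trick.
\begin{enumerate}[nosep]
\item Define $W(a+ib)=(Ua+iUb,\ Ua-iUb)$ from $\ell_r(\mathbb C)$ into $R\oplus_\infty\overline R$. This map is complex-linear.
\item $W$ is bounded below. The sum and difference of its coordinates recover $2Ua$ and $2iUb$, which gives $\|a+ib\|_r\le\|a\|_r+\|b\|_r\le2\|U^{-1}\|\,\|W(a+ib)\|$.
\item Strictly singular operators form an ideal closed under addition, so the two coordinate operators cannot both be strictly singular. One of them is therefore an isomorphism on some infinite-dimensional subspace of $\ell_r(\mathbb C)$.
\item The gliding-hump argument of Proposition~\ref{prop:subproj} finds a copy of complex $\ell_r$ inside that subspace. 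Hence $R$ or $\overline R$ contains complex $\ell_r$.
\item Since $\ell_r$ is isometric to its conjugate via coordinatewise conjugation, $R$ contains complex $\ell_r$ in either case.
\end{enumerate}
With this step supplied, the rest of your argument goes through over $\mathbb C$.
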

\begin{proof}
If $P:X\to N$ is a projection onto a reflexive subspace, it is weakly compact, hence completely continuous. Its restriction to $N$ is the identity. Every bounded sequence in $N$ has a weakly convergent subsequence, which is then norm convergent. The unit ball of $N$ is therefore compact, and $N$ is finite-dimensional.

For real scalars, apply the theorem of Aldous to an infinite-dimensional separable subspace of $u(R)\subset L_1(\mu)$: it contains a copy of $\ell_r$, $1\le r\le2$ \cite{Ald81}; proofs are given in \cite{Mau80,Kri80}. Reflexivity excludes $r=1$. A separable subspace of any $L_1(\mu)$ is supported on a countable union of finite-measure sets; multiplying by a positive integrable density and restricting to a countably generated sigma algebra reduces the statement to a separable probability $L_1$ space, which embeds isometrically into $L_1(0,1)$.

For completeness, the real extraction theorem gives the same conclusion over $\mathbb C$. The underlying real space of complex $L_1(\mu)$ embeds into real $L_1(\mu\times\mathbb T)$ by
\[
 f\longmapsto \bigl((\omega,t)\mapsto\operatorname{Re}(e^{it}f(\omega))\bigr),
\]
whose norm is $2\|f\|_1/\pi$ for normalized Haar measure on $\mathbb T$. We obtain a real embedding $U:\ell_r(\mathbb R)\to R$. Write $\overline R$ for the conjugate Banach space. The map
\[
 W:\ell_r(\mathbb C)\longrightarrow R\oplus_\infty\overline R,
 \qquad W(a+ib)=(Ua+iUb,\ Ua-iUb)
\]
is complex linear and bounded below: both $Ua$ and $Ub$ are recovered from its coordinates, so
\[
 \|a+ib\|_r\le\|a\|_r+\|b\|_r
 \le2\|U^{-1}\|\,\|W(a+ib)\|.
\]
The two coordinate operators cannot both be strictly singular, since strictly singular operators form an operator ideal closed under addition \cite[Section IV.5]{Kat}. One coordinate is bounded below on an infinite-dimensional complex subspace of $\ell_r$. The gliding-hump construction in Proposition~\ref{prop:subproj} shows that every such subspace contains a copy of complex $\ell_r$. Consequently $R$ or $\overline R$ contains this copy; since $\ell_r$ is isometric to its conjugate, $R$ contains one in either case.

Transport this copy back to $X$. Proposition~\ref{prop:subproj} and isomorphism invariance give a finite internal subprojectivity constant for $H$. Every infinite-dimensional subspace of $H$ is reflexive, so the first paragraph gives $a_X(H)=\infty$.
\end{proof}

\begin{lemma}\label{lem:James}
If a Banach space contains a copy of $\ell_1$, then for every $t>1$ it contains a subspace $W$ with $d(W,\ell_1)<t$.
\end{lemma}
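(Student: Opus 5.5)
This is James' $\ell_1$ distortion theorem, and I would prove it by the standard blocking argument. Let $U:\ell_1\to X$ be an isomorphic embedding, and give $\ell_1$ the equivalent norm $|a|=\|Ua\|$. Replacing $X$ by $U(\ell_1)$, it suffices to show the following: if $|\cdot|$ is a norm on $\ell_1$ with $c\|a\|_1\le|a|\le\|a\|_1$, then for every $\eta>0$ there is a normalized block sequence $(u_k)$ of the unit vector basis with
\[
 |{\textstyle\sum_k a_k u_k}|\ \ge\ (1-\eta)\textstyle\sum_k|a_k|\,|u_k|.
\]
Then $W=[u_k]$ satisfies $d(W,\ell_1)\le(1-\eta)^{-1}<t$ once $\eta$ is small, since the upper estimate $|\sum a_ku_k|\le\sum|a_k||u_k|$ is the triangle inequality.

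To build the blocks I would use the quantities
\[
 \lambda_n=\inf\Bigl\{\,|x|:\ x\in[e_k]_{k>n},\ \|x\|_1=1\Bigr\}.
\]
This sequence is nondecreasing and bounded by $1$, so it converges to some $\lambda\ge c>0$. Choose $n_0$ with $\lambda_{n_0}>(1-\eta)\lambda$.

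Next pick successive finitely supported vectors $v_k$ beyond $n_0$ with $\|v_k\|_1=1$ and $|v_k|<\lambda(1+\eta)$; they exist because $\lambda_{n}\le\lambda$ for every $n$, and a small tail truncation produces finite support. For scalars $a_k$, the vector $\sum a_kv_k/\sum|a_k|$ has $\ell_1$-norm one, since disjoint supports make $\|\cdot\|_1$ additive, and it is supported beyond $n_0$. Hence
\[
 |{\textstyle\sum a_kv_k}|\ \ge\ \lambda_{n_0}\textstyle\sum|a_k|\ >\ (1-\eta)\lambda\sum|a_k|\ \ge\ \frac{1-\eta}{1+\eta}\sum|a_k|\,|v_k|.
\]
Normalizing $u_k=v_k/|v_k|$ gives the desired estimate with constant $(1-\eta)/(1+\eta)$. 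Transporting back through $U$, the space $W=U([v_k])$ is $(1+\eta)/(1-\eta)$-isomorphic to $\ell_1$.

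The only step needing care is the choice of the $v_k$. The infimum defining $\lambda_n$ ranges over all unit vectors of the tail space, so one approximates it first by a general vector and then truncates. The truncation changes $|\cdot|$ by at most the removed $\ell_1$ mass, because $|\cdot|\le\|\cdot\|_1$; renormalizing in $\ell_1$ then keeps $|v_k|<\lambda(1+\eta)$ after shrinking margins. Everything else is the triangle inequality. Over $\mathbb C$ the same argument works verbatim.
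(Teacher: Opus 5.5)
Your proposal is correct and follows essentially the same argument as the paper. The paper also takes the tail infima $b_n$ of $\|Uz\|$ over $\ell_1$-unit vectors supported in $[n,\infty)$, passes to the limit $b>0$, and picks disjointly supported finitely supported unit vectors $z_k$ beyond $n_0$ with $\|Uz_k\|<b+\delta$, so that $b_{n_0}\sum_k|a_k|\le\|\sum_k a_kUz_k\|\le(b+\delta)\sum_k|a_k|$; your $\lambda_n$, $\eta$ and the normalization $\|U\|\le1$ are a relabeling of this.
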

\begin{proof}
We recall the block argument underlying James' non-distortion theorem \cite{Jam64}. Let $U:\ell_1\to X$ be an embedding and put
\[
 b_n=\inf\{\|Uz\|:\|z\|_1=1,\ \operatorname{supp}z\subset[n,\infty)\},
 \qquad b=\lim_n b_n>0.
\]
Choose $n_0$ and $\delta>0$ with $(b+\delta)/b_{n_0}<t$. Successively choose finitely supported unit vectors $z_k$, with disjoint supports after $n_0$, such that $\|Uz_k\|<b+\delta$. This is possible on every tail by the definition of $b_n$ and density of finite supports. For finitely supported scalar sequences,
\[
 b_{n_0}\sum_k|a_k|\le\Big\|\sum_k a_kUz_k\Big\|
 \le(b+\delta)\sum_k|a_k|.
\]
The closed span of the $Uz_k$ has the required distortion. The argument works over both scalar fields and for the given norm of $X$.
\end{proof}

\begin{lemma}\label{lem:saturation}
Suppose that $u:X\to L_1(\mu)$ is an embedding of distortion $D$ and that $X$ contains no infinite-dimensional reflexive subspace. Then $\LUSP(X)\le D$.
\end{lemma}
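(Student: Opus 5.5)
Fix $M\in\sub(X)$ and $t>1$. The plan is to find $N\in\sub(M)$ together with a projection of $X$ onto $N$ of norm at most $Dt^2$, or any bound tending to $D$ as $t\downarrow1$. By \eqref{eq:usp-def} this gives $\LUSP(X)\le D$.

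The first step is to produce a copy of $\ell_1$ inside $M$. The space $M$ contains no infinite-dimensional reflexive subspace, and it is weakly sequentially complete, being a subspace of $L_1(\mu)$ via $u$. Rosenthal's $\ell_1$ theorem says that if $M$ contained no copy of $\ell_1$, every bounded sequence would have a weakly Cauchy subsequence, hence a weakly convergent one. Then $M$, and each of its subspaces, would be reflexive, which is impossible. So $M$ contains a copy of $\ell_1$. By Lemma~\ref{lem:James} we can then find $W\subset M$ with $d(W,\ell_1)<t$, spanned by a sequence $(w_k)$ with $\|\sum a_kw_k\|\approx\sum|a_k|$ up to the factor $t$.

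The second step moves the picture into $L_1(\mu)$. The images $g_k=uw_k$ form a $Dt$-isomorph of the $\ell_1$ basis in $L_1(\mu)$. We need a refinement to almost disjointly supported functions, since disjoint normalized functions in $L_1$ span a $1$-complemented isometric copy of $\ell_1$, with the conditional-expectation-type projection $f\mapsto\sum_k(\int f\,h_k)\,g_k'$ and signs $h_k=\operatorname{sgn}g_k'$. For this I would use Dor's theorem: if $\|\sum a_kg_k\|_1\ge\delta^{-1}\sum|a_k|\cdot\max\|g_k\|$-type lower estimates hold close to isometrically, there are disjoint sets $A_k$ carrying most of the mass of $g_k$. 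To make the constant close to isometric, apply James's argument inside $u(W)$ with the $L_1$ norm, via Lemma~\ref{lem:James} applied to $u(W)$ as a subspace of $L_1(\mu)$. This yields blocks $v_k=u(y_k)$, $y_k\in W$, $(1+\eta)$-equivalent to the $\ell_1$ basis in the $L_1$ norm. Dor's theorem then gives disjoint $A_k$ with $\|v_k1_{A_k}\|\ge(1-\eta')\|v_k\|$, where $\eta'\to0$ as $\eta\to0$.

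The third step builds the projection. Define $P_0f=\sum_k\bigl(\int_{A_k}f\,\overline{\operatorname{sgn} v_k}\,d\mu/\|v_k1_{A_k}\|\bigr)v_k$ on $L_1(\mu)$. Its norm is at most $(1+\eta)/(1-\eta')$ because the coefficient functionals have disjoint supports. It is nearly the identity on $[v_k]$, and a perturbation as in Lemma~\ref{lem:key}, inverting $P_0|_{[v_k]}$, gives a genuine projection of norm close to $1$. Compose with $u$ and $u^{-1}$: the map $u^{-1}P_0u$ projects $X$ onto $N=[y_k]\subset M$, with norm at most $\|u\|\|u^{-1}|_{u(X)}\|(1+o(1))=D(1+o(1))$. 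This works because $P_0$ maps into $u(N)\subset u(X)$. Over $\mathbb C$ the same argument applies with complex signs.

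The main obstacle is the second step, the near-isometric disjointification, which requires Dor's theorem with quantitative control of the constants. The James blocking must be carried out in the $L_1$ norm of $u(W)$ rather than in the norm of $X$, so that the $X$-distortion enters only through the final factor $D$.
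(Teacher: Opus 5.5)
Your outline is essentially the paper's proof. It uses Rosenthal's theorem plus weak sequential completeness to put $\ell_1$ inside $M$, and carries out James's blocking in the $L_1$ norm of $u(M)$ so that the distortion enters only once. It then complements the almost isometric copy with Dor's theorem and conjugates by $u$ to get the factor $D$. Your first application of Lemma~\ref{lem:James} in the norm of $X$ is redundant, since the paper applies it directly in $u(M)$.

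The difference lies in how Dor's result is used. The paper quotes the projection bound $C(t)=t/(4-3t)$ from Maurey's exposition. You instead use the disjointification form and build the projection by hand: coefficient functionals on disjoint supports, followed by a Neumann-series correction as in Lemma~\ref{lem:key}. That construction is correct, and it gives a projection onto $[v_k]$ of norm tending to $1$ directly, with no detour through the bidual.

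What you leave unjustified is the passage to an infinite sequence. Dor's estimate, in the form the paper uses it, concerns finite families and is uniform in their length. The disjoint sets $A^{(n)}_1,\dots,A^{(n)}_n$ it produces for $v_1,\dots,v_n$ change with $n$, and you need disjoint sets, or a substitute, for all $k$ at once. The paper handles this by taking a weak-star ultralimit of the finite-dimensional projections, which lands in $W^{**}$. It then composes with a projection $W^{**}\to W$ of norm less than $t$, transported from $\ell_1^{**}\to\ell_1$.

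In your framework the fix is equally short. Normalize $\|v_k\|=1$. Along a single ultrafilter in $n$, take weak-star limits $\phi_k$ of $1_{A^{(n)}_k}$ in $L_\infty$ of the $\sigma$-finite support of the $v_k$. Then $0\le\phi_k$, $\sum_k\phi_k\le1$ and $\int\phi_k|v_k|\ge1-\eta'$. The sets $A_k=\{\phi_k>1/2\}$ are disjoint, and $\int\phi_k|v_k|\le\frac12+\frac12\int_{A_k}|v_k|$ gives $\int_{A_k}|v_k|\ge1-2\eta'$. Alternatively, use the $\phi_k$ directly in your coefficient functionals.

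One smaller point concerns complex scalars. There you are implicitly using the complex form of Rosenthal's theorem, due to Dor. The paper instead reduces to the real theorem through the conjugate-space argument of Lemma~\ref{lem:noreflexive}.
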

\begin{proof}
Every $M\in\sub(X)$ contains $\ell_1$. Indeed, if it did not, Rosenthal's theorem \cite{Ros74} would give a weakly Cauchy subsequence of every bounded sequence in $M$. Weak sequential completeness would make that subsequence weakly convergent; by the Eberlein--\v Smulian theorem, $M$ would be reflexive, a contradiction. For complex scalars one can apply the real theorem: a real copy of $\ell_1$ yields a complex copy by the conjugate-space argument in the proof of Lemma~\ref{lem:noreflexive}, now with $r=1$.

Apply Lemma~\ref{lem:James} in $u(M)$ with its ambient $L_1$ norm to obtain a $W\subset u(M)$ with $d(W,\ell_1)<t$, where $1<t<4/3$. The proof of Dor's projection theorem \cite{Dor75} given by Maurey \cite[pp.~XXI.7--XXI.9]{Mau75} gives a projection $Q:L_1(\mu)\to W$ with
\begin{equation}\label{eq:Dor}
 \|Q\|\le B(t):=\frac{t^2}{4-3t}.
\end{equation}
Here is how this sufficient bound enters. The finite-dimensional estimate in that proof is $C(t)=t/(4-3t)$, uniformly in the dimension. Taking a weak-star ultralimit of the projections onto the spans of the first $n$ vectors gives an operator $L_1(\mu)\to W^{**}$ of norm at most $C(t)$ which is the identity on $W$. Transporting the canonical norm-one projection $\ell_1^{**}\to\ell_1$ by an isomorphism of distortion less than $t$ gives a projection $W^{**}\to W$ of norm less than $t$. Their composition gives \eqref{eq:Dor}. The bound applies to real and complex scalars; optimal constants are not needed.

Thus $u^{-1}Qu:X\to u^{-1}(W)\subset M$ is a projection of norm at most $DB(t)$. Since $B(t)\to1$ as $t\downarrow1$, we have $a_X(M)\le D$ for every $M$, and \eqref{eq:usp-def} proves the lemma.
\end{proof}

\begin{theorem}\label{thm:L1}
For an infinite-dimensional $\mathcal L_1$-space $X$, over either scalar field, the following are equivalent:
\begin{enumerate}[label=\textup{(\roman*)},nosep]
\item $\LSF(X)<\infty$;
\item $\LUSP(X)<\infty$;
\item $X$ is subprojective;
\item $X$ contains no infinite-dimensional reflexive subspace.
\end{enumerate}
Moreover, with the value $\infty$ allowed,
\begin{equation}\label{eq:L1-equality}
 \LSF(X)=\LUSP(X).
\end{equation}
If these constants are finite, their common value is at most the distortion of any embedding of $X$ into an $L_1(\mu)$, hence at most every strictly admissible local $\mathcal L_1$ constant. If they are infinite, there is an $H\subset X$ for which the identities in \eqref{eq:norm-eps} satisfy \eqref{eq:root} and \eqref{eq:sqrt}.
\end{theorem}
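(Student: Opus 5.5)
The plan is to close a cycle of implications, (iv)$\Rightarrow$(ii)$\Rightarrow$(iii)$\Rightarrow$(iv), and then to handle (i) separately. The same arguments should give the equality \eqref{eq:L1-equality} and the final two assertions.

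For (iv)$\Rightarrow$(ii), take any embedding $u:X\to L_1(\mu)$. Lemma~\ref{lem:embedding} supplies one with distortion $D\le\sigma$. Lemma~\ref{lem:saturation} then gives $\LUSP(X)\le D$, which is finite. The implication (ii)$\Rightarrow$(iii) is trivial from \eqref{eq:usp-def}, since a finite $a_X(M)$ provides a complemented $N\in\sub(M)$.

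For (iii)$\Rightarrow$(iv), argue by contraposition. Suppose $R$ is an infinite-dimensional reflexive subspace. Lemma~\ref{lem:noreflexive} gives $H\subset R$, isomorphic to some $\ell_r$, with $a_X(H)=\infty$. Then every infinite-dimensional subspace of $H$ is reflexive, and by the first paragraph of that lemma it is uncomplemented. So $X$ is not subprojective.

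Next, (ii)$\Rightarrow$(i) together with $\LSF\le\LUSP$ is Proposition~\ref{prop:subproj}. The substantive direction is (i)$\Rightarrow$(iv), which I would again argue by contraposition. If $X$ contains an infinite-dimensional reflexive subspace, take $H$ as above. By Lemma~\ref{lem:noreflexive} and Proposition~\ref{prop:subproj} transported through the isomorphism $H\simeq\ell_r$, $H$ is uniformly subprojective for its inherited norm with some finite admissible constant $K$. Since $H$ is reflexive, Lindenstrauss' lifting principle in the form of Lemma~\ref{lem:contractive-lifting} (with $F=X$, $Y=X$) shows that $(X,H)$ has the lifting property, with lifting constant $C=\sigma$, or $C=1$ when $X=L_1(\mu)$. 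Since $a_X(H)=\infty$, Theorem~\ref{thm:lifting}(b) gives \eqref{eq:root} and \eqref{eq:sqrt}. Hence $\LSF(X)=\infty$ via Theorem~\ref{thm:renorm}. This also proves the last sentence of the theorem.

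For the equality \eqref{eq:L1-equality}: if either constant is infinite, the equivalences make both infinite. If both are finite, (iv) holds and we already have $\LSF\le\LUSP$. The remaining inequality, $\LUSP(X)\le\LSF(X)$, is the main obstacle, since the lifting obstruction produces $a_X(H)/K$ only for witnesses $H$ with controlled internal constant. My plan is to use $\ell_1$ witnesses. Fix $M\in\sub(X)$ and $t>1$. By the first paragraph of the proof of Lemma~\ref{lem:saturation} and Lemma~\ref{lem:James} (applied to $X$'s own norm), there is $H\subset M$ with $d(H,\ell_1)<t$. Then $H$ is internally uniformly subprojective with constant $K<t\cdot(1+\theta)/(1-\theta)$ by Proposition~\ref{prop:subproj}, and $K$ tends to $1$ as $t,\theta\to$ their limits.

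The delicate point is the lifting property for an $\ell_1$ kernel, which is not reflexive, so Lemma~\ref{lem:contractive-lifting} does not apply. However, $H$ is complemented in $X$: Dor's projection from Lemma~\ref{lem:saturation}, pulled back through $u$, gives $\lambda_X(H)<\infty$. The remark after Proposition~\ref{prop:finite-obstruction} then gives lifting through $s=I-P$, and, crucially, the asymptotic bound in Theorem~\ref{thm:lifting}(a) does not depend on $C$. Thus $\LSF(X)\ge a_X(H)/K\ge a_X(M)/K$, using that $a_X(H)\ge a_X(M)$ because $\sub(H)\subset\sub(M)$. Letting $K\downarrow1$ and taking the supremum over $M$ yields $\LUSP(X)\le\LSF(X)$. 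This is Corollary~\ref{cor:complemented} in spirit. The bound by the distortion of any embedding, and hence by every strictly admissible $\sigma$, then follows from Lemma~\ref{lem:saturation} and Lemma~\ref{lem:embedding}.
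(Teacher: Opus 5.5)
Your cycle (iv)$\Rightarrow$(ii)$\Rightarrow$(iii)$\Rightarrow$(iv), the implication (ii)$\Rightarrow$(i), and the lifting argument for (i)$\Rightarrow$(iv) are correct and match the paper. Using Lemma~\ref{lem:contractive-lifting} instead of citing \cite[Proposition 2.1]{KP97} directly is fine and even yields $C=\sigma$.

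The gap is in your proof of $\LUSP(X)\le\LSF(X)$, at the sentence ``$H$ is complemented in $X$: Dor's projection from Lemma~\ref{lem:saturation}, pulled back through $u$, gives $\lambda_X(H)<\infty$.'' The projection in Lemma~\ref{lem:saturation} is onto a subspace $W\subset u(M)$ that was chosen by Lemma~\ref{lem:James} in the ambient $L_1(\mu)$ norm, with $d(W,\ell_1)<t<4/3$. Its pull-back $u^{-1}Qu$ therefore projects onto $u^{-1}(W)$, not onto your $H$.

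Your $H$ was chosen almost isometric to $\ell_1$ in the norm of $X$. After the embedding you only know $d(u(H),\ell_1)<tD$, and when the distortion $D$ is not close to $1$ Dor's theorem says nothing about $u(H)$. Dor's theorem is a statement about near-isometry in the $L_1$ norm. Merely isomorphic copies of $\ell_1$ in $L_1$ need not be complemented; Bourgain constructed uncomplemented ones. So nothing you have said forces $H$ itself to be complemented, and the lifting property for $(X,H)$ is not established.

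The repair is the paper's route. At that stage you already know (iv), hence (ii) and (iii), so $X$ is subprojective and some $N\in\sub(H)$ is complemented in $X$. The internal constant passes to $N$ by restricting projections, and complementation gives the lifting property. Corollary~\ref{cor:complemented}, or equivalently Theorem~\ref{thm:lifting}(a) whose asymptotic bound is independent of $C$, then gives
\[
a_X(M)\le a_X(N)\le\LUSP(N)\,\LSF(X)\le K\,\LSF(X),
\]
and letting $K\downarrow1$ finishes.

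An equivalent fix is to apply Lemma~\ref{lem:James} a second time inside $u(H)$ with the $L_1$ norm, use Dor's projection there, and work with the pulled-back subspace of $H$. With either change your argument coincides with the paper's.
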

\begin{proof}
(ii)$\Rightarrow$(i) is Proposition~\ref{prop:subproj}, and (ii)$\Rightarrow$(iii) is immediate. For (iii)$\Rightarrow$(iv), subprojectivity applied to an infinite-dimensional reflexive subspace would produce such a subspace complemented in $X$, contradicting Lemma~\ref{lem:noreflexive}. For (iv)$\Rightarrow$(ii), combine Lemmas~\ref{lem:embedding} and \ref{lem:saturation}.

For (i)$\Rightarrow$(iv), suppose $R\in\sub(X)$ is reflexive. Lemma~\ref{lem:noreflexive} gives $H\subset R$, isomorphic to $\ell_r$ for some $1<r\le2$, with a finite admissible internal constant $K$ and $a_X(H)=\infty$. In the lifting principle of \cite[Proposition 2.1]{KP97}, take $F=Y=X$, $E=X/H$, and $Q=q_H$. Its kernel $H$ is reflexive, hence complemented in $H^{**}$ by the identity. Thus $(X,H)$ has the lifting property. Theorem~\ref{thm:lifting}(b) gives $\LSF(X)=\infty$, a contradiction, and also gives the stated quantitative obstruction when (iv) fails.

It remains to prove \eqref{eq:L1-equality} in the finite case. Every $M\in\sub(X)$ contains $\ell_1$ by the first paragraph of Lemma~\ref{lem:saturation}. Fix $\eta>0$. Apply Lemma~\ref{lem:James}, this time with the original norm of $X$, to find $H\in\sub(M)$ with $d(H,\ell_1)<1+\eta$. Then $\LUSP(H)<1+\eta$. Since $X$ is subprojective, some $N\in\sub(H)$ is complemented in $X$. Heredity of the internal constant gives $\LUSP(N)\le\LUSP(H)<1+\eta$. Corollary~\ref{cor:complemented} now yields
\[
 a_X(M)\le a_X(N)\le\LUSP(N)\LSF(X)
 \le(1+\eta)\LSF(X).
\]
Take the supremum over $M$ and let $\eta\downarrow0$. The reverse inequality is Proposition~\ref{prop:subproj}. The bound by the embedding distortion is Lemma~\ref{lem:saturation}.
\end{proof}

\begin{corollary}\label{cor:L1mu}
If $L_1(\mu)$ is infinite-dimensional and isometric to $\ell_1(\Gamma)$, then $\LSF(L_1(\mu))=1$. If $\mu$ has a non-atomic measurable set of finite positive measure, then $\LSF(L_1(\mu))=\infty$. In particular, for a semi-finite measure $\mu$,
\[
 \LSF(L_1(\mu))=
 \begin{cases}1,&\mu\text{ is purely atomic},\\
 \infty,&\mu\text{ is not purely atomic}.
 \end{cases}
\]
\end{corollary}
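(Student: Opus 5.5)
For the first assertion I would apply Proposition~\ref{prop:subproj}: it gives $\LUSP(\ell_1(\Gamma))=1$, hence $\LSF(\ell_1(\Gamma))=1$. Since $\LSF$ is invariant under surjective isometries (the distortion inequality after Proposition~\ref{prop:complemented-inheritance} with $D=1$), we get $\LSF(L_1(\mu))=1$ whenever $L_1(\mu)$ is isometric to $\ell_1(\Gamma)$. The lower bound $\LSF\ge1$ is built into the definition.

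For the second assertion, let $A$ be a non-atomic measurable set with $0<\mu(A)<\infty$. The plan is to find an infinite-dimensional reflexive subspace of $L_1(\mu)$ and then invoke Theorem~\ref{thm:L1}, (i)$\Rightarrow$(iv), or directly Theorem~\ref{thm:lifting}(b) via Lemma~\ref{lem:noreflexive}.
\begin{itemize}
\item By Sierpi\'nski's theorem on non-atomic measures, the normalized restriction of $\mu$ to $A$ admits a measurable map $\phi:A\to[0,1]$ pushing it forward to Lebesgue measure.
\item Composition with $\phi$ then embeds $L_1(0,1)$ isometrically into $L_1(\mu|_A)$, which is a subspace of $L_1(\mu)$ (extension by zero, with scalar renormalization).
\item Inside $L_1(0,1)$, the closed span of a sequence of independent standard Gaussians, or of Rademacher functions, is isomorphic to $\ell_2$ by rotation invariance or Khintchine's inequality, and is therefore reflexive.
\item Since $L_1(\mu)$ is an $\mathcal L_1$-space, condition (iv) of Theorem~\ref{thm:L1} fails, so $\LSF(L_1(\mu))=\infty$.
\end{itemize}
The step needing the most care is the measure-theoretic transfer. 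If one prefers not to use Sierpi\'nski, it suffices to construct directly, by repeated halving of $A$ (possible by non-atomicity via the intermediate value property of non-atomic measures), a sequence of independent symmetric $\pm1$ functions on $A$. Khintchine's inequality then gives an $\ell_2$ copy spanned by these Rademacher-type functions. This avoids building a full isomorphism with $L_1(0,1)$.

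For the semi-finite dichotomy, decompose $\mu$ into its atomic and non-atomic parts.
\begin{itemize}
\item If $\mu$ is not purely atomic, there is a measurable set of positive measure containing no atoms. Semi-finiteness yields a subset of finite positive measure, which is still non-atomic, and the previous paragraph gives $\infty$.
\item If $\mu$ is purely atomic and semi-finite, every function in $L_1(\mu)$ is determined by its values on atoms of finite measure. Indeed, atoms of infinite measure contribute nothing to $L_1$, since semi-finiteness forbids an infinite atom from carrying integrable nonzero functions.
\item Normalizing by the atom masses, the map $f\mapsto(f(a)\mu(a))_a$ is then an isometry onto $\ell_1(\Gamma)$, with $\Gamma$ the set of finite atoms up to null sets. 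The first assertion gives the value $1$.
\item Infinite-dimensionality is part of the hypothesis in the first case, and is automatic in the second case from the existence of the reflexive subspace.
\end{itemize}
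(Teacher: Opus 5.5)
Your proposal is correct and follows essentially the same route as the paper. The atomic case comes from Proposition~\ref{prop:subproj} (with isometric invariance and $\LSF\ge1$); the non-atomic case uses Rademacher functions obtained by bisecting a non-atomic set of finite positive measure, together with Khintchine's inequality and Theorem~\ref{thm:L1}; and the semi-finite dichotomy comes from the atomic/non-atomic decomposition (the Sierpi\'nski detour is optional, and a semi-finite measure in fact has no atoms of infinite measure at all).
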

\begin{proof}
The first case follows from Proposition~\ref{prop:subproj} and $\LSF\ge1$. On a non-atomic finite-measure set, successive bisections produce an independent sequence of Rademacher functions. The Khintchine inequalities show that their closed span in $L_1$ is isomorphic to $\ell_2$, so Theorem~\ref{thm:L1} gives the second case. The same conclusion holds for complex scalars. For semi-finite measures, the atomic part gives an $\ell_1(\Gamma)$ space, while failure of pure atomicity provides a non-atomic finite-measure part.
\end{proof}

The obstruction on $L_1(0,1)$ can be exhibited without choosing an unspecified copy of $\ell_r$.

\begin{corollary}\label{cor:L1explicit}
Let $G\subset L_1(0,1)$ be the closed span of independent standard Gaussian variables, circular complex Gaussian variables over $\mathbb C$. For $0<\varepsilon\le1$, put
\[
 |f|_\varepsilon=\max\{\varepsilon\|f\|_1,\ \dist(f,G)\}.
\]
The identity $J_\varepsilon:L_1\to(L_1,|\cdot|_\varepsilon)$ satisfies
\[
 \inn(J_\varepsilon)=\varepsilon,\qquad
 d_+(J_\varepsilon)\ge
 \frac{2}{1+\sqrt{5+4\varepsilon^{-1}}}.
\]
Consequently $\liminf_{\varepsilon\downarrow0}d_+(J_\varepsilon)/\sqrt\varepsilon\ge1$ and $\LSF(L_1(0,1))=\infty$.
\end{corollary}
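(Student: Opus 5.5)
The plan is to apply Theorem~\ref{thm:lifting}(b) with $X=L_1(0,1)$ and $H=G$, and then check that the constants $K$ and $C$ can both be taken equal to $1$. With $C=K=1$, formula \eqref{eq:root} reads
\[
 r_\varepsilon=\frac{2}{1+\sqrt{1+4(\varepsilon^{-1}+1)}}=\frac{2}{1+\sqrt{5+4\varepsilon^{-1}}},
\]
which is exactly the stated bound. The equality $\inn(J_\varepsilon)=\varepsilon$ is \eqref{eq:in-eps}, since $\dist(f,G)=\|q_Gf\|$. The asymptotic $\liminf d_+(J_\varepsilon)/\sqrt\varepsilon\ge1$ then follows from \eqref{eq:sqrt} with $KC=1$. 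Finally, $\LSF(L_1(0,1))=\infty$ follows from Theorem~\ref{thm:renorm}, because $d_+(J_\varepsilon)/\inn(J_\varepsilon)\to\infty$.

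The first step is to identify $G$ isometrically. A finite real combination $\sum a_kg_k$ of independent standard Gaussians is itself Gaussian with variance $\sum a_k^2$. Its $L_1$ norm is therefore $\sqrt{2/\pi}\,\|a\|_2$. In the complex circular case it is a circular Gaussian, whose modulus has a fixed Rayleigh law scaled by $\|a\|_2$. In either case $G$ is isometric to a multiple of $\ell_2$, hence a Hilbert space, and so $K=1$ is admissible by Proposition~\ref{prop:subproj}. It is infinite-dimensional and reflexive, so Lemma~\ref{lem:noreflexive} gives $a_X(G)=\infty$: no infinite-dimensional subspace of $G$ is complemented in $L_1$.

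The second step is the lifting constant. The pair $(L_1,G)$ has the lifting property because $G$ is reflexive. Lemma~\ref{lem:contractive-lifting} with $F=L_1(\mu)$ gives, for every $T:L_1\to L_1/G$, a lifting $L$ with $\|L\|=\|T\|$, so $C=1$ is a lifting constant.

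There is one subtlety to confirm. In Theorem~\ref{thm:lifting}, $K$ is required to be an admissible constant; attainment is not needed, and for a Hilbert space the constant $1$ is attained by orthogonal projections. Likewise $C=1$ is attained, because Lemma~\ref{lem:contractive-lifting} gives the exact norm rather than an infimum. So no limiting argument in $K$ or $C$ is required. If one preferred only $K,C>1$, one would let them decrease to $1$, using the continuity of the root formula. The main point to verify carefully is the exact Gaussian norm computation in the complex case, which is routine.
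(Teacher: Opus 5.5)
Your proposal is correct and is essentially the paper's own proof. The paper likewise uses Gaussian rotation invariance to see that $G$ is Hilbertian in its inherited norm (so $K=1$), Lemma~\ref{lem:noreflexive} for $a_{L_1}(G)=\infty$, Lemma~\ref{lem:contractive-lifting} for the lifting constant $C=1$, and then substitutes $K=C=1$ into Theorem~\ref{thm:lifting}(b); your computation of $r_\varepsilon$ and your remarks that $K$ and $C$ are attained are accurate.
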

\begin{proof}
Gaussian rotation invariance gives
$\|\sum a_ng_n\|_1=\gamma(\sum|a_n|^2)^{1/2}$, where $\gamma=\|g_1\|_1$. Thus $G$ is a Hilbert space with its inherited norm and admits the internal constant $K=1$. Lemma~\ref{lem:noreflexive} gives $a_{L_1}(G)=\infty$. Since $G$ is reflexive, Lemma~\ref{lem:contractive-lifting} gives the lifting constant $C=1$. Now use Theorem~\ref{thm:lifting}(b).
\end{proof}

\begin{remark}
The square-root lower bound follows from the quadratic term in Lemma~\ref{lem:key}. No matching upper bound is established here; the exact asymptotic size of this radius remains a question.
\end{remark}

\subsection{A conditional converse}

\begin{theorem}\label{thm:conditional}
Suppose there is $K<\infty$ such that every $M\in\sub(X)$ contains an $H\in\sub(M)$ that is uniformly subprojective with constant $K$ and such that $(X,H)$ has the lifting property. Then
\[
 \LSF(X)\le\LUSP(X)\le K\,\LSF(X).
\]
\end{theorem}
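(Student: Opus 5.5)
The left inequality $\LSF(X)\le\LUSP(X)$ is Proposition~\ref{prop:subproj}, so the work is in the right inequality $\LUSP(X)\le K\,\LSF(X)$. If $\LSF(X)=\infty$ there is nothing to prove, so we assume $\LSF(X)<\infty$. By \eqref{eq:usp-def} we must show $a_X(M)\le K\,\LSF(X)$ for every $M\in\sub(X)$.

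Fix $M\in\sub(X)$. The hypothesis gives $H\in\sub(M)$ that is uniformly subprojective with constant $K$ and such that $(X,H)$ has the lifting property. We would like to apply Theorem~\ref{thm:lifting}(a) with this admissible constant $K$. That theorem is stated with $K$ admissible for $H$, and the hypothesis supplies exactly this, so it gives
\[
 a_X(H)\le K\,\LSF(X).
\]
If $a_X(H)=\infty$, part (b) would give $\LSF(X)=\infty$, contrary to our assumption, so $a_X(H)<\infty$ and the displayed inequality is meaningful. Since $\sub(H)\subset\sub(M)$, the infimum defining $a_X(M)$ runs over a larger family, so $a_X(M)\le a_X(H)\le K\,\LSF(X)$. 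Taking the supremum over $M$ then yields $\LUSP(X)\le K\,\LSF(X)$.

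The only delicate point is checking that Theorem~\ref{thm:lifting}(a) applies with the constant $K$ as given. That result uses $K$ only through Lemma~\ref{lem:key}, which needs, for each $M'\in\sub(H)$ (there, $A(M)$), some $N\in\sub(M')$ with a projection of $H$ onto $N$ of norm at most $K$. This is precisely the meaning of ``uniformly subprojective with constant $K$'' for $H$ with its inherited norm. No attainment issue arises, because the uniform hypothesis provides the bound $K$ directly; if one only had $\LUSP(H)\le K$, one would use admissible constants $K'>K$ and let $K'\downarrow K$. The lifting constant $C$ affects only the nonasymptotic bounds and drops out of the limit in part (a). The argument therefore needs no new estimate.
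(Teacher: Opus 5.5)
Your proposal is correct and follows the paper's proof essentially verbatim: the left inequality from Proposition~\ref{prop:subproj}, then for each $M$ the chain $a_X(M)\le a_X(H)\le K\,\LSF(X)$ from Theorem~\ref{thm:lifting}(a) and the monotonicity of $a_X$, and finally the supremum over $M$. Your detour through part (b) to rule out $a_X(H)=\infty$ is harmless but unnecessary, since part (a) already covers that case.
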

\begin{proof}
If $\LSF(X)=\infty$, the right inequality is automatic. Otherwise, for such $H\subset M$, Theorem~\ref{thm:lifting}(a) gives $a_X(M)\le a_X(H)\le K\LSF(X)$. Take the supremum over $M$ and use \eqref{eq:usp-def} and Proposition~\ref{prop:subproj}.
\end{proof}

The lifting constants may depend on $H$; no uniform bound on them is required, since Theorem~\ref{thm:lifting}(a) is asymptotic. The same conclusion holds if the hypothesis is stated as $\LUSP(H)\le K$: use admissible constants greater than $K$ and let them decrease to $K$.

For example, the hypothesis holds when $X$ is subprojective and every $M\in\sub(X)$ contains a subspace $D$-isomorphic to a Hilbert space, for one fixed $D$. Choose a further subspace complemented in $X$; its inherited norm has internal constant at most $D$, and complementation provides the lifting property. Theorem~\ref{thm:conditional} gives $\LUSP(X)\le D\LSF(X)$.

\subsection{A subprojective space with \texorpdfstring{$\LSF=\infty$}{infinite constant}}

Subprojectivity alone does not give a finite constant.

\begin{corollary}\label{cor:Lpn}
Let $2<p_1<p_2<\cdots$ with $p_n\to\infty$, and let $X=(\bigoplus_nL_{p_n}(0,1))_{\ell_2}$ be the real $\ell_2$-sum. Then $X$ is subprojective and $\LSF(X)=\infty$.
\end{corollary}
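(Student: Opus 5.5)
Two things need proof: that $X$ is subprojective, and that $\LSF(X)=\infty$.

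\emph{Subprojectivity.} I would combine classical facts. For $2<p<\infty$ the space $L_p(0,1)$ is subprojective: by Kadec--Pe\l czy\'nski, every infinite-dimensional subspace either contains a copy of $\ell_p$ complemented in $L_p$, or is isomorphic to $\ell_2$ and complemented. The $\ell_2$-sum of subprojective spaces is subprojective. Let $M\in\sub(X)$ and let $P_n$ be the coordinate projections. If some $P_n|_M$ fails to be strictly singular, then $P_n$ is bounded below on some $M'\in\sub(M)$. Pick $N\subset P_n(M')$ complemented in $L_{p_n}$ by $Q$. Then $(P_n|_{M'})^{-1}(N)\subset M$ is complemented in $X$ via $(P_n|_{M'})^{-1}QP_n$. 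Otherwise every $P_n|_M$ is strictly singular. A gliding-hump argument then gives a normalized sequence in $M$ that is a small perturbation of a block sequence with respect to the decomposition. Such a block sequence is $1$-equivalent to the $\ell_2$ basis and $1$-complemented by the block-diagonal projection built from norming functionals in each coordinate. A perturbation argument as in Proposition~\ref{prop:subproj} then gives a complemented subspace of $M$.

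\emph{$\LSF=\infty$.} I would use Proposition~\ref{prop:complemented-inheritance}, which gives $\LSF(L_{p_n})\le\LSF(X)$ because each $L_{p_n}$ is $1$-complemented. It therefore suffices to show $\LSF(L_{p})\to\infty$ as $p\to\infty$. I would apply Corollary~\ref{cor:complemented} with $H=G_p$, the span of independent Gaussians in $L_p$. It is isometric to $\ell_2$, so $\LUSP(H)=1$, and it is complemented. This yields $\LSF(L_p)\ge a_{L_p}(G_p)$.

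This step is where I expect the real obstacle. I need $a_{L_p}(G_p)\to\infty$, that is, every infinite-dimensional subspace of a Hilbertian subspace of $L_p$ has projection constant in $L_p$ at least of order $\sqrt p$. I would argue as follows. Every $N\in\sub(G_p)$ is isometric to $\ell_2$, and by rotation invariance of Gaussians it contains subspaces spanned by i.i.d.\ standard Gaussian sequences (up to small perturbation, via a gliding hump on the Gaussian basis). I would then compare the norm of the orthogonal (conditional expectation type) projection with general projections. The known fact is that the projection constant of $\ell_2^n$ spanned by Gaussians in $L_p$ is of order $\sqrt p$ for large $n$, a dual Grothendieck/Khintchine-type estimate: $\|P\|\ge \gamma_p\gamma_{p'}/\gamma_1^{\ldots}$ with $\gamma_p\|g\|_{p'}\sim\sqrt p$. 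An averaging argument over the orthogonal group would show that any projection onto the span of $n$ i.i.d.\ Gaussians has norm at least that of the averaged, rotation-invariant one, which is the orthogonal projection. The latter has norm $\|g\|_p\|g\|_{p'}/\|g\|_2^2\asymp\sqrt p$. The averaging is the delicate point for infinite-dimensional $N$, since finite-dimensional subspaces of $N$ carry no projection onto them from an infinite-dimensional one. I would handle it by compressing a projection onto $N$ to $n$-dimensional Gaussian subspaces of $N$ and letting $n\to\infty$.

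Combining the two parts gives $\LSF(X)\ge\sup_n\LSF(L_{p_n})=\infty$.
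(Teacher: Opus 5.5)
Your overall architecture is right and matches the paper on the essential point. The witnesses are the complemented, isometrically Hilbertian Gaussian subspaces $G_p$, and Corollary~\ref{cor:complemented} reduces everything to a lower bound $a_{L_p}(G_p)\gtrsim\sqrt p$. The detour through Proposition~\ref{prop:complemented-inheritance} is harmless; the paper works directly in $X$, since a projection of $X$ onto a subspace of the $n$th summand restricts to one of $L_{p_n}$. Your subprojectivity proof (Kadec--Pe\l czy\'nski, then the strictly singular/gliding-hump dichotomy in the $\ell_2$-sum) is a valid substitute for the citation of Oikhberg--Spinu.

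\textbf{The gap} is in the step you flagged. It is not true that the $O(n)$-average of a projection onto $G_n=\mathrm{span}(g_1,\dots,g_n)$ is the orthogonal projection. Let $\mu_n$ be the standard Gaussian measure on $\mathbb R^n$ and $n\ge2$. Every bounded $O(n)$-equivariant operator $L_p(\mu_n)\to G_n$ has the form $P_\psi f=\sum_i\mathbb E[f\,\psi(|x|)x_i]\,g_i$ with $\psi$ radial. It is a projection as soon as $\mathbb E[\psi(|x|)|x|^2]=n$, so equivariance does not single out $\psi\equiv1$.

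Write $x=r\theta$ with $r=|x|$ independent of $\theta$, which is uniform on the sphere. Then $\|P_\psi\|=\gamma_p\|\psi(r)r\|_{p'}\|\theta_1\|_{p'}$. By H\"older this is minimized at $\psi(r)\propto r^{p-2}$, with value $n\|\theta_1\|_p\|\theta_1\|_{p'}$. That is strictly smaller than the orthogonal value $\gamma_p\gamma_{p'}=\|r\|_p\|r\|_{p'}\|\theta_1\|_p\|\theta_1\|_{p'}$, because $\|r\|_p\|r\|_{p'}>\mathbb E r^2=n$. So the finite-dimensional claim ``every projection onto $G_n$ has norm at least that of the orthogonal one'' is false, and the lower bound you display is left incomplete.

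The argument can be repaired within your own framework. The minimal value $n\|\theta_1\|_p\|\theta_1\|_{p'}=\|\sqrt n\,\theta_1\|_p\|\sqrt n\,\theta_1\|_{p'}$ tends to $\gamma_p\gamma_{p'}\asymp\sqrt p$, since $\sqrt n\,\theta_1$ converges to a standard Gaussian with all moments. Now use your compression $\Pi_nP$, where $\Pi_n$ is the orthogonal projection of $N$ onto the span of its first $n$ Gaussian basis vectors; it has norm one because $N$ is isometrically Hilbertian. Combine this with the norm-one conditional expectation onto $L_p(\sigma(h_1,\dots,h_n))$ to pass from $L_p(0,1)$ to $L_p(\mu_n)$. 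This gives $\lambda_{L_p}(N)\ge\gamma_p\gamma_{p'}$ for every $N\in\sub(G_p)$.

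The paper instead cites the Gordon--Lewis--Retherford bound $\lambda_{L_p}(G_p)\ge c_*\sqrt p$. It transfers this to each $H_0\in\sub(G_p)$ with no finite-dimensional step. An $L_2$-orthonormal basis $(h_n)$ of $H_0$ is jointly Gaussian and uncorrelated, hence i.i.d.\ standard Gaussian. So $(L_p(\sigma(h_n)),H_0)$ is isometric to the corresponding pair for $G_p$, and conditional expectation handles the ambient space. The same observation makes your gliding-hump reduction to Gaussian-spanned subspaces of $N$ unnecessary.
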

\begin{proof}
Subprojectivity is \cite[Proposition 2.1 and Remark 2.11]{OiSp15}. For $p>2$ let $G_p\subset L_p(0,1)$ be the closed span of independent standard Gaussian variables $(g_n)$. Then $\|\sum a_ng_n\|_p=\gamma_p(\sum|a_n|^2)^{1/2}$, and $Pf=\sum_n\langle f,g_n\rangle g_n$ is bounded with $\|Pf\|_p\le\gamma_p\|f\|_2\le\gamma_p\|f\|_p$ and is a projection onto $G_p$, so $G_p$ is a complemented Hilbertian subspace. The Gaussian projection estimate recorded in \cite[Remark 2.11]{OiSp15}, which attributes the estimate to Gordon--Lewis--Retherford \cite{GLR73}, says that every projection of $L_p$ onto $G_p$ has norm at least $c_*\sqrt p$, with $c_*>0$ absolute. The same holds for each $H_0\in\sub(G_p)$: an $L_2$-orthonormal basis $(h_n)$ of $H_0$ consists of independent standard Gaussian variables, the pair $(L_p(\sigma(h_n)),H_0)$ is isometric to $(L_p,G_p)$, and the conditional expectation onto $L_p(\sigma(h_n))$ has norm one. Placing $G_{p_n}$ in the $n$th coordinate of $X$ we get $a_X(G_{p_n})\ge c_*\sqrt{p_n}$, and Corollary~\ref{cor:complemented} gives $\LSF(X)\ge c_*\sqrt{p_n}$ for all $n$.
\end{proof}

\section{Quotient obstructions on H.I. domains}\label{sec:HI}

\subsection{The operator ideal and the role of the target norm}
A Banach space $X$ is \emph{hereditarily indecomposable} (H.I.) if no closed infinite-dimensional subspace is a topological direct sum of two closed infinite-dimensional subspaces. The classical characterization \cite[Theorem 4.2]{G07} gives
\begin{equation}\label{eq:HI-distance}
 \cL(X,Y)\setminus\Phi_+(X,Y)=\StrictS(X,Y),
 \qquad d_+(T)=\dist(T,\StrictS(X,Y))
\end{equation}
for every target $Y$ when $X$ is H.I. Here $\StrictS$ denotes the ideal of strictly singular operators. We use its small-restriction property: every $M\in\sub(X)$ contains, for each strictly singular $Q$ and each $\delta>0$, an $N\in\sub(M)$ with $\|Q|_N\|<\delta$ \cite[Section IV.5]{Kat}.

An H.I. space is not subprojective. Indeed, choose a closed infinite-dimensional subspace $M$ of infinite codimension. An infinite-dimensional $N\subset M$ complemented in $X$ would split $X$ into two infinite-dimensional summands. Every infinite-dimensional subspace of $X$ is itself H.I., so none is internally uniformly subprojective. These domains are therefore outside the witness hypothesis of Theorem~\ref{thm:conditional}.

\begin{proposition}\label{prop:HI-restriction}
If $X$ is H.I., then $\inn(T|_M)=\inn(T)$ for every $T:X\to Y$ and every $M\in\sub(X)$.
\end{proposition}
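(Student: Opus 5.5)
One inequality is free. Since $\sub(M)\subset\sub(X)$, we have
\[
\inn(T)=\inf_{N\in\sub(X)}\|T|_N\|\le\inf_{N\in\sub(M)}\|T|_N\|=\inn(T|_M).
\]
It therefore suffices to show $\inn(T|_M)\le\inn(T)$. Concretely, fix $N\in\sub(X)$ and $\eta>0$. We must find $N'\in\sub(M)$ with $\|T|_{N'}\|\le\|T|_N\|+\eta$.

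The plan is to move $N$ into $M$ using the H.I. structure. If $M$ has finite codimension, take $N'=N\cap M$, which lies in $\sub(M)$ and gives $\|T|_{N'}\|\le\|T|_N\|$. In general, $M$ and $N$ are closed infinite-dimensional subspaces of an H.I. space, so $M+N$ is not a topological direct sum. This is the classical angle characterization: for every $\delta>0$ there are unit vectors $m\in M$, $n\in N$ with $\|m-n\|<\delta$. Applying this to tails, with Mazur's device as in Lemma~\ref{lem:kernel}, I would build inductively unit vectors $m_k\in M$, $n_k\in N$ with $\|m_k-n_k\|<\delta_k$. At stage $k$, choose finite-codimensional $W_k$ from Mazur's device with parameter $1/2$ for $\operatorname{span}\{m_1,n_1,\dots,m_{k-1},n_{k-1}\}$. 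Then apply the angle property to $M\cap W_k$ and $N\cap W_k$; these are still infinite-dimensional, and their sum is still not a direct sum since $X$ is H.I.

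The resulting sequence $(n_k)$ is basic with constant at most $2$, and its biorthogonal functionals have norm at most $4$. Choosing $\sum_k4\delta_k<\eta'$ small, the principle of small perturbations gives two things:
\begin{itemize}
\item $(m_k)$ is basic and equivalent to $(n_k)$;
\item the operator $U=I+\sum_kf_k\otimes(m_k-n_k)$ satisfies $\|U-I\|\le\eta'$ and maps $N_0=[n_k]$ onto $N'=[m_k]\subset M$, where the $f_k\in X^*$ are Hahn--Banach extensions of the biorthogonal functionals.
\end{itemize}
Then for $y\in N'$, writing $y=Ux$ with $x\in N_0\subset N$, we get
\[
\|Ty\|\le\|Tx\|+\|T\|\eta'\|x\|\le\|T|_N\|\,\|x\|+\|T\|\eta'\|x\|.
\]
Combined with $\|x\|\le(1-\eta')^{-1}\|y\|$, this gives $\|T|_{N'}\|\le(\|T|_N\|+\eta'\|T\|)/(1-\eta')$. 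Letting $\eta'\to0$ yields the claim.

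The main obstacle is justifying the angle property in the inductive step: the infimum of $\|m-n\|$ over unit vectors must be $0$ even after passing to finite-codimensional tails. I would argue by contradiction. If $\inf\{\|m-n\|:m\in M\cap W,\ n\in N\cap W,\ \|m\|=\|n\|=1\}>0$, then $(M\cap W)+(N\cap W)$ is closed and the sum is direct, and $M\cap W$, $N\cap W$ are closed infinite-dimensional subspaces. This contradicts hereditary indecomposability. The only care needed is that $M\cap N$ might be nonzero, but that case is harmless: if $M\cap N$ is infinite-dimensional, take $N'=M\cap N$ directly. Otherwise we may shrink $W$ so that $M\cap N\cap W=\{0\}$ and proceed as above.
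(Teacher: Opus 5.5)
Your argument is correct, but it takes a different route from the paper.

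The paper fixes $A\in\sub(X)$ (your $N$). Because $q_A:X\to X/A$ has infinite-dimensional kernel, the H.I. identification $\cL(X,Y)\setminus\Phi_+(X,Y)=\StrictS(X,Y)$ makes $q_A$ strictly singular. The small-restriction property of strictly singular operators then gives a subspace of $M$ on which $\|q_A\|<\delta$. Each $x$ in that subspace lies within $\delta\|x\|$ of some $y\in A$, chosen pointwise with no linear selection, and the bound on $\|Tx\|$ follows in one line.

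You instead start from the angle criterion, which you prove directly from the definition of H.I. You then run a gliding-hump construction with Mazur's device. In effect you re-prove by hand, for $q_N$ restricted to $M$, the small-restriction property that the paper imports. You also obtain more than is needed: an operator $U$ with $\|U-I\|<\eta'$ mapping a subspace of $N$ onto a subspace of $M$, which is a linear, almost-isometric transport.

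The paper's proof is shorter because it relies on results already quoted at the start of that section. Yours is self-contained at the cost of the basic-sequence bookkeeping.

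Two small points about your write-up:
\begin{itemize}
\item The subspaces $W_k$ must be nested for the basis-constant estimate $\|\sum_{k\le m}a_kn_k\|\le2\|\sum_{k\le n}a_kn_k\|$. You need $n_{m+1},\dots,n_n\in W_{m+1}$, so intersect each new Mazur subspace with $W_{k-1}$, as in Lemma~\ref{lem:kernel}.
\item Your special cases ($M$ of finite codimension, and $M\cap N\neq\{0\}$) are unnecessary. A nonzero vector of $M\cap N\cap W$ already gives angle zero, so the contradiction argument covers every case uniformly.
\end{itemize}
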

\begin{proof}
Only $\inn(T|_M)\le\inn(T)$ needs proof. Fix $A\in\sub(X)$ and $\delta>0$. Since $q_A:X\to X/A$ has infinite-dimensional kernel, \eqref{eq:HI-distance} makes it strictly singular. Choose $N\in\sub(M)$ with $\|q_A|_N\|<\delta$. For each $x\in N$, choose $y\in A$ with $\|x-y\|\le\delta\|x\|$. Then
\[
 \|Tx\|\le[\|T|_A\|+\delta(\|T|_A\|+\|T\|)]\|x\|.
\]
Let $\delta\downarrow0$ and take the infimum over $A$.
\end{proof}

\begin{corollary}\label{cor:HI-endomorphisms}
On a complex H.I. space, every endomorphism has a unique representation $T=\lambda I+S$, $S\in\StrictS(X)$, and
\[
 d_+^{X,X}(T)=\inn(T)=|\lambda|.
\]
\end{corollary}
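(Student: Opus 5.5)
The proof has three parts: the representation $T=\lambda I+S$, the lower bound $d_+(T)\ge|\lambda|$, and the matching upper bound.

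\emph{Representation.} I would use the classical Gowers--Maurey theorem for complex H.I.\ spaces \cite{G07}: every $T\in\cL(X)$ has the form $\lambda I+S$ with $S\in\StrictS(X)$. Uniqueness holds because $\mu I$ is strictly singular only for $\mu=0$, since $X$ is infinite-dimensional.

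\emph{Upper bound.} By \eqref{eq:HI-distance}, $d_+(T)=\dist(T,\StrictS(X))\le\|T-S\|=|\lambda|$. Since always $\inn(T)\le d_+(T)$, it then suffices to prove $\inn(T)\ge|\lambda|$.

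\emph{Lower bound on $\inn$.} Fix $M\in\sub(X)$ and $\delta>0$. By the small-restriction property of $\StrictS$ \cite[Section IV.5]{Kat}, there is $N\in\sub(M)$ with $\|S|_N\|<\delta$. For $x\in N$ this gives
\[
\|Tx\|\ge|\lambda|\|x\|-\|Sx\|\ge(|\lambda|-\delta)\|x\|,
\]
so $\|T|_M\|\ge\|T|_N\|\ge|\lambda|-\delta$. Letting $\delta\downarrow0$ and taking the infimum over $M$ yields $\inn(T)\ge|\lambda|$.

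Combining the three steps gives $|\lambda|\le\inn(T)\le d_+(T)\le|\lambda|$, which is the claim. Proposition~\ref{prop:HI-restriction} is not needed here.

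The only substantive input is the scalar-plus-strictly-singular representation. It relies on complex scalars through a spectral argument: a point $\lambda$ in the boundary of the essential spectrum makes $T-\lambda I$ fail to be semi-Fredholm, and in an H.I.\ space this forces $T-\lambda I$ to be strictly singular. I would cite it rather than reprove it. Everything else is a direct consequence of \eqref{eq:HI-distance}.
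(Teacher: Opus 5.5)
Your proof is correct and uses the same ingredients as the paper's: the cited scalar-plus-strictly-singular representation, the small-restriction property of $\StrictS$ to get $\inn(T)\ge|\lambda|$, and the fact that $T-\lambda I=S\notin\Phi_+$ to get $d_+(T)\le|\lambda|$. The difference is only in how the argument is closed. You use the sandwich $|\lambda|\le\inn(T)\le d_+(T)\le|\lambda|$, so the general inequality $\inn\le d_+$ replaces the Neumann-series bound $d_+(\lambda I)\ge|\lambda|$. The paper instead shows that both $\inn$ and $d_+$ are invariant under strictly singular perturbations and then computes $\inn(\lambda I)=d_+(\lambda I)=|\lambda|$.
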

\begin{proof}
The representation is the complex H.I. operator theorem \cite{GoMa93,G07}. The small-restriction property gives $\inn(R+S)=\inn(R)$ for strictly singular $S$. Strictly singular perturbations preserve upper semi-Fredholmness \cite[Section IV.5]{Kat}, so translation by $S$ also preserves $d_+$. Finally $\inn(\lambda I)=d_+(\lambda I)=|\lambda|$, by the Neumann series and the perturbation $\lambda I$ itself.
\end{proof}

The target in Problem~\ref{prob:converse} is allowed to change independently of the domain. In particular, Theorem~\ref{thm:renorm} gives
\begin{equation}\label{eq:HI-renorm}
 \LSF(X)=\sup_{|\cdot|\sim\|\cdot\|}
 \frac{\dist(J_{|\cdot|},\StrictS(X,X_{|\cdot|}))}{\inn(J_{|\cdot|})}.
\end{equation}
The preceding corollary concerns a fixed pair and does not determine this supremum.

\subsection{An exact formula and a quotient characterization}
Let $Q:X\to Z$ be a strictly singular contraction and put
\begin{equation}\label{eq:HI-graph}
 |x|_{\varepsilon,Q}=\max\{\varepsilon\|x\|,\|Qx\|\},
 \qquad J_{\varepsilon,Q}:X\to X_{\varepsilon,Q},\quad 0<\varepsilon\le1.
\end{equation}
All norms on the right-hand side of the next formula are those of the initial spaces.

\begin{proposition}\label{prop:HI-graph}
For an H.I. domain,
\begin{equation}\label{eq:HI-exact}
 \inn(J_{\varepsilon,Q})=\varepsilon,
 \qquad d_+(J_{\varepsilon,Q})=
 \inf_{S\in\StrictS(X)}\max\{\varepsilon\|I-S\|,\|Q-QS\|\}.
\end{equation}
For a fixed $Q$, the ratios $d_+(J_{\varepsilon,Q})/\varepsilon$ are bounded if and only if there are $S_n\in\StrictS(X)$ such that
\[
 \sup_n\|I-S_n\|<\infty,\qquad \|Q-QS_n\|\longrightarrow0.
\]
\end{proposition}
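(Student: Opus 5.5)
The plan has three parts: compute $\inn$ from the small-restriction property, compute $d_+$ from the distance formula \eqref{eq:HI-distance} after identifying operators into $X_{\varepsilon,Q}$ with endomorphisms of $X$, and then read off the boundedness criterion.

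\emph{The restriction quantity.} The bound $|x|_{\varepsilon,Q}\ge\varepsilon\|x\|$ gives $\inn(J_{\varepsilon,Q})\ge\varepsilon$. For the reverse inequality, fix $\delta>0$. Since $Q$ is strictly singular, its small-restriction property gives some $N\in\sub(X)$ with $\|Q|_N\|<\delta$. On $N$ we then have $|x|_{\varepsilon,Q}\le\max\{\varepsilon,\delta\}\|x\|$. Letting $\delta\downarrow0$ yields $\inn(J_{\varepsilon,Q})=\varepsilon$.

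\emph{The radius.} By \eqref{eq:HI-distance}, $d_+(J_{\varepsilon,Q})=\dist(J_{\varepsilon,Q},\StrictS(X,X_{\varepsilon,Q}))$. The norm $|\cdot|_{\varepsilon,Q}$ is equivalent to $\|\cdot\|$, because $\varepsilon\|x\|\le|x|_{\varepsilon,Q}\le\|x\|$, using $\|Q\|\le1$. Hence operators $X\to X_{\varepsilon,Q}$ are exactly the endomorphisms of $X$. Strict singularity does not depend on the equivalent target norm, so $\StrictS(X,X_{\varepsilon,Q})$ corresponds to $\StrictS(X)$. For $S\in\StrictS(X)$ we get
\[
 \|J_{\varepsilon,Q}-S\|_{X\to X_{\varepsilon,Q}}
 =\sup_{\|x\|\le1}\max\{\varepsilon\|(I-S)x\|,\|Q(I-S)x\|\}
 =\max\{\varepsilon\|I-S\|,\|Q-QS\|\},
\]
since a supremum of maxima is the maximum of the suprema. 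Taking the infimum over $S$ gives \eqref{eq:HI-exact}.

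\emph{The boundedness criterion.} Suppose first that such $S_n$ exist, with $B=\sup_n\|I-S_n\|$. Given $\varepsilon$, choose $n$ with $\|Q-QS_n\|\le\varepsilon$. Then \eqref{eq:HI-exact} gives $d_+(J_{\varepsilon,Q})\le\max\{B,1\}\varepsilon$, so the ratios are bounded.

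Conversely, suppose $d_+(J_{\varepsilon,Q})\le A\varepsilon$ for all $\varepsilon\in(0,1]$. Take $\varepsilon_n=1/n$. By \eqref{eq:HI-exact} there is $S_n\in\StrictS(X)$ with
\[
 \max\{\varepsilon_n\|I-S_n\|,\|Q-QS_n\|\}\le(A+1)\varepsilon_n.
\]
This yields $\|I-S_n\|\le A+1$ and $\|Q-QS_n\|\le(A+1)/n\to0$.

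The only delicate points are two identifications. The first is that strict singularity into $X_{\varepsilon,Q}$ coincides with membership in $\StrictS(X)$; this holds because strict singularity is an isomorphic notion in the target. The second is the operator-norm identity displayed above; it is immediate once one notes that both terms are suprema over the same unit ball. Neither step is a real obstacle.
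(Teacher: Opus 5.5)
Your proposal is correct and follows the paper's proof essentially step by step. You get $\inn$ from the floor $\varepsilon\|\cdot\|$ together with the small-restriction property. You get $d_+$ from the distance formula \eqref{eq:HI-distance}, the invariance of strict singularity under equivalent target norms, and the identity $\|J_{\varepsilon,Q}-S\|=\max\{\varepsilon\|I-S\|,\|Q(I-S)\|\}$. For the boundedness criterion you use the choice $\varepsilon=1/n$ in one direction and a suitable $n$ in the other. The differences are cosmetic: you let $\delta\downarrow0$ where the paper takes $\delta\le\varepsilon$ to obtain $|\cdot|_{\varepsilon,Q}=\varepsilon\|\cdot\|$ exactly on a subspace, and you use the constant $\max\{B,1\}$ in the converse.
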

\begin{proof}
The floor gives $\inn\ge\varepsilon$, and the small-restriction property gives an infinite-dimensional subspace where the new norm equals $\varepsilon\|\cdot\|$. Equivalent target norms preserve strict singularity. Thus \eqref{eq:HI-distance} gives the infimum, since
\[
 \|J_{\varepsilon,Q}-S\|_{X\to X_{\varepsilon,Q}}
 =\max\{\varepsilon\|I-S\|,\|Q(I-S)\|\}.
\]
If the ratios have bound $C$, use $\varepsilon=1/n$ to choose $S_n$ with $\|I-S_n\|<C+1/n$ and $\|Q-QS_n\|<(C+1/n)/n$. Conversely, if $\|I-S_n\|\le A$, choose $n$ with $\|Q-QS_n\|\le A\varepsilon$ and use the formula.
\end{proof}

For $M\in\sub(X)$, with the initial quotient norm on $X/M$, define
\begin{equation}\label{eq:HI-bdef}
 b_X(M)=\inf\left\{\limsup_n\|I-S_n\|:
 S_n\in\StrictS(X),\ \|q_M(I-S_n)\|\longrightarrow0\right\},
\end{equation}
where the infimum of the empty set is $\infty$.

\begin{theorem}\label{thm:HI-quotients}
For every H.I. space $X$, with $\infty$ allowed,
\begin{equation}\label{eq:HI-b}
 \LSF(X)=\sup_{M\in\sub(X)}b_X(M).
\end{equation}
In particular, finiteness is equivalent to the existence of one $C<\infty$ such that every $M\in\sub(X)$ has a sequence as in \eqref{eq:HI-bdef} with $\sup_n\|I-S_n\|\le C$.
\end{theorem}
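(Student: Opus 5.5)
We prove the two inequalities in \eqref{eq:HI-b} separately, both by reducing to the renorming formula \eqref{eq:HI-renorm} and the exact formula of Proposition~\ref{prop:HI-graph}.

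\emph{Lower bound $\LSF(X)\ge b_X(M)$.} Fix $M\in\sub(X)$ and put $Q=q_M:X\to X/M$. This is a contraction with infinite-dimensional kernel, hence strictly singular by \eqref{eq:HI-distance}. The renorming $|x|_{\varepsilon,Q}$ of \eqref{eq:HI-graph} is an equivalent norm, and Proposition~\ref{prop:HI-graph} gives $\inn=\varepsilon$ together with the formula for $d_+(J_{\varepsilon,Q})$.

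Suppose $\LSF(X)<c<\infty$. Then $d_+(J_{\varepsilon,Q})<c\varepsilon$ for all $\varepsilon$. Taking $\varepsilon=1/n$, we obtain $S_n\in\StrictS(X)$ with $\|I-S_n\|<c$ and $\|q_M(I-S_n)\|<c/n\to0$. Hence $b_X(M)\le c$, and letting $c\downarrow\LSF(X)$ gives the bound. If $\LSF(X)=\infty$ there is nothing to prove.

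\emph{Upper bound $\LSF(X)\le\sup_M b_X(M)=:\beta$.} Assume $\beta<\infty$. By Theorem~\ref{thm:renorm} it suffices to treat an identity $J=J_{|\cdot|}$ into an equivalent renorming, and we may assume $\inn(J)=\iota>0$. Fix $\delta>0$ and choose $M\in\sub(X)$ with $|x|\le(\iota+\delta)\|x\|$ on $M$.

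Next pick $S_n\in\StrictS(X)$ with $\limsup_n\|I-S_n\|<\beta+\delta$ and $\|q_M(I-S_n)\|\to0$. We aim to bound $|(I-S_n)x|$ for $\|x\|\le1$. Since $\dist((I-S_n)x,M)\le\eta_n:=\|q_M(I-S_n)\|$, we can write $(I-S_n)x=m+e$ with $m\in M$ and $\|e\|\le\eta_n+\delta'$. This gives $\|m\|\le\|I-S_n\|+\eta_n+\delta'$. Therefore
\[
|(I-S_n)x|\le(\iota+\delta)\|m\|+\|J\|\,\|e\|\le(\iota+\delta)(\beta+2\delta)+\|J\|(\eta_n+\delta')
\]
for large $n$. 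The term $\|J\|\eta_n$ tends to $0$, even though $\|J\|$ may be large.

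Since $S_n$ is strictly singular into $X_{|\cdot|}$ (equivalent target norms preserve strict singularity), \eqref{eq:HI-distance} gives $d_+(J)\le\|J-S_n\|_{X\to X_{|\cdot|}}$. Letting $n\to\infty$ and then $\delta,\delta'\to0$ yields $d_+(J)\le\beta\,\iota$, hence $\LSF(X)\le\beta$.

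The final sentence of the theorem, about a uniform $C$, then follows directly from the two inequalities, adjusting $C$ by an arbitrarily small amount. The main point to check is the upper bound. The sequence $S_n$ must be chosen \emph{after} $M$, which is allowed since $b_X(M)\le\beta$ for every $M$. Also, the error $e$ is controlled only in the original norm, so one must use that its $|\cdot|$-norm carries the factor $\|J\|$, which is harmless because $\eta_n\to0$ independently of the renorming.
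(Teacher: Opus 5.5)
Your proposal is correct and follows the paper's proof. The lower bound applies Proposition~\ref{prop:HI-graph} with $Q=q_M$ and $\varepsilon=1/n$. The upper bound is the same near-point estimate $\|TR_n\|\le\|T|_M\|\,\|R_n\|+(\|T|_M\|+\|T\|)(a_n+1/n)$ combined with $d_+(T)\le\|T-TS_n\|$. The only difference is cosmetic: you first reduce to identities into renormings via Theorem~\ref{thm:renorm} and fix a nearly optimal $M$, whereas the paper runs the estimate directly for an arbitrary $T:X\to Y$ and takes the infimum over $M$ at the end.
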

\begin{proof}
If $\LSF(X)=C_0<\infty$, use Proposition~\ref{prop:HI-graph} with $Q=q_M$ and $\varepsilon=1/n$. The quotient is strictly singular by \eqref{eq:HI-distance}. The resulting sequence satisfies
\[
 \|I-S_n\|<C_0+1/n,\qquad \|q_M(I-S_n)\|<(C_0+1/n)/n.
\]
Hence $b_X(M)\le C_0$.

Conversely, suppose $B=\sup_M b_X(M)<\infty$. Fix $T:X\to Y$, $M\in\sub(X)$ and $\delta>0$. Choose $R_n=I-S_n$ with $\limsup\|R_n\|\le b_X(M)+\delta$ and $a_n=\|q_MR_n\|\to0$. For each $x$, choose $m\in M$ such that $\|R_nx-m\|\le(a_n+1/n)\|x\|$. No linear selection is needed. It follows that
\[
 \|TR_n\|\le\|T|_M\|\|R_n\|
 +(\|T|_M\|+\|T\|)(a_n+1/n).
\]
Since $TS_n$ is strictly singular, $d_+(T)\le\|TR_n\|$ by \eqref{eq:HI-distance}. Take the limit, let $\delta\downarrow0$, and then take the infimum over $M$. This gives $d_+(T)\le B\inn(T)$. The two implications also prove \eqref{eq:HI-b} for infinite values. A uniform limsup bound becomes a uniform bound on the sequences by discarding finitely many terms and allowing any $C>B$.
\end{proof}

\begin{corollary}\label{cor:HI-brai}
If $\LSF(X)<\infty$, the ideal $\StrictS(X)$ and its quotient by $\mathcal K(X)$ admit bounded right approximate identities.
\end{corollary}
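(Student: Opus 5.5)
We need to prove Corollary \ref{cor:HI-brai}: if $\LSF(X)<\infty$, then $\StrictS(X)$ and $\StrictS(X)/\mathcal K(X)$ admit bounded right approximate identities. The plan is to pass from the quotient sequences of Theorem~\ref{thm:HI-quotients} to right multiplication, using the small-restriction property of strictly singular operators.

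Put $C=\LSF(X)<\infty$ and fix $C'>C$. By Theorem~\ref{thm:HI-quotients}, every $M\in\sub(X)$ carries a sequence $S_n\in\StrictS(X)$ with $\|I-S_n\|\le C'$ and $\|q_M(I-S_n)\|\to0$. We aim to show that for every $A\in\StrictS(X)$ and $\delta>0$ there is $S\in\StrictS(X)$ with $\|I-S\|\le C'$ and $\|A-AS\|\le\delta$, with $C'$ independent of $A$ and $\delta$.

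The key estimate is the one already used in the converse half of Theorem~\ref{thm:HI-quotients}, applied to $T=A$ and $Y=X$:
\[
 \|A(I-S_n)\|\le\|A|_M\|\,\|I-S_n\|+(\|A|_M\|+\|A\|)(a_n+1/n),
 \qquad a_n=\|q_M(I-S_n)\|.
\]
Because $A$ is strictly singular, the small-restriction property gives some $M\in\sub(X)$ with $\|A|_M\|<\eta$ for any prescribed $\eta>0$. Choosing $\eta$ small and then $n$ large yields
\[
 \|A-AS_n\|\le\eta C'+(\eta+\|A\|)(a_n+1/n)<\delta .
\]
The norm bound on $S_n$ itself is $\|S_n\|\le1+C'$.

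A single $S$ works for one $A$. For a genuine approximate identity we handle finitely many $A_1,\dots,A_k$ at once. One option is to iterate: find $S^{(1)}$ for $A_1$, then $S^{(2)}$ for the operators $A_i(I-S^{(1)})$, and form $I-(I-S^{(1)})(I-S^{(2)})$. That product, however, loses uniformity of the bound, since norms multiply. The better choice is a single common subspace. Since $\StrictS(X)$ is an ideal closed under sums, the operator $(A_1,\dots,A_k):X\to X\oplus_\infty\cdots\oplus_\infty X$ is strictly singular, so one $M$ makes every $\|A_i|_M\|<\eta$ simultaneously. The same $S_n$ then works for all $i$, and $\|S_n\|\le1+C'$ stays uniform.

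Indexing by pairs $(F,\delta)$, with $F$ a finite subset of $\StrictS(X)$, gives a bounded net $(S_{F,\delta})$ with $\|A-AS_{F,\delta}\|\to0$ for every $A$. This net is a bounded right approximate identity of $\StrictS(X)$. The quotient map $\StrictS(X)\to\StrictS(X)/\mathcal K(X)$ is a contractive algebra homomorphism, so the images of this net form a bounded right approximate identity there. This quotient statement is immediate; I expect no real obstacle here.
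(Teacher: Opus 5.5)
Your proof is correct, but it reaches the approximate identity by a different route from the paper's.

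The paper never chooses a subspace. It applies the exact formula \eqref{eq:HI-exact} of Proposition~\ref{prop:HI-graph} directly to the normalized tuple $Q=L^{-1}(A_1,\dots,A_r)$, which is a strictly singular contraction into $X\oplus_\infty\cdots\oplus_\infty X$. Since $\inn(J_{\varepsilon,Q})=\varepsilon$, finiteness of $\LSF(X)$ immediately gives $S\in\StrictS(X)$ with $\|I-S\|<\LSF(X)+\delta$ and $\max_j\|A_j-A_jS\|=L\|Q(I-S)\|<L(\LSF(X)+\delta)\varepsilon$.

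You instead use only the quotient renormings, through Theorem~\ref{thm:HI-quotients}. The small-restriction property, applied to the strictly singular tuple, gives one $M$ on which all $A_i$ are small. The estimate from the converse half of that theorem then transfers $\|q_M(I-S_n)\|\to0$ to $\|A_i(I-S_n)\|\to0$.

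The paper's argument is shorter. It also shows that the graph norms \eqref{eq:HI-graph} can be used directly with an arbitrary strictly singular $Q$, not only with quotient maps. Yours costs an extra subspace choice and an approximation estimate. In exchange, it shows that the bounded approximants defining $b_X(M)$ already produce the right approximate identity, so the corollary follows from the uniform bound $\sup_M b_X(M)<\infty$ itself. Both arguments give the same norm bound, $1+\LSF(X)$ plus arbitrary slack. Both pass to $\StrictS(X)/\mathcal K(X)$ in the same way, through the contractive quotient homomorphism.
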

\begin{proof}
For a finite family $A_1,\ldots,A_r\in\StrictS(X)$, choose $L\ge1$ with $\|A_j\|\le L$. The map $Qx=L^{-1}(A_1x,\ldots,A_rx)$ into the finite $\ell_\infty$-sum of copies of $X$ is a strictly singular contraction. For $\delta>0$ fixed, the finite comparison bound and \eqref{eq:HI-exact} give $S\in\StrictS(X)$ with
\[
 \|I-S\|<\LSF(X)+\delta,
 \qquad \max_j\|A_j-A_jS\|<L(\LSF(X)+\delta)\varepsilon.
\]
Direct the choices by finite families and required precision. They give a right approximate identity of norm at most $\LSF(X)+\delta+1$. Its image in the quotient has the same property.
\end{proof}

\subsection{Cosingularity gives an exact radius}
An endomorphism $S$ is strictly cosingular if $q_MS$ is not surjective for any closed infinite-codimensional $M\subset X$; write $\mathcal{SC}(X)$ for this class.

\begin{theorem}\label{thm:HI-cosingular}
Suppose $X$ is H.I. and $\StrictS(X)\subset\mathcal{SC}(X)$. For every closed $M$ of infinite dimension and codimension, the identity into
\begin{equation}\label{eq:HI-quotientnorm}
 |x|_{\varepsilon,M}=\max\{\varepsilon\|x\|,\dist(x,M)\}
\end{equation}
satisfies
\begin{equation}\label{eq:HI-radius-one}
 \inn(J_{\varepsilon,M})=\varepsilon,
 \qquad d_+(J_{\varepsilon,M})=1\quad(0<\varepsilon\le1).
\end{equation}
Consequently $\LSF(X)=\infty$.
\end{theorem}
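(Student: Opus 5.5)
The plan is to apply Proposition~\ref{prop:HI-graph} with $Q=q_M$. This is a strictly singular contraction by \eqref{eq:HI-distance}, since its kernel $M$ is infinite-dimensional. The norm in \eqref{eq:HI-quotientnorm} is exactly $|x|_{\varepsilon,q_M}$, so \eqref{eq:HI-exact} immediately gives $\inn(J_{\varepsilon,M})=\varepsilon$ together with
\[
 d_+(J_{\varepsilon,M})=\inf_{S\in\StrictS(X)}\max\{\varepsilon\|I-S\|,\|q_M(I-S)\|\}.
\]
The upper bound $d_+\le1$ comes from $S=0$: then $\varepsilon\|I\|\le1$ and $\|q_M\|\le1$. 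Equivalently, the identity itself is a perturbation of norm $|J_{\varepsilon,M}|\le 1$ onto the zero operator.

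The lower bound is the key step: I claim $\|q_M(I-S)\|\ge1$ for every $S\in\StrictS(X)$. By hypothesis $S$ is strictly cosingular, and $M$ has infinite codimension, so $q_MS$ is not surjective. I would first show that this non-surjectivity persists under the contraction estimate. Suppose $\|q_M(I-S)\|=\|q_M-q_MS\|<1$. Then $q_MS=q_M-(q_M-q_MS)$ is a perturbation of the quotient map $q_M$ by an operator of norm $<1$. Since $q_M$ maps the open unit ball of $X$ onto the open unit ball of $X/M$, a standard open-mapping (successive approximation) argument shows that $q_M-D$ is surjective whenever $\|D\|<1$.

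The iteration runs as follows. Given $z\in X/M$, pick $x_0$ with $q_Mx_0=z$ and $\|x_0\|\le(1+\eta)\|z\|$. The residual $z-(q_M-D)x_0=Dx_0$ has norm at most $\|D\|(1+\eta)\|z\|$. Choose $\eta$ so that $\|D\|(1+\eta)<1$ and iterate; the resulting geometric series converges. Hence $q_MS$ would be surjective, contradicting $S\in\mathcal{SC}(X)$. Therefore $\|q_M(I-S)\|\ge1$ for all strictly singular $S$, and the infimum above is at least $1$. This gives $d_+(J_{\varepsilon,M})=1$.

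For the consequence, an H.I. space has closed subspaces of infinite dimension and codimension: take any infinite-dimensional subspace of infinite codimension, for instance the closed span of every other vector of a basic sequence. Then by Theorem~\ref{thm:renorm},
\[
 \LSF(X)\ge d_+(J_{\varepsilon,M})/\inn(J_{\varepsilon,M})=1/\varepsilon\to\infty.
\]
The main obstacle is the surjectivity-under-small-perturbation step. It must be done carefully because $q_M$ is not injective, so Neumann series cannot be applied directly; the iterative lifting with near-optimal representatives is what I would use. Everything else follows from the exact formula of Proposition~\ref{prop:HI-graph}.
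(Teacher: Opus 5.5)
Your proposal is correct and follows the paper's proof. You apply \eqref{eq:HI-exact} with $Q=q_M$, take $S=0$ for $d_+\le1$, and show $\|q_M-q_MS\|\ge1$ by contradiction with strict cosingularity. The only difference is how you prove that $q_MS=q_M-D$ with $\|D\|<1$ is surjective: you use successive approximation, whereas the paper dualizes, noting that $q_M^*$ is an isometry, so $\|(q_MS)^*f\|\ge(1-r)\|f\|$, and an operator whose adjoint is bounded below is surjective.
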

\begin{proof}
The first equality also follows directly from $q_M|_M=0$ and the floor in the norm. For every $S\in\StrictS(X)$, $\|q_M-q_MS\|\ge1$. Indeed, if this norm were $r<1$, the isometry $q_M^*$ would give
\[
 \|(q_MS)^*f\|\ge(1-r)\|f\|\quad(f\in(X/M)^*).
\]
An operator whose adjoint is bounded below is surjective, contradicting the strict cosingularity of $S$. Formula~\eqref{eq:HI-exact} now gives $d_+\ge1$. The choice $S=0$ gives $d_+\le1$.
\end{proof}

We record a standard fact used to identify examples. A strictly singular endomorphism of an infinite-dimensional space is not lower semi-Fredholm. For every real $t\ne0$, $S-tI$ is Fredholm of index zero, by strictly singular perturbation stability. If $S$ were lower semi-Fredholm, local constancy of the semi-Fredholm index would give $\operatorname{ind}S=0$ as $t\to0$, forcing $S$ to be Fredholm, a contradiction \cite[Chapter IV]{Kat}.

\begin{corollary}\label{cor:HI-GM}
Theorem~\ref{thm:HI-cosingular} applies if $X$ is H.I. and quotient indecomposable, or if both $X$ and $X^*$ are H.I. In particular, it applies to the original Gowers--Maurey space.
\end{corollary}
\begin{proof}
A space is quotient indecomposable if none of its quotients is a topological direct sum of two closed infinite-dimensional subspaces. For such a range, each endomorphism is either lower semi-Fredholm or strictly cosingular \cite[Theorem 4.3]{G07}. The preceding fact excludes the first alternative for a strictly singular $S$.

If $X^*$ is H.I., the same fact and semi-Fredholm duality imply that $S^*$ is not upper semi-Fredholm. Thus $S^*$ is strictly singular by \eqref{eq:HI-distance} on $X^*$. A surjection $q_MS$ onto an infinite-dimensional quotient would make $S^*q_M^*$ bounded below, which is impossible. Hence $S$ is strictly cosingular.

Ferenczi proved that the original Gowers--Maurey space is quotient hereditarily indecomposable, and that its dual is H.I. \cite[Corollaries 21 and 22]{Fer99}.
\end{proof}

The domain keeps its initial norm throughout. Varying the equivalent target norm gives $d_+/\inn=1/\varepsilon$, whereas Corollary~\ref{cor:HI-endomorphisms} computes the ratio for the initial endomorphism norm. The fixed-pair product constructions of \cite{GM12} address the latter type of comparison.

\begin{corollary}\label{cor:HI-small-dual}
Let $X$ be complex H.I. If every endomorphism of $X^*$ is a scalar multiple of the identity plus a strictly singular operator, then \eqref{eq:HI-radius-one} holds. This includes the complex H.I. examples of Ferenczi whose duals are not H.I. \cite[Propositions 23--25]{Fer99}.
\end{corollary}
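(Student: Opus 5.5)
The plan is to reduce Corollary~\ref{cor:HI-small-dual} to Theorem~\ref{thm:HI-cosingular}. It suffices to show that every $S\in\StrictS(X)$ is strictly cosingular. Once that inclusion is known, Theorem~\ref{thm:HI-cosingular} gives \eqref{eq:HI-radius-one} for every closed $M$ of infinite dimension and codimension, and also gives $\LSF(X)=\infty$. Following the second half of the proof of Corollary~\ref{cor:HI-GM}, I will argue through the adjoint. Assume that $q_MS$ is surjective onto an infinite-dimensional quotient $X/M$. Then $S^*q_M^*$ is bounded below on $(X/M)^*$. Consequently $S^*$ is bounded below on the infinite-dimensional closed subspace $M^\perp=q_M^*((X/M)^*)$ of $X^*$, so $S^*$ is not strictly singular.

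The key step is to show that $S^*$ is strictly singular using the hypothesis on $X^*$ in place of hereditary indecomposability of $X^*$. By assumption, $S^*=\lambda I_{X^*}+W$ with $W\in\StrictS(X^*)$. I will first show that $\lambda=0$.
\begin{itemize}
\item The standard fact recorded before Corollary~\ref{cor:HI-GM} says that $S$ is not lower semi-Fredholm.
\item By semi-Fredholm duality, $S^*$ is then not upper semi-Fredholm.
\item If $\lambda\ne0$, then $\lambda I+W$ would be Fredholm, since strictly singular perturbations preserve Fredholmness \cite[Section IV.5]{Kat}. This contradicts the previous point.
\end{itemize}
Hence $\lambda=0$, so $S^*=W$ is strictly singular. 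This contradicts the conclusion of the first paragraph, and therefore $S\in\mathcal{SC}(X)$.

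The semi-Fredholm duality step needs care. It uses the fact that $S^*\in\Phi_+(X^*)$ if and only if $S\in\Phi_-(X)$, for bounded operators between Banach spaces \cite[Chapter IV]{Kat}. This is where closed range is used, via the closed range theorem. The complex scalar field enters only through the hypothesis itself: the representation $\lambda I+W$ is what the "scalar-plus-strictly-singular" assumption provides. Over the complex field this also matches Corollary~\ref{cor:HI-endomorphisms} when $X^*$ happens to be H.I.

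For the final sentence of the corollary, I would cite \cite[Propositions 23--25]{Fer99}. There Ferenczi constructs complex H.I. spaces whose duals are not H.I. but whose dual endomorphisms are scalar plus strictly singular. I expect the main obstacle to be this bibliographic verification: confirming that Ferenczi's statement is exactly the form $\lambda I+\StrictS$ on $X^*$, rather than a weaker form such as scalar plus inessential or scalar modulo compact. If the statement is weaker, I would redo the argument above with that ideal. Only two properties are used: $\lambda I+W$ must be Fredholm for $\lambda\ne0$, and $W$ must not be bounded below on any infinite-dimensional subspace. Both properties hold for strictly singular $W$; for inessential $W$ the second would need a separate check.
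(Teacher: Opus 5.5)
Your proposal is correct and follows essentially the same route as the paper. Both arguments write $S^*=\lambda I+W$, rule out $\lambda\ne0$ by Fredholm theory and duality, conclude that $S^*$ is strictly singular so that $S$ is strictly cosingular, and then apply Theorem~\ref{thm:HI-cosingular}. The only cosmetic difference is in excluding $\lambda\ne0$: the paper says that $S^*$ Fredholm forces $S$ Fredholm, which is impossible for a strictly singular $S$, so it never needs the non-lower-semi-Fredholm fact or $\Phi_-/\Phi_+$ duality, though your route is equally valid.
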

\begin{proof}
For $S\in\StrictS(X)$ write $S^*=\mu I+V$ with $V$ strictly singular. If $\mu\ne0$, $S^*$ and hence $S$ would be Fredholm, a contradiction. Thus $S^*$ is strictly singular, implying that $S$ is strictly cosingular as in the preceding proof. Apply Theorem~\ref{thm:HI-cosingular}.
\end{proof}

\begin{corollary}\label{cor:HI-nonSC}
If an H.I. space $X$ has $\LSF(X)<\infty$, then for each $M$ of infinite dimension and codimension there are bounded $S_n\in\StrictS(X)$ with
\[
 \|(I-S_n^*)|_{M^\perp}\|\longrightarrow0.
\]
For all sufficiently large $n$, $S_n^*$ is bounded below on $M^\perp$ and $q_MS_n$ is surjective. In particular $\StrictS(X)\not\subset\mathcal{SC}(X)$, and $X^*$ is not H.I.
\end{corollary}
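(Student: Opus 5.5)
We obtain the sequence from Theorem~\ref{thm:HI-quotients} and then dualize. Suppose $\LSF(X)=C_0<\infty$ and fix $M$ of infinite dimension and codimension. By \eqref{eq:HI-b}, $b_X(M)\le C_0$. So there are $S_n\in\StrictS(X)$ with $\sup_n\|I-S_n\|\le C_0+1$ (after discarding finitely many terms) and $\|q_M(I-S_n)\|\to0$. In particular $\|S_n\|\le C_0+2$, so the $S_n$ are bounded.

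Next we pass to adjoints. The map $q_M^*:(X/M)^*\to X^*$ is an isometry onto $M^\perp$. Hence
\[
 \|(I-S_n^*)|_{M^\perp}\|=\|(I-S_n)^*q_M^*\|=\|q_M(I-S_n)\|\longrightarrow0 .
\]
This is the displayed convergence. Now take $n$ with $r_n=\|(I-S_n^*)|_{M^\perp}\|<1$. For $f\in M^\perp$ we get $\|S_n^*f\|\ge(1-r_n)\|f\|$, so $S_n^*$ is bounded below on $M^\perp$. Consequently $(q_MS_n)^*=S_n^*q_M^*$ is bounded below on $(X/M)^*$. By the closed range theorem, as already used in the proof of Theorem~\ref{thm:HI-cosingular}, $q_MS_n$ is surjective.

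Because $M$ has infinite codimension, this surjectivity shows that $S_n$ is not strictly cosingular, although $S_n\in\StrictS(X)$. Hence $\StrictS(X)\not\subset\mathcal{SC}(X)$. The conclusion about $X^*$ then follows from the argument in Corollary~\ref{cor:HI-GM}. If $X^*$ were H.I., every strictly singular $S$ on $X$ would have $S^*$ strictly singular: $S$ is not lower semi-Fredholm, so $S^*$ is not upper semi-Fredholm, and \eqref{eq:HI-distance} applies on $X^*$. Then $S$ would be strictly cosingular, which we have just contradicted.

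The only delicate point is the duality step. One must check that $S_n^*$ bounded below on the whole of $M^\perp$ gives surjectivity of $q_MS_n$ onto all of $X/M$, and not just dense range. This holds because bounded-belowness of the adjoint is exactly the closed range theorem criterion for surjectivity. The claim that $S_n^*$ is not upper semi-Fredholm when $X^*$ is H.I. relies on semi-Fredholm duality together with the fact, recorded before Corollary~\ref{cor:HI-GM}, that strictly singular operators are not lower semi-Fredholm.
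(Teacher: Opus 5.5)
Your proof is correct and follows the paper's own argument. You get the $S_n$ from Theorem~\ref{thm:HI-quotients}, transfer $\|q_M(I-S_n)\|\to0$ to $M^\perp$ through the isometry $q_M^*$, and deduce that $S_n^*q_M^*$ is bounded below, hence that $q_MS_n$ is surjective. You then finish with the duality argument of Corollary~\ref{cor:HI-GM}, and you also spell out details the paper leaves implicit: the uniform bound on $\|S_n\|$ and the adjoint identity $\|(I-S_n^*)q_M^*\|=\|q_M(I-S_n)\|$.
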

\begin{proof}
Use Theorem~\ref{thm:HI-quotients} and the isometry of $q_M^*$ onto $M^\perp$. For large $n$, $\|(I-S_n^*)|_{M^\perp}\|<1$, so $S_n^*q_M^*$ is bounded below. Surjectivity follows by adjoint duality. The last assertion follows from Corollary~\ref{cor:HI-GM}.
\end{proof}

Over $\mathbb C$, finiteness also excludes the scalar-plus-strictly-singular property on $X^*$, by Corollary~\ref{cor:HI-small-dual}. Merely having a non-H.I. dual is therefore not enough to escape the obstruction.

\subsection{Compact products give a second obstruction}
\begin{proposition}\label{prop:HI-separation}
For the contraction $Q$ in \eqref{eq:HI-graph}, if
\[
 \delta_Q=\inf_{S\in\StrictS(X)}\|Q-QS\|>0,
\]
then $d_+(J_{\varepsilon,Q})\ge\delta_Q$ and $\LSF(X)=\infty$. It suffices that $Q$ be non-compact and $QS$ compact for every $S\in\StrictS(X)$.
\end{proposition}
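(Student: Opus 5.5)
The plan is to read everything off the exact formula \eqref{eq:HI-exact} in Proposition~\ref{prop:HI-graph}. For every $S\in\StrictS(X)$ we have
\[
 \max\{\varepsilon\|I-S\|,\|Q-QS\|\}\ge\|Q-QS\|\ge\delta_Q .
\]
Taking the infimum over $S$ gives $d_+(J_{\varepsilon,Q})\ge\delta_Q$ for every $0<\varepsilon\le1$, while $\inn(J_{\varepsilon,Q})=\varepsilon$. Hence $d_+(J_{\varepsilon,Q})/\inn(J_{\varepsilon,Q})\ge\delta_Q/\varepsilon\to\infty$. Since each $J_{\varepsilon,Q}$ is the identity into an equivalent renorming, Theorem~\ref{thm:renorm} (or directly the definition of $\LSF$) yields $\LSF(X)=\infty$. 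Equivalently, the second half of Proposition~\ref{prop:HI-graph} is violated: no bounded sequence $S_n$ can satisfy $\|Q-QS_n\|\to0$.

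For the sufficient condition, assume $Q$ is non-compact and $QS$ is compact for every $S\in\StrictS(X)$. Then
\[
 \|Q-QS\|\ge\dist(Q,\mathcal K(X,Z))=:\|Q\|_e ,
\]
so it suffices to show $\|Q\|_e>0$. This holds because $\mathcal K(X,Z)$ is norm closed and $Q\notin\mathcal K(X,Z)$. Therefore $\delta_Q\ge\|Q\|_e>0$, and the first part applies.

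The only step needing care is that $Q$ is required to be a strictly singular contraction for \eqref{eq:HI-graph} and \eqref{eq:HI-exact} to apply. This is a standing hypothesis on $Q$, and rescaling a strictly singular $Q$ by a positive constant keeps both non-compactness and the compactness of the products $QS$. Otherwise the argument is a direct inequality, with no real obstacle.
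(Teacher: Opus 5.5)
Your proposal is correct and matches the paper's proof: the lower bound $d_+(J_{\varepsilon,Q})\ge\delta_Q$ is read off from \eqref{eq:HI-exact}, and the sufficient condition follows from $\delta_Q\ge\dist(Q,\mathcal K(X,Z))>0$ because $QS\in\mathcal K(X,Z)$ and the compact operators form a norm-closed ideal. You also write out the step the paper leaves implicit, namely that $\inn(J_{\varepsilon,Q})=\varepsilon$ makes the ratio at least $\delta_Q/\varepsilon\to\infty$.
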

\begin{proof}
The first conclusion is \eqref{eq:HI-exact}. In the sufficient case, $\delta_Q\ge\dist(Q,\mathcal K(X,Z))>0$, because the compact ideal is norm closed.
\end{proof}

\begin{theorem}\label{thm:HI-nilpotent}
Let $X$ be H.I. If every product of $m$ strictly singular endomorphisms is compact for some $m\ge1$, then $\LSF(X)=\infty$.
\end{theorem}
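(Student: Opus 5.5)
The plan is to use Proposition~\ref{prop:HI-separation}: I will produce a strictly singular contraction $Q$ that is not compact but satisfies $QS\in\mathcal K$ for every $S\in\StrictS(X)$. The natural candidate is a product of strictly singular operators of maximal non-compact length. Let $k\ge0$ be the largest integer such that some product $S_1\cdots S_k$ of $k$ strictly singular endomorphisms is non-compact, with $k=0$ meaning the empty product $I$. The identity is non-compact since $X$ is infinite-dimensional, and products of length $m$ are compact. Hence $0\le k\le m-1$ is well defined. Fix a non-compact product $P=S_1\cdots S_k$. By maximality, $PS$ is compact for every $S\in\StrictS(X)$.

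If $k\ge1$, put $Q=P/\|P\|$. This is a strictly singular contraction $X\to X$, because $\StrictS$ is an ideal. It is non-compact, and $QS$ is compact for all strictly singular $S$. Proposition~\ref{prop:HI-separation}, with $Z=X$, gives $\LSF(X)=\infty$.

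The case $k=0$ means that every strictly singular endomorphism is already compact, so $P=I$ is not strictly singular and cannot serve as $Q$. In this case I would take $Q=q_M$ for some closed $M$ of infinite dimension and codimension, which exists since $X$ is infinite-dimensional. By \eqref{eq:HI-distance}, $q_M$ is a strictly singular contraction, and it is non-compact because $X/M$ is infinite-dimensional. For $S\in\StrictS(X)=\mathcal K(X)$, the product $q_MS$ is compact. Proposition~\ref{prop:HI-separation} again yields $\LSF(X)=\infty$.

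The main obstacle is only bookkeeping. One must make sure the separation proposition applies with a target $Z$ other than $X$, which it does since $Q:X\to Z$ is arbitrary. One must also make sure the maximal-length choice handles $k=0$ separately, as above. No estimate beyond the norm-closedness of $\mathcal K$, already used in Proposition~\ref{prop:HI-separation}, is needed.
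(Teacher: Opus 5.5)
Your argument is correct and follows the paper's proof: the main case, taking a maximal non-compact product $S_1\cdots S_k$ normalized to a contraction and applying Proposition~\ref{prop:HI-separation} with $Z=X$, is exactly what the paper does. The only difference is the degenerate case $\StrictS(X)=\mathcal K(X)$. There the paper invokes Theorem~\ref{thm:HI-cosingular} (compact operators are strictly cosingular) and obtains the sharper value $d_+(J_{\varepsilon,M})=1$, whereas your direct use of Proposition~\ref{prop:HI-separation} with $Q=q_M$ gives only $d_+\ge\dist(q_M,\mathcal K(X,X/M))>0$, which is equally sufficient.
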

\begin{proof}
If $\StrictS(X)=\mathcal K(X)$, then $\StrictS(X)\subset\mathcal{SC}(X)$ and Theorem~\ref{thm:HI-cosingular} applies. Otherwise choose the largest length $k$ of a non-compact product. Such a $k$ exists and $1\le k\le m-1$. Choose $Q_0=S_1\cdots S_k$ non-compact. It is strictly singular, and $Q_0S$ is compact for every $S\in\StrictS(X)$, by maximality. Normalize $Q=Q_0/\|Q_0\|$ and apply Proposition~\ref{prop:HI-separation} with $Z=X$.
\end{proof}

The hypothesis says that $\StrictS(X)/\mathcal K(X)$ is nilpotent. The proof uses a non-zero element in its last non-zero layer, whose right products vanish. It does not require every strictly singular operator to be compact.

\begin{theorem}\label{thm:HI-reflexive-ideal}
If $X$ is H.I. and the Banach space $A=\StrictS(X)/\mathcal K(X)$ is reflexive, then $\LSF(X)=\infty$. In particular, this holds whenever the Calkin algebra $\cL(X)/\mathcal K(X)$ is finite-dimensional.
\end{theorem}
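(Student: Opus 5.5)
The plan is to argue by contradiction. Assume $\LSF(X)<\infty$, use reflexivity of $A$ to turn the bounded right approximate identity of Corollary~\ref{cor:HI-brai} into an actual right identity of $A$, and then feed that identity into the quotient approximation of Theorem~\ref{thm:HI-quotients} to force a quotient map to be compact.

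First I build the right identity. By Corollary~\ref{cor:HI-brai}, the quotient $A=\StrictS(X)/\mathcal K(X)$ has a bounded right approximate identity $(e_\alpha)$. Since $A$ is reflexive, the bounded net $(e_\alpha)$ has a weak cluster point $e\in A$; pass to a subnet converging weakly to $e$. For fixed $a\in A$, left multiplication $L_a:b\mapsto ab$ is a bounded linear operator on $A$, hence weak-to-weak continuous. So $ae_\alpha\to ae$ weakly. On the other hand $ae_\alpha\to a$ in norm, so $ae=a$. Choose a representative $P\in\StrictS(X)$ of $e$. Then $S(I-P)\in\mathcal K(X)$ for every $S\in\StrictS(X)$. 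The degenerate case $A=0$ is included, with $P=0$; there one could also simply invoke Theorem~\ref{thm:HI-nilpotent} with $m=1$.

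Next I bring in the quotient approximation. Fix a closed $M$ of infinite dimension and infinite codimension; such an $M$ exists since $X$ is infinite-dimensional. Since $\LSF(X)<\infty$, Theorem~\ref{thm:HI-quotients} gives $b_X(M)<\infty$. Hence there are $S_n\in\StrictS(X)$ with $\|q_M(I-S_n)\|\to0$; I only need the convergence, not the bound on $\|I-S_n\|$. Put $K_n=S_n(I-P)$, which is compact by the previous step. Multiplying on the right by $I-P$ gives
\[
 q_M(I-P)-q_MK_n=q_M(I-S_n)(I-P),
\]
whose norm is at most $\|q_M(I-S_n)\|\,\|I-P\|\to0$. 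Because $\mathcal K(X,X/M)$ is norm closed, $q_M(I-P)$ is compact.

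Finally I derive the contradiction. Since $P$ is strictly singular, $I-P$ is Fredholm by strictly singular perturbation stability \cite[Section IV.5]{Kat}. So there is $G\in\cL(X)$ with $(I-P)G=I-F$ for some finite-rank $F$. Then
\[
 q_M=q_M(I-P)G+q_MF
\]
is compact. But $q_M$ maps the open unit ball onto the open unit ball of the infinite-dimensional space $X/M$, so it is not compact. Hence $\LSF(X)=\infty$. For the last assertion, if $\cL(X)/\mathcal K(X)$ is finite-dimensional, then $A$ is a finite-dimensional subspace and is therefore reflexive.

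The step I expect to need the most care is the first one: extracting a genuine right identity from the approximate one. It relies on separate weak continuity of multiplication on the left, which holds for any Banach algebra. I must make sure the approximate identity is right-sided in the sense $ae_\alpha\to a$, which is exactly what Corollary~\ref{cor:HI-brai} provides through $\|A_j-A_jS\|\to0$.
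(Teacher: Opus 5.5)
Your proof is correct. Its first step, the weak cluster point $e$ of the bounded right approximate identity with $ae=a$ for all $a\in A$, is the same as the paper's. The two arguments diverge after that.

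The paper continues as follows:
\begin{itemize}
\item It notes that $e^2=e$.
\item By Atkinson's theorem, $1-e$ is invertible in the Calkin algebra, so $e(1-e)=0$ forces $e=0$.
\item Hence $A=0$, that is, $\StrictS(X)=\mathcal K(X)$.
\item The contradiction then comes from Theorem~\ref{thm:HI-cosingular}: compact operators are strictly cosingular, so $\|q_M-q_MS\|\ge1$ for every strictly singular $S$.
\end{itemize}

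You instead keep the representative $P$ of the right identity. You take the approximating sequence $S_n$ from Theorem~\ref{thm:HI-quotients} and show that $q_M(I-P)$ is a norm limit of the compact operators $q_MS_n(I-P)$. You then undo $I-P$ with a Fredholm regularizer $G$ to conclude that $q_M$ itself is compact.

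Both routes rest on the same Fredholm/Atkinson input. Yours avoids the adjoint-surjectivity argument behind Theorem~\ref{thm:HI-cosingular} and needs only that the compact operators are norm closed and that $q_M$ is not compact. The paper's route isolates the structural fact $\StrictS(X)=\mathcal K(X)$ as an intermediate step.

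Your regularizer trick gives that same fact in one line. For any $S\in\StrictS(X)$ you have $S=S(I-P)G+SF$, with $S(I-P)$ compact and $F$ of finite rank, so $S$ is compact. The two proofs are therefore very close.
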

\begin{proof}
Suppose $\LSF(X)<\infty$. By Corollary~\ref{cor:HI-brai}, $A$ has a bounded right approximate identity $(e_\alpha)$. Reflexivity gives a weakly convergent subnet with limit $e\in A$. For each $a\in A$, bounded linearity of left multiplication and $ae_\alpha\to a$ in norm give $ae=a$. In particular $e^2=e$.

Choose $S\in\StrictS(X)$ representing $e$. The operator $I-S$ is Fredholm, so $1-e$ is invertible in the full Calkin algebra by Atkinson's theorem. But $e(1-e)=0$, hence $e=0$. Since $ae=a$ for all $a$, this forces $A=0$ and $\StrictS(X)=\mathcal K(X)$. Theorem~\ref{thm:HI-cosingular} then contradicts finiteness. The finite-dimensional assertion follows because $A$ is a closed subspace of the Calkin algebra.
\end{proof}

Here reflexivity refers to the quotient ideal, not to the domain. A hypothetical H.I. domain with finite $\LSF$ must therefore have a non-reflexive, non-nilpotent quotient ideal admitting a bounded right approximate identity.

\begin{corollary}\label{cor:HI-examples}
The constant $\LSF$ is infinite on the Argyros--Haydon space, on every closed infinite-dimensional subspace of the real Argyros--Motakis space $\mathfrak X_{\mathrm{ISP}}$, and on every closed infinite-dimensional subspace of the real H.I. spaces constructed in \cite{AM16}.
\end{corollary}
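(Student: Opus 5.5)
The plan is to deduce each case from Theorems~\ref{thm:HI-cosingular}, \ref{thm:HI-nilpotent} and \ref{thm:HI-reflexive-ideal}, using the known structure of the strictly singular ideal in each space. The main work is to check that the relevant property holds on \emph{every} closed infinite-dimensional subspace, not only on the whole space.

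\emph{Argyros--Haydon space.} The space $\mathfrak X_{\mathrm{AH}}$ is H.I. and has the scalar-plus-compact property \cite{AH11}. Hence $\cL(X)/\mathcal K(X)$ is one-dimensional. Theorem~\ref{thm:HI-reflexive-ideal} then gives $\LSF=\infty$. Equivalently, $\StrictS(X)=\mathcal K(X)$, and Theorem~\ref{thm:HI-nilpotent} with $m=1$ applies.

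\emph{Subspaces of $\mathfrak X_{\mathrm{ISP}}$.} Argyros and Motakis \cite{AM14} show that $\mathfrak X_{\mathrm{ISP}}$ is H.I. They also show that for every closed infinite-dimensional subspace $Y$ and every $S,T\in\StrictS(Y)$, the product $ST$ is compact. In addition, every $T\in\cL(Y)$ has the form $\lambda I+S$ with $S$ strictly singular, although over $\mathbb R$ only the product property is needed. Since a closed infinite-dimensional subspace of an H.I. space is H.I., Theorem~\ref{thm:HI-nilpotent} with $m=2$ gives $\LSF(Y)=\infty$ for each such $Y$. One step still has to be checked: that the cited result is stated for strictly singular operators on $Y$ itself, and not merely for restrictions of operators on $\mathfrak X_{\mathrm{ISP}}$. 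I expect \cite{AM14} to state exactly this invariant-subspace form, the "ISP" property. If it only gives $\StrictS(Y,\mathfrak X_{\mathrm{ISP}})$, I would compose with the inclusion $Y\to\mathfrak X_{\mathrm{ISP}}$. Compactness of a product into the larger space implies compactness into $Y$, because $Y$ is isometrically embedded.

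\emph{Spaces of \cite{AM16}.} These real H.I. spaces satisfy, for a given $m$, that every product of $m$ strictly singular operators on any closed infinite-dimensional subspace is compact; this is their defining feature. Theorem~\ref{thm:HI-nilpotent} applies in the same way to each subspace.

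The main obstacle is bibliographic rather than analytic. It consists in confirming that the hereditary statements, on all subspaces and for operators on the subspace, are exactly what \cite{AM14,AM16} prove. I would also invoke \cite{KL17} to recall that the Argyros--Haydon space is H.I. with $\StrictS=\mathcal K$. Everything else is a direct application of the earlier theorems, since H.I. passes to subspaces.
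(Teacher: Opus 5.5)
Your route is the paper's. Each case is reduced to the H.I. obstruction theorems through the cited structure of the strictly singular ideal. For Argyros--Haydon, the paper goes directly to Theorem~\ref{thm:HI-cosingular} via $\StrictS(X)=\mathcal K(X)\subset\mathcal{SC}(X)$, which is where both of your routes (Theorem~\ref{thm:HI-reflexive-ideal}, or Theorem~\ref{thm:HI-nilpotent} with $m=1$) end up.

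\textbf{The value of $m$ for $\mathfrak X_{\mathrm{ISP}}$.} You have misquoted \cite{AM14}. What is proved there is that products of \emph{three} strictly singular operators on each closed infinite-dimensional subspace of $\mathfrak X_{\mathrm{ISP}}$ are compact; the paper cites Theorem~5.19 and uses $m=3$. The two-fold product property belongs to the spaces of \cite{AM16}, where the paper uses $m=2$. So your $m=2$ claim for $\mathfrak X_{\mathrm{ISP}}$ attributes more to the source than it proves. The conclusion is unaffected, because Theorem~\ref{thm:HI-nilpotent} accepts any $m$.

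\textbf{Your fallback.} Composing with the inclusion $Y\to\mathfrak X_{\mathrm{ISP}}$ would not work as written. Operators $Y\to\mathfrak X_{\mathrm{ISP}}$ cannot be composed with one another. Moreover, a result stated only for endomorphisms of $\mathfrak X_{\mathrm{ISP}}$ would not transfer, since operators on $Y$ need not extend to the whole space. The fallback is unnecessary, however, because \cite{AM14} states the property for operators on each subspace $Y$ itself.
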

\begin{proof}
The scalar-plus-compact property of Argyros--Haydon \cite{AH11}, in either scalar field \cite{KL17}, gives $\StrictS(X)=\mathcal K(X)$: a non-zero scalar multiple of $I$ plus a compact operator is Fredholm and cannot be strictly singular. Apply Theorem~\ref{thm:HI-cosingular}.

For $\mathfrak X_{\mathrm{ISP}}$, Argyros--Motakis show that every infinite-dimensional closed subspace is H.I. and every product of three strictly singular endomorphisms on it is compact \cite[Introduction, theorem, (i); Theorem 5.19]{AM14}. Apply Theorem~\ref{thm:HI-nilpotent} with $m=3$. Their strictly singular ideals contain non-compact operators, so this case is distinct from scalar-plus-compact. The real H.I. constructions in \cite[Theorems 6.13 and 8.4]{AM16} have compact products of two strictly singular operators, also on their closed subspaces; use $m=2$.
\end{proof}

Proposition~\ref{prop:complemented-inheritance} also gives $\LSF(X)=\infty$ whenever $X$ contains a complemented subspace isomorphic to any of the spaces just excluded.

\subsection{A dual formula on complex H.I. spaces}
Let $X$ now be complex H.I. Put $B=\cL(X)$ and let $\varphi(T)=\lambda$ in the scalar-plus-strictly-singular representation. Corollary~\ref{cor:HI-endomorphisms} gives $\|\varphi\|=1$ and $\ker\varphi=\StrictS(X)$. For $M\in\sub(X)$, set
\[
 E_M=\cL(X,X/M),\quad F_M:B\to E_M,\quad F_M(A)=q_MA,
\]
\begin{equation}\label{eq:HI-eta}
 D_M=\overline{F_M^*(E_M^*)}^{\|\cdot\|},
 \qquad \eta_X(M)=\dist(\varphi,D_M).
\end{equation}

\begin{theorem}\label{thm:HI-dual}
With $1/0=\infty$,
\[
 b_X(M)=\frac1{\eta_X(M)},
 \qquad \LSF(X)=\frac1{\inf_{M\in\sub(X)}\eta_X(M)}.
\]
\end{theorem}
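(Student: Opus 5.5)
The plan is to recast $b_X(M)$ as a family of linear extremal problems on $B=\cL(X)$ and compute their values by Hahn--Banach duality for a maximum of two seminorms. Once $b_X(M)=1/\eta_X(M)$ is established, the formula for $\LSF(X)$ follows from Theorem~\ref{thm:HI-quotients}, using $\sup_M 1/\eta_X(M)=1/\inf_M\eta_X(M)$ with $1/0=\infty$.

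\emph{Step 1: reformulation.} By Corollary~\ref{cor:HI-endomorphisms}, $\ker\varphi=\StrictS(X)$. So $R=I-S$ with $S\in\StrictS(X)$ is the same as $R\in B$ with $\varphi(R)=1$. Then
\[
 b_X(M)=\inf\Bigl\{\limsup_n\|R_n\|:\ \varphi(R_n)=1,\ \|F_MR_n\|\to0\Bigr\}.
\]
For $\delta>0$ define
\[
 \alpha(\delta)=\sup\{\operatorname{Re}\varphi(R):\ \|R\|\le1,\ \|F_MR\|\le\delta\}.
\]
This is nonincreasing in $\delta$, so $\alpha_0=\lim_{\delta\downarrow0}\alpha(\delta)$ exists. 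I will show $b_X(M)=1/\alpha_0$ by rescaling.

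\begin{itemize}
\item Given an admissible sequence with $\limsup\|R_n\|<\beta$, the operators $R_n/\|R_n\|$ satisfy $\|F_M(R_n/\|R_n\|)\|\to0$ and $\varphi$ takes values of size at least about $1/\beta$ on them. Hence $\alpha(\delta)\ge1/\beta$ for every $\delta$.
\item Conversely, if $\alpha_0>a>0$, pick $R$ with $\|R\|\le1$, $\|F_MR\|\le\delta_n\to0$ and $\operatorname{Re}\varphi(R)>a$. Normalize to $R/\varphi(R)$. This gives $\|R_n\|<1/a$ and $\|F_MR_n\|\le\delta_n/a\to0$.
\end{itemize}
The case $\alpha_0=0$ corresponds to $b_X(M)=\infty$.

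\emph{Step 2: duality at fixed $\delta$.} The quantity $N_\delta(R)=\max\{\|R\|,\delta^{-1}\|F_MR\|\}$ is an equivalent norm on $B$. By definition, $\alpha(\delta)$ is the dual $N_\delta$-norm of $\varphi$; taking real parts handles the complex case. I will prove
\[
 \alpha(\delta)=\inf_{g\in E_M^*}\bigl(\|\varphi-F_M^*g\|+\delta\|g\|\bigr).
\]
The inequality $\le$ is immediate: write $\varphi(R)=(\varphi-F_M^*g)(R)+g(F_MR)$ and estimate each term.

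For $\ge$, I expect this to be the main technical point. Consider the map $R\mapsto(R,F_MR)$ from $B$ into $B\oplus_\infty(E_M,\delta^{-1}\|\cdot\|)$. It is an isometric embedding for $N_\delta$. Extend $\varphi\circ(\text{this map})^{-1}$ by Hahn--Banach to the sum, preserving its norm. A functional on an $\ell_\infty$-sum is a pair $(\psi,g)$ whose norm is $\|\psi\|+\delta\|g\|$. Restricting back gives $\varphi=\psi+F_M^*g$, which yields the reverse inequality, with the infimum even attained.

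\emph{Step 3: the limit $\delta\downarrow0$.} Set $h(\delta)=\inf_g(\|\varphi-F_M^*g\|+\delta\|g\|)$. Clearly $h(\delta)\ge\dist(\varphi,F_M^*(E_M^*))=\eta_X(M)$; the distance to the range equals the distance to its norm closure $D_M$.

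Conversely, for $\gamma>0$ fix $g$ with $\|\varphi-F_M^*g\|<\eta_X(M)+\gamma$. Then $h(\delta)\le\eta_X(M)+\gamma+\delta\|g\|$, so $\limsup h(\delta)\le\eta_X(M)+\gamma$. Hence $\alpha_0=\eta_X(M)$, and Step 1 gives $b_X(M)=1/\eta_X(M)$, including the value $\infty$.

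Note that the \emph{norm} closure enters precisely because each $g$ is fixed before $\delta\to0$. A weak$^*$ closure would instead give $(\ker F_M)^\perp$ and the wrong quantity. This is the point to be careful about.
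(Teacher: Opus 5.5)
Your proof is correct. For the nontrivial inequality $b_X(M)\le 1/\eta_X(M)$ it takes a different route from the paper.

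There is one wording slip. $\alpha(\delta)$ is nondecreasing in $\delta$, not nonincreasing, because the constraint set grows with $\delta$. Your second bullet in fact uses the correct direction, namely $\alpha(\delta_n)\ge\alpha_0$.

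The lower bound $b_X(M)\ge 1/\eta_X(M)$ is essentially the paper's argument. Both use the splitting $\varphi(R)=(\varphi-F_M^*g)(R)+g(F_MR)$ with $g$ fixed before the limit is taken.

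The routes differ in the upper bound.
\begin{itemize}
\item The paper works directly at $\delta=0$. Hahn--Banach separation of $\varphi$ from $D_M$ gives $v\in B^{**}$ with $F_M^{**}v=0$, $v(\varphi)=1$ and $\|v\|=1/\eta_X(M)$. It then returns to $B$ by Goldstine's theorem and Mazur's theorem, taking convex combinations of a weak$^*$-convergent net.
\item You stay inside $B$ and $B^*$. At each fixed $\delta>0$ you compute the dual $N_\delta$-norm of $\varphi$ by a norm-preserving extension from the graph of $F_M$ in $B\oplus_\infty(E_M,\delta^{-1}\|\cdot\|)$. This gives the exact identity $\alpha(\delta)=\min_g(\|\varphi-F_M^*g\|+\delta\|g\|)$, and you then let $\delta\downarrow0$ by an elementary argument.
\end{itemize}

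Your route avoids the bidual and the convex-combination step. It gives a quantitative trade-off between $\|R\|$ and $\|F_MR\|$ at every scale, with the dual minimum attained. It also shows clearly why the norm closure $D_M$, and not the weak$^*$ closure, enters: each $g$ is fixed before $\delta\to0$.

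The paper's route is shorter and produces a single extremal object in $B^{**}$. It implicitly identifies $b_X(M)$ with the least norm of a $v\in B^{**}$ satisfying $F_M^{**}v=0$ and $v(\varphi)=1$.
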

\begin{proof}
The operators $R=I-S$ with $S$ strictly singular are exactly those satisfying $\varphi(R)=1$. For a bounded admissible sequence $R_n$ and $f\in E_M^*$,
\[
 1\le\|\varphi-F_M^*f\|\|R_n\|+\|f\|\|F_MR_n\|.
\]
Taking the limit and then the infimum over $f$ gives $1\le\eta_X(M)\limsup\|R_n\|$. This proves the lower bound and the assertion when $\eta_X(M)=0$.

If $\eta=\eta_X(M)>0$, Hahn--Banach gives $u\in B^{**}$ with $\|u\|=1$, $u|_{D_M}=0$ and $u(\varphi)=\eta$. Set $v=u/\eta$. Then $F_M^{**}v=0$ and $v(\varphi)=1$. Goldstine gives a net $A_\alpha$ in the ball of $B$ of radius $1/\eta$, converging weak-star to $v$. The pairs $(F_MA_\alpha,\varphi(A_\alpha)-1)$ converge weakly to zero in $E_M\oplus\mathbb C$. By equality of weak and norm closures for convex sets, choose finite convex combinations $B_n$ with
\[
 \|B_n\|\le1/\eta,\qquad \|F_MB_n\|\to0,\qquad \varphi(B_n)\to1.
\]
Normalize $R_n=B_n/\varphi(B_n)$ for large $n$. These operators are admissible and $\limsup\|R_n\|\le1/\eta$. This proves the first equality; Theorem~\ref{thm:HI-quotients} gives the second.
\end{proof}

The closure in \eqref{eq:HI-eta} is a norm closure. For infinite-codimensional $M$, every endomorphism with range in $M$ is strictly singular: otherwise it would be upper semi-Fredholm by \eqref{eq:HI-distance}, hence Fredholm by the scalar-plus-strictly-singular representation, which its range excludes. Thus $\varphi$ annihilates $\ker F_M$ and belongs to the weak-star closure of $F_M^*(E_M^*)$. The distance to the norm closure $D_M$ is the quantity entering the comparison constant.

\begin{remark}\label{rem:HI-distortion}
Tomczak--Jaegermann proved that every H.I. space is arbitrarily distortable \cite[Corollary 3]{TJ96}. For an equivalent norm, its distortion is
\[
 \Delta_X(|\cdot|)=\inf_{M\in\sub(X)}
 \frac{\sup_{x\in M,\ \|x\|=1}|x|}{\inf_{x\in M,\ \|x\|=1}|x|}.
\]
Large distortion does not itself estimate the extension cost in Proposition~\ref{prop:extension}. Conversely, every graph norm \eqref{eq:HI-graph} has $\Delta_X(|\cdot|_{\varepsilon,Q})=1$: on an infinite-dimensional subspace where $\|Q\|<\varepsilon$, it equals $\varepsilon\|\cdot\|$. Hence the exact divergence $d_+/\inn=1/\varepsilon$ in Theorem~\ref{thm:HI-cosingular} occurs with distortion one. Persistent distortion is not necessary for this obstruction. Martin\'on \cite{Mar10} studies the relation between distortion and supermultiplicative operational quantities; the quotient argument above uses the surjectivity properties of $q_MS$.
\end{remark}

\section{Further questions}\label{sec:questions}
The results settle the comparison on $\mathcal L_1$ and exclude several H.I. classes, including the original Gowers--Maurey space. They leave Problem~\ref{prob:converse} open for general domains. Failure of the hypothesis of Theorem~\ref{thm:conditional} can reflect absence of witnesses, failure of lifting, or lack of a common internal bound $K$; it need not give a single subspace containing no witness at all.

\begin{problem}\label{prob:HI}
Is $\sup_{M\in\sub(X)}b_X(M)=\infty$ for every H.I. space $X$? Equivalently, is $\LSF(X)=\infty$ for every such domain? A finite value would require the quotient-surjectivity phenomenon in Corollary~\ref{cor:HI-nonSC} and a bounded right approximate identity in the non-reflexive, non-nilpotent quotient ideal $\StrictS(X)/\mathcal K(X)$. Over $\mathbb C$, it would require $\inf_M\eta_X(M)>0$ in Theorem~\ref{thm:HI-dual}. More generally, is $\LSF(X)=\infty$ whenever $X$ has an infinite-dimensional subspace containing no uniformly subprojective subspace?
\end{problem}

\begin{problem}\label{prob:Lp}
Is $\LSF(L_p(0,1))$ finite for $1<p<2$? These spaces are not subprojective \cite{OiSp15}, but the lifting principle used in Section~\ref{sec:converse} is specific to $\mathcal L_1$-spaces. For $2\le p<\infty$, $L_p(0,1)$ is uniformly subprojective \cite[Remark 2.11]{OiSp15}; determine $\LSF(L_p(0,1))$, which equals $1$ for $p=2$.
\end{problem}

\begin{problem}\label{prob:Linfty}
Determine the $\mathcal L_\infty$-spaces with $\LSF(X)<\infty$. For instance, is $\LSF(C[0,1])=\infty$?
\end{problem}

\subsection{The lower semi-Fredholm side}

For the lower semi-Fredholm side, with $\inn_-(T)=\inf_U\|Q_UT\|$ over closed infinite-codimensional $U\subset Y$ and $d_-(T)=\dist(T,\cL(X,Y)\setminus\Phi_-(X,Y))$, the inequality $\inn_-\le d_-$, the failure of equivalence, and the Hilbert-target case are in \cite[Section 3]{GM12}. The natural counterpart of Theorem~\ref{thm:L1} would concern a fixed target and uniform superprojectivity, with $\mathcal L_\infty$-spaces in place of $\mathcal L_1$-spaces and extension in place of lifting.

\begin{problem}
For which infinite-dimensional Banach spaces $Y$ is there a finite constant $C_Y$ such that $d_-^{X,Y}(T)\le C_Y\inn_-(T)$ for all domains $X$ and all $T:X\to Y$? Does a corresponding geometric characterization hold for $\mathcal L_\infty$ targets?
\end{problem}

\section{Conclusion}

The extension formula identifies the geometric issue behind the comparison of $\inn$ with the upper semi-Fredholm radius. A small restriction bounds the radius when it admits an extension of controlled norm. Uniform subprojectivity provides these extensions through projections. The lifting argument recovers projections in the reverse direction, from identities into quotient renormings, even when the witness is uncomplemented. On $\mathcal L_1$-spaces it yields both a characterization and equality of the optimal constants: reflexive subspaces force an infinite comparison constant, while their absence gives almost isometric $\ell_1$ witnesses and uniform projections.

On H.I. domains, quotient maps detect an obstruction that does not depend on uniformly subprojective witnesses. Cosingularity forces their distance from compositions $q_MS$ to be at least one, giving the exact radius $d_+=1$ while $\inn=\varepsilon$. This excludes the original Gowers--Maurey space and the other duality examples considered here as domains with finite comparison constant. Compact products and reflexive quotient ideals extend the conclusion to further H.I. constructions. The inheritance result carries each obstruction to spaces containing these examples as complemented subspaces.

For arbitrary H.I. spaces, the quotient characterization leaves a concrete problem: can the maps $q_M$ be approximated by $q_MS_n$ with one common bound on $\|I-S_n\|$? In the complex case the dual distance formula expresses exactly the bound required. Resolving this approximation problem would settle the comparison on all H.I. domains. The general converse, the case $L_p(0,1)$ for $1<p<2$, and the fixed-target lower semi-Fredholm problem remain open. The Gaussian renormings on $L_1(0,1)$ also leave a quantitative question: their square-root lower bound is explicit, but a matching asymptotic upper bound has yet to be found.

\section*{Acknowledgements}
The author thanks Manuel Gonz\'alez for comments on an earlier formulation of the comparison problem and for drawing attention to the distortion of H.I. spaces and the work of Tomczak--Jaegermann.

\section*{Declarations}
\textbf{Funding.} The author received no specific funding for this work.\\
\textbf{Competing interests.} The author declares no competing interests.\\
\textbf{Data availability.} No data sets were generated or analysed.\\
\textbf{Use of generative AI.} Generative AI tools assisted with drafting and preliminary mathematical checks. The author is responsible for the content and its final verification.


\begin{thebibliography}{99}\small
\bibitem{CR23} D.~Y.~Chen, Y.~B.~Ruan, Quantitative characterizations of bounded compact approximation property by semi-Fredholm operators, \emph{Acta Math. Sin. (Engl. Ser.)} 39 (2023), no.~6, 1115--1128.
\bibitem{AH11} S.~A.~Argyros, R.~G.~Haydon, A hereditarily indecomposable $\mathcal L_\infty$-space that solves the scalar-plus-compact problem, \emph{Acta Math.} 206 (2011), 1--54.
\bibitem{AM14} S.~A.~Argyros, P.~Motakis, A reflexive hereditarily indecomposable space with the hereditary invariant subspace property, \emph{Proc. London Math. Soc.} 108 (2014), 1381--1416.
\bibitem{AM16} S.~A.~Argyros, P.~Motakis, A dual method of constructing hereditarily indecomposable Banach spaces, \emph{Positivity} 20 (2016), 625--662.
\bibitem{Fer99} V.~Ferenczi, Quotient hereditarily indecomposable Banach spaces, \emph{Canad. J. Math.} 51 (1999), no.~3, 566--584.
\bibitem{G07} M.~Gonz\'alez, Banach spaces with small Calkin algebras, \emph{Banach Center Publ.} 75 (2007), 159--170.
\bibitem{GoMa93} W.~T.~Gowers, B.~Maurey, The unconditional basic sequence problem, \emph{J.\ Amer.\ Math.\ Soc.} 6 (1993), no.~4, 851--874.
\bibitem{KL17} T.~Kania, N.~J.~Laustsen, Ideal structure of the algebra of bounded operators acting on a Banach space, \emph{Indiana Univ. Math. J.} 66 (2017), 1019--1043.
\bibitem{Mar10} A.~Martin\'on, Distortion of Banach spaces and supermultiplicative operational quantities, \emph{J. Math. Anal. Appl.} 363 (2010), 655--662.
\bibitem{TJ96} N.~Tomczak-Jaegermann, Banach spaces of type $p$ have arbitrarily distortable subspaces, \emph{Geom.\ Funct.\ Anal.} 6 (1996), no.~6, 1074--1082.
\bibitem{AK} F.~Albiac, N.~J.~Kalton, \emph{Topics in Banach Space Theory}, 2nd ed., Graduate Texts in Mathematics 233, Springer, Cham, 2016.
\bibitem{Ald81} D.~J.~Aldous, Subspaces of $L^1$, via random measures, \emph{Trans.\ Amer.\ Math.\ Soc.} 267 (1981), no.~2, 445--463.
\bibitem{AAG92} J.~A.~\'Alvarez, T.~\'Alvarez, M.~Gonz\'alez, The gap between subspaces and perturbation of non-semi-Fredholm operators, \emph{Bull.\ Austral.\ Math.\ Soc.} 45 (1992), no.~3, 369--376.
\bibitem{Dor75} L.~E.~Dor, On projections in $L_1$, \emph{Ann.\ of Math.\ (2)} 102 (1975), no.~3, 463--474.
\bibitem{Die80} J.~Diestel, A survey of results related to the Dunford--Pettis property, \emph{Contemp.\ Math.} 2 (1980), 15--60.
\bibitem{GM95} M.~Gonz\'alez, A.~Martin\'on, Operational quantities characterizing semi-Fredholm operators, \emph{Studia Math.} 114 (1995), no.~1, 13--27. DOI: 10.4064/sm-114-1-13-27.
\bibitem{GM12} M.~Gonz\'alez, A.~Martin\'on, Quantities characterizing semi-Fredholm operators and perturbation radii, \emph{J.\ Math.\ Anal.\ Appl.} 390 (2012), no.~1, 362--367. DOI: 10.1016/j.jmaa.2012.01.048.
\bibitem{GO86} M.~Gonz\'alez, V.~M.~Onieva, On the instability of non-semi-Fredholm operators under compact perturbations, \emph{J.\ Math.\ Anal.\ Appl.} 114 (1986), no.~2, 450--457.
\bibitem{GLR73} Y.~Gordon, D.~R.~Lewis, J.~R.~Retherford, Banach ideals of operators with applications, \emph{J.\ Funct.\ Anal.} 14 (1973), 85--129. DOI: 10.1016/0022-1236(73)90031-1.
\bibitem{Jam64} R.~C.~James, Uniformly non-square Banach spaces, \emph{Ann.\ of Math.\ (2)} 80 (1964), 542--550.
\bibitem{Kak41} S.~Kakutani, Concrete representation of abstract $(L)$-spaces and the mean ergodic theorem, \emph{Ann. of Math. (2)} 42 (1941), no.~2, 523--537.
\bibitem{KP97} N.~J.~Kalton, A.~Pe\l czy\'nski, Kernels of surjections from $\mathcal L_1$-spaces with an application to Sidon sets, \emph{Math.\ Ann.} 309 (1997), no.~1, 135--158.
\bibitem{Kat} T.~Kato, \emph{Perturbation Theory for Linear Operators}, Grundlehren der mathematischen Wissenschaften 132, Springer, Berlin, 1966.
\bibitem{Kri80} J.~L.~Krivine, Plongement de $\ell^p$ dans certains espaces de Banach, \emph{S\'eminaire d'analyse fonctionnelle, \'Ecole Polytechnique} (1979--1980), expos\'e 12, 1--5. \url{https://www.numdam.org/item/SAF_1979-1980____A10_0/}.
\bibitem{Lin64} J.~Lindenstrauss, On a certain subspace of $\ell_1$, \emph{Bull.\ Acad.\ Polon.\ Sci.\ S\'er.\ Sci.\ Math.\ Astronom.\ Phys.} 12 (1964), 539--542.
\bibitem{Mau80} B.~Maurey, Tout sous-espace de $L^1$ contient un $\ell_p$, d'apr\`es D.~Aldous, \emph{S\'eminaire d'analyse fonctionnelle, \'Ecole Polytechnique} (1979--1980), expos\'es 1--2, 1--13. \url{https://www.numdam.org/item/SAF_1979-1980____A1_0/}.
\bibitem{Mau75} B.~Maurey, Projections dans $L^1$, d'apr\`es L.~Dor, \emph{S\'eminaire Maurey--Schwartz, \'Ecole Polytechnique} (1974--1975), expos\'e 21, 1--9. \url{https://www.numdam.org/item/SAF_1974-1975____A20_0/}.
\bibitem{OiSp15} T.~Oikhberg, E.~Spinu, Subprojective Banach spaces, \emph{J.\ Math.\ Anal.\ Appl.} 424 (2015), no.~1, 613--635. DOI: 10.1016/j.jmaa.2014.11.008.
\bibitem{Ros74} H.~P.~Rosenthal, A characterization of Banach spaces containing $\ell^1$, \emph{Proc.\ Nat.\ Acad.\ Sci.\ U.S.A.} 71 (1974), 2411--2413.
\bibitem{Sch72} M.~Schechter, Quantities related to strictly singular operators, \emph{Indiana Univ.\ Math.\ J.} 21 (1972), no.~11, 1061--1071.
\bibitem{Whi64} R.~J.~Whitley, Strictly singular operators and their conjugates, \emph{Trans.\ Amer.\ Math.\ Soc.} 113 (1964), 252--261.
\bibitem{Zem84} J.~Zem\'anek, Geometric characteristic of semi-Fredholm operators and their asymptotic behaviour, \emph{Studia Math.} 80 (1984), no.~3, 219--234.
\bibitem{Zem85} J.~Zem\'anek, The stability radius of a semi-Fredholm operator, \emph{Integral Equations Operator Theory} 8 (1985), 137--144.
\end{thebibliography}
\end{document}